\documentclass[12pt]{article}
\usepackage{times}
\usepackage{amsfonts}
\usepackage{amssymb}
\usepackage{amsmath}
\usepackage{amsthm}
\usepackage{algpseudocode}
\usepackage{algorithm}
\usepackage{color}
\usepackage{enumitem}
\usepackage{graphicx}
\usepackage{cite}
\usepackage{enumitem}
\usepackage{authblk}
\usepackage{mathtools}

\newtheorem{theorem}{Theorem}
\newtheorem{lemma}[theorem]{Lemma}
\newtheorem{proposition}[theorem]{Proposition}

\newtheorem{definition}[theorem]{Definition}

\newcommand{\real}{{\mathbb R}}

\newcommand{\realinf}{\real \cup \{+\infty\}}

\newcommand{\nat}{\mathbb{N}}

\newcommand{\inner}[2]{\left\langle#1,#2\right\rangle}
\newcommand{\biginner}[2]{\big\langle{#1},{#2}\big\rangle}

\newcommand{\toset}{\rightrightarrows}

\DeclareMathOperator{\dom}{dom}
\DeclareMathOperator{\ri}{ri}

\DeclareMathOperator{\diag}{diag}

\DeclareMathOperator{\conv}{conv}

\DeclareMathOperator*{\argmin}{arg\,min}

\DeclareMathOperator{\suchthat}{ST}

\newcommand{\set}[2]{\left\{#1 \; \left|\;\; #2 \right.\right\}}

\newcommand{\norm}[1]{\left\|#1\right\|}
\newcommand{\smallnorm}[1]{\lVert{#1}\rVert}

\DeclareMathOperator{\proj}{proj}
\newcommand{\orthog}{^{\perp}}
\newcommand{\conjspace}[1]{^{\perp_{#1}}}
\newcommand{\half}{\tfrac{1}{2}}

\DeclareMathOperator{\dist}{dist}
\DeclareMathOperator{\prox}{prox}

\DeclareMathOperator{\graph}{graph}

\newcommand{\transpose}{^{\scriptscriptstyle \top}}
\newcommand{\invtrans}{^{-\scriptscriptstyle\top}}

\newcommand{\dotbar}[1]{\dot{\bar{#1}}}

\newcommand{\spcdot}{\,\cdot\,}

\DeclareMathOperator{\im}{im}
\DeclareMathOperator{\identity}{Id}

\DeclareMathOperator{\hyphenprox}{-prox}
\newcommand{\sprox}[1]{{#1}\!\hyphenprox}

\DeclareMathOperator{\hyphenproj}{-proj}
\newcommand{\sproj}[1]{{#1}\!\hyphenproj}

\DeclareMathOperator{\hyphendist}{-dist}
\newcommand{\sdist}[1]{{#1}\!\hyphendist}

\mathchardef\mhyphen="2D

\newcommand{\approxmin}[2]{{#1}\mhyphen\!\argmin_{#2}\,}

\newcommand{\Rho}{\mathrm{P}}

\DeclareMathOperator{\last}{last}

\makeatletter
\newcommand{\svdots}{%
  \vbox{\fontsize{\sf@size}{\sf@size pt}\linespread{0.3}\selectfont
    \kern0.2\baselineskip
    \hbox{.}\hbox{.}\hbox{.}%
    \kern0.1\baselineskip
  }%
}
\makeatother

\newcommand{\LongState}[1]{\State \parbox[t]{\dimexpr\linewidth-\algorithmicindent}{#1\strut}}

\begin{document}

\title{Using Subproblem Objective Gaps in Inexact Augmented
       Lagrangian and ADMM Algorithms, with Applications to
       Stochastic Mixed Integer Programming\thanks{This work was funded in part by the
       U.S.~Office of Naval Research grant N00014-24-1-2403.}}
\author[1]{Jonathan Eckstein}
\affil[1]{\small Department of Management Science and Information Systems,
          Rutgers Business School Newark and New Brunswick,
          Rutgers University}
\date{\today}

\maketitle

\vspace{-3ex}

\begin{abstract}
Through a ``partial strong convexity'' lemma, this paper shows how bounds on
subproblem objective value suboptimality can be used in inexact augmented
Lagrangian methods and ADMM algorithms.  The ADMM result uses a small but
important refinement on a long-standing criterion for approximately solving
subproblems. The results enable two new approaches to computing Lagrangian
bounds on the optimal values of stochastic mixed-integer programming problems,
with simpler convergence analysis than the prior state of the art. In each
case, the subproblems are solved by variants of the classical
Frank-Wolfe algorithm.  However, as compared to prior methods of the same
type, there is much more freedom in the choice of Frank-Wolfe variant.
\end{abstract}

\section{Introduction}
\label{sec:introduction}
This paper establishes that, for both the augmented Lagrangian method (ALM)
and the alternating direction method of multipliers (ADMM), it is possible to
use an objective-value tolerance criterion to accept inexact subproblem
solutions.  While such a criterion cannot always provide a bound on the
distance to the exact subproblem solution as specified in long-standing
results such as~\cite{EckBer92}, it makes it possible, through the ``partial
strong convexity'' lemma in Section~\ref{sec:partialStrong} below, to bound
the error in the multiplier update, which is sufficient to prove standard
convergence results if the subproblem objective errors are appropriately
controlled. Sections~\ref{sec:alms} and \ref{sec:admms} respectively prove
convergence of versions of the classical ALM and ADMM that use objective
tolerances for their subproblems, in the ADMM case using a small but critical
sharpening of the subproblem approximation criterion first proposed
in~\cite{EckBer92}.  To obtain convergence of the new ALM and ADMM variants,
the square roots of the subproblem objective errors should form a summable
sequence.

The ALM results may be viewed as significantly generalizing the ``$(A'')$''
inexact subproblem criterion for inequality-constrained problems analyzed by
Rockafellar in the seminal paper~\cite{Roc76b}.  For the ADMM, the results
appear to be new.

A practical application of the objective-gap-based approximation results
consists of ALM and ADMM methods that
solve their subproblems with variants of the Frank-Wolfe (FW) method, also
known as the conditional gradient method, since FW algorithms commonly provide
bounds on the difference between the objective value of the current iterate
and the optimal solution (and rarely any other measures of their progress
toward optimality).  The principal current example of such a combination of
algorithms is for computing Lagrangian bounds on mixed-integer stochastic
programming problems, as proposed in~\cite{BCDELL17}.  That algorithm uses the
fully corrective FW method~\cite{Hol74,vHoh77,LJJ15}, also known as simplicial
decomposition, to solve subproblems within the progressive hedging (PH)
algorithm~\cite{RW91}. The objective-tolerance inexact ALM and ADMM methods
developed here permit development of similar algorithms for the same
application, but with a simpler convergence theory that does not require the
recourse assumption imposed in~\cite{BCDELL17}.  Unlike the algorithm
in~\cite{BCDELL17}, these methods can use any variant of FW that provides an
objective gap, rather than being restricted to the fully corrective version.
The updated methods proposed here also allow the MILP subproblems to be solved
to gradually tightening objective gaps, rather than exactly.  The convergence
analysis here also allows, through linear changes of variables, for
variable-by-variable proximal/penalty parameter variations that are important
for the practical performance of PH-class methods.  When an FW method is used
on the subproblems, Section~\ref{sec:stochProgFWALM} also shows that an
ALM-based method may be as practical to implement as one based on PH.

\section{Foundations}
This section reviews prior results needed for the later analysis and assumes
basic familiarity with monotone operators on $\real^n$; see for
example~\cite{Roc70book,Roc76a,BauComBook}, with the original citation
being~\cite{Min62}.  For any maximal monotone operator
$T:\real^n\toset\real^n$ and scalar $c > 0$, we define the resolvent or
proximal mapping $\prox_{cT} : \real^n \to \real^n$ by
$\prox_{cT}(r)$ being the unique vector $x\in\real^n$ such
that there exists $y\in T(x)$ with $x + cy = r$.  This mapping may also be
written $\prox_{cT} = (\identity + cT)^{-1}$.

\subsection{Foundational algorithms}

The following potentially inexact and overrelaxed versions of the proximal
point algorithm~\cite{Roc76a} and Douglas-Rachford (DR) splitting method for
monotone operators~\cite{LioMer79} developed in~\cite{EckBer92} form the
basis for the later analysis in this work.

\begin{theorem}[a generalized proximal point algorithm]
\label{thm:genPPA}
Let $T:\real^n\toset\real^n$ be a maximal monotone operator, and suppose that 
\begin{enumerate}[label={(\roman*)}, nosep]
\item \label{item:thmGenPPA:stepsizes} $\{c_k\}_{k=0}^\infty \subset
\real_{++}$ a sequence of positive scalars with $\inf_{k\geq 0} \{ c_k \} > 0$
\item \label{item:thmGenPPA:relax} $\{\nu_k\}_{k=0}^\infty$ is a sequence of
real numbers with $\inf_{k \geq 0} \{\nu_k\} > 0$ and $\sup_{k \geq 0}
\{\nu_k\} < 2$
\item \label{item:thmGenPPA:errors} $\{\epsilon_k\}_{k=0}^\infty$ be a sequence of
nonnegative real numbers with $\sum_{k=0}^\infty \epsilon_k < \infty$.
\end{enumerate}
Starting from an arbitrary $x^0\in \real^n$, suppose that
$\{x^k\}_{k=0}^\infty,\{e^k\}_{k=0}^\infty \subset \real^n$ conform for
all $k \geq 0$ to the recursion
\begin{align} \label{genppa}
x^{k+1} &= \nu_k \prox_{c_k T}(x^k) + (1-\nu_k) x^k + e^k, && \text{where} &
\norm{e^k} &\leq \epsilon_k.
\end{align}
Then if $T^{-1}(0) \neq \emptyset$, the sequence $\{x^k\}$ converges to some
$x^*\in\real^n$ such that $0\in T(x^*)$.  
If $T^{-1}(0) = \emptyset$, then
$\{x^k\}$ must be an unbounded sequence.
\end{theorem}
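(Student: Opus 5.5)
The plan is to reduce everything to firm nonexpansiveness of the resolvent $J_k := \prox_{c_k T}$. By monotonicity of $T$, each $J_k$ satisfies
\[
\norm{J_k(x)-J_k(y)}^2 + \norm{(x-J_k(x))-(y-J_k(y))}^2 \le \norm{x-y}^2 .
\]
Its fixed-point set is $T^{-1}(0)$ for every $k$, independent of $c_k$. Set $R_k := 2J_k - \identity$, which is nonexpansive. Write $\nu_k J_k + (1-\nu_k)\identity = (1-\nu_k/2)\identity + (\nu_k/2)R_k$. Thus the exact iteration map $G_k$ is an averaged operator with averaging parameter $\alpha_k=\nu_k/2 \in [\underline\nu/2,\bar\nu/2] \subset (0,1)$.

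\textbf{Fejér property.} Suppose first that $T^{-1}(0)\neq\emptyset$ and fix $z$ with $0\in T(z)$. The standard averaged-operator identity gives
\[
\norm{G_k(x)-z}^2 \le \norm{x-z}^2 - \tfrac{2-\nu_k}{\nu_k}\norm{x-G_k(x)}^2 .
\]
Equivalently, $\norm{G_k(x)-z}^2 \le \norm{x-z}^2 - \nu_k(2-\nu_k)\norm{x-J_k(x)}^2$. Hence $\norm{x^{k+1}-z}\le \norm{x^k-z}+\epsilon_k$. Summability of $\epsilon_k$ gives boundedness of $\{x^k\}$, and a standard quasi-Fejér lemma shows that $\lim_k\norm{x^k-z}$ exists for every such $z$.

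\textbf{Residuals go to zero.} Expand $\norm{x^{k+1}-z}^2 \le (\norm{G_k(x^k)-z}+\epsilon_k)^2$ and use boundedness to absorb the cross term $2\epsilon_k\norm{G_k(x^k)-z}$, which is summable. This yields
\[
\sum_k \nu_k(2-\nu_k)\norm{x^k-J_k(x^k)}^2<\infty .
\]
Since $\nu_k(2-\nu_k)$ is bounded below by a positive constant, $\norm{x^k-J_k(x^k)}\to 0$. Now set $y^k := (x^k-J_k(x^k))/c_k \in T(J_k(x^k))$. Because $\inf c_k>0$, we get $y^k\to0$. Take any cluster point $x^\infty$ of $\{x^k\}$ along a subsequence. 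Then $J_k(x^k)\to x^\infty$ along that subsequence, and closedness of the graph of a maximal monotone operator gives $0\in T(x^\infty)$. Finally, Opial's argument in finite dimensions finishes the case: $\norm{x^k-x^\infty}$ converges and has a subsequence tending to $0$, so the whole sequence converges to $x^\infty$.

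\textbf{Unbounded case.} Suppose $T^{-1}(0)=\emptyset$ but $\{x^k\}$ is bounded. The same argument cannot start, since there is no $z$ to use. The step I expect to be the main obstacle is therefore producing a zero or a contradiction here. My first attempt would be to derive $\norm{x^k-J_k(x^k)}\to0$ (or at least $\liminf =0$) without a fixed point. One could try comparing two trajectories, but the cleaner route is to use the fact that the range of the displacement $\identity - J_c$ is related to $c$ times the range of $T$. If $0$ is not in the closure of the range of $T$, the steps are bounded away from zero in a fixed direction-type sense, forcing drift to infinity. If instead $0$ lies in the closure of the range but not in the range, then boundedness of the iterates together with a subsequence along which $J_k(x^k)$ converges and $y^k\to0$ would again give $0\in T(x^\infty)$, a contradiction.

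To make this rigorous, I would instead apply the first part to a modified operator. Let $T_B := T + N_B$ with $B$ a large ball containing all iterates and their resolvent images, in its interior. This $T_B$ is maximal monotone with bounded domain, hence has a zero. Its resolvent agrees with $J_k$ on points whose $J_k$-images lie in the interior of $B$. The bounded iterates are then iterates for $T_B$, so they converge to a zero of $T_B$ lying in the interior of $B$, which is a zero of $T$, a contradiction. The delicate point is choosing $B$ so that all $J_k(x^k)$ lie strictly inside it. This follows from boundedness of $x^k$ together with nonexpansiveness of $J_k$ and boundedness of $J_k(p)$ for a fixed $p$, which holds because $\inf c_k>0$ and, I expect, boundedness of $\sup c_k$ is not needed.
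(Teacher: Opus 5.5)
The paper does not prove this theorem; it cites it from \cite{EckBer92}, which extends \cite{Roc76a}. Your proposal is a self-contained version of the standard argument, and it is sound. For the convergent case you use Fej\'er monotonicity of the averaged map $\nu_k J_k + (1-\nu_k)\identity$ with decrease term $\nu_k(2-\nu_k)\norm{x^k-J_k(x^k)}^2$, and then closedness of $\graph T$ along a convergent subsequence; that part is complete as written. For the rootless case you truncate to $T+N_B$, which is the classical device. It works because $T+N_B$ is maximal monotone (since $J_0(x^0)\in\dom T\cap\operatorname{int}B$) and has a zero (since its domain is bounded).

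One justification in the last step is wrong, though the repair is easy. You claim $\{J_k(p)\}$ is bounded for fixed $p$ because $\inf_k c_k>0$. The lower bound on $c_k$ plays no role here. Without $\sup_k c_k<\infty$ the claim is false exactly in the rootless situation you are in: if $T(x)=\{a\}$ for all $x$ with $a\neq 0$, then $J_k(p)=p-c_k a$, which is unbounded when $c_k\to\infty$. The bound you need comes from the recursion itself, under your contradiction hypothesis $\sup_k\norm{x^k}\le\beta$:
\[
\norm{J_k(x^k)} = \frac{\norm{x^{k+1}-(1-\nu_k)x^k-e^k}}{\nu_k} \le \frac{2\beta+\sum_{j}\epsilon_j}{\inf_j \nu_j}.
\]
This uses $\inf_k\nu_k>0$ and $|1-\nu_k|<1$, and it confirms your expectation that no upper bound on $c_k$ is needed. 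Now take $B$ to be a closed ball of radius $r>\max\{\beta,\sup_k\norm{J_k(x^k)}\}$. Then the resolvents of $T$ and $T+N_B$ agree at every $x^k$. The first part gives $x^k\to x^*$ with $0\in(T+N_B)(x^*)$. Since $\norm{x^*}\le\beta<r$, the point $x^*$ lies in $\operatorname{int}B$, where $N_B(x^*)=\{0\}$, so $0\in T(x^*)$, which is the contradiction.

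Also drop the heuristic discussion of the closure of the range of $T$ that precedes the truncation argument. It presupposes $y^k\to 0$, and that is exactly what cannot be obtained without a zero to anchor the Fej\'er estimate.
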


\begin{theorem}[a generalized DR splitting algorithm]
\label{thm:genDR}
Let $A,B:\real^n\toset\real^n$ be two maximal monotone operators, and, for
arbitrary $p^0,z^0\in\real^n$, let $\{p^k\}_{k=0}^\infty,
\{z^k\}_{k=0}^\infty, \{q^k\}_{k=1}^\infty, \{r^k\}_{k=1}^\infty \subset
\real^n$ be sequences evolving according to the recursions, for all $k \geq
0$,
\begin{align}
q^{k+1} &= \prox_{cA}(p^k - c z^k) + e_q^{k} \label{drcycleA} \\
r^{k+1} &= \nu_k q^{k+1} + (1-\nu_k) p^k + c z^k \label{drcycleRelax} \\
p^{k+1} &= \prox_{cB}(r^{k+1}) + e_p^{k} \label{drcycleB} \\
z^{k+1} &= \frac{1}{c}(r^{k+1} - p^{k+1}) \label{drcycleBdual},
\end{align}
where
\begin{itemize}[nosep]
\item $c > 0$ is a fixed scalar,
\item $\{\nu_k\}_{k=0}^\infty \subset \real$ is a sequence such that $\inf_k
\nu_k > 0$ and $\sup_k \nu_k < 2$, and
\item $\{e_q^k\}_{k=0}^\infty, \{e_p^k\}_{k=0}^\infty \subset \real^n$ 
are sequences such that $\sum_{k=0}^\infty \smallnorm{e_q^k} < \infty$ and
$\sum_{k=0}^\infty \smallnorm{e_p^k} < \infty$.
\end{itemize}
Then, if a solution to the inclusion $0 \in A(p) + B(p)$ exists, then
$\{p^k\}$ and $\{q^k\}$ converge to some $p^*$ such that $0 \in
A(p^*)+B(p^*)$, while $\{z^k\}$ converges to some $z^* \in B(p^*)$ such that
$-z^* \in A(p^*)$.  

On the other hand, if no solution to $0 \in A(p) + B(p)$ exists, then at least
one of the sequences $\{p^k\}$ or $\{z^k\}$ must be unbounded.
\end{theorem}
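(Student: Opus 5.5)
The plan is to reduce Theorem~\ref{thm:genDR} to Theorem~\ref{thm:genPPA}, following the classical construction of~\cite{EckBer92}. I will show that the recursion \eqref{drcycleA}--\eqref{drcycleBdual} is an inexact, overrelaxed proximal point iteration with unit stepsize on the sequence $\{x^k\} = \{p^k + c z^k\}$. The operator is the splitting operator $S_{c,A,B}$ of~\cite{EckBer92}. It is the maximal monotone operator whose resolvent $J := \prox_{S_{c,A,B}}$ equals the firmly nonexpansive Douglas--Rachford map
\[
G = \tfrac12\bigl( \identity + (2\prox_{cA} - \identity)(2\prox_{cB} - \identity) \bigr).
\]
The plan is to carry out the error-free case first and then absorb the error terms.

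First I fix the exact correspondence. Given $x$, set $p = \prox_{cB}(x)$ and $z = (x - p)/c \in B(p)$. Then $p - cz = 2p - x$, and $G(x) = x + \prox_{cA}(2p - x) - p$. Reading this against \eqref{drcycleA}--\eqref{drcycleBdual} with zero errors, the next iterate $r^{k+1} = p^{k+1} + c z^{k+1}$ satisfies
\[
r^{k+1} = \nu_k\bigl(q^{k+1} - p^k + c z^k\bigr) + (1-\nu_k)\bigl(p^k + c z^k\bigr) + \nu_k p^k = \nu_k G(x^k) + (1-\nu_k) x^k,
\]
provided $p^k = \prox_{cB}(x^k)$. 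Since $G$ is firmly nonexpansive, $G = (\identity + S)^{-1}$ for the maximal monotone operator $S = G^{-1} - \identity$ (Minty). The zeros of $S$ are the fixed points of $G$. Standard algebra shows these are exactly the points $x^* = p^* + c z^*$ with $z^* \in B(p^*)$ and $-z^* \in A(p^*)$. In particular, $\operatorname{zer} S \neq \emptyset$ iff $0 \in A(p) + B(p)$ is solvable.

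Next I handle the errors, which is the main obstacle. The $B$-step error $e_p^k$ breaks the identity $p^k = \prox_{cB}(x^k)$. I will not work with $x^k = p^k + c z^k$ directly, because \eqref{drcycleBdual} gives $p^k + c z^k = r^k$ exactly. Instead I take the PPA sequence to be $x^k := r^k$ (with $x^0 := p^0 + c z^0$) and define $\hat p^k = \prox_{cB}(r^k)$. Then $p^k = \hat p^k + e_p^{k-1}$ and $c z^k = r^k - p^k = (r^k - \hat p^k) - e_p^{k-1}$. Substituting into \eqref{drcycleA}, the argument $p^k - c z^k$ differs from $2\hat p^k - r^k$ by $2e_p^{k-1}$, and $\prox_{cA}$ is nonexpansive. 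So
\[
r^{k+1} = \nu_k G(r^k) + (1-\nu_k) r^k + e^k, \qquad \|e^k\| \le \nu_k\bigl(\|e_q^k\| + 3\|e_p^{k-1}\|\bigr) \le 2\|e_q^k\| + 6\|e_p^{k-1}\|,
\]
which is summable. (The initial step needs a one-time adjustment, since $p^0$ need not equal $\prox_{cB}(x^0)$. A single finite error term does no harm, or one can simply start the PPA at $k = 1$.) Theorem~\ref{thm:genPPA} with $T = S$, $c_k \equiv 1$ and the given $\nu_k$ then applies.

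Finally I translate the conclusion back. If a solution exists, Theorem~\ref{thm:genPPA} gives $r^k \to x^*$ with $G(x^*) = x^*$. By continuity of $\prox_{cB}$ and $e_p^k \to 0$, we get $p^k \to p^* := \prox_{cB}(x^*)$ and $z^k \to z^* := (x^* - p^*)/c \in B(p^*)$. Likewise $q^{k+1} = \prox_{cA}(p^k - c z^k) + e_q^k \to \prox_{cA}(p^* - c z^*)$. The fixed-point relation gives $\prox_{cA}(2p^* - x^*) = p^*$, that is, $-z^* \in A(p^*)$, so $q^k \to p^*$ as well. If no solution exists, then $\operatorname{zer} S = \emptyset$, so $\{r^k\}$ is unbounded by Theorem~\ref{thm:genPPA}. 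Since $r^k = p^k + c z^k$, at least one of $\{p^k\}$ and $\{z^k\}$ must be unbounded.
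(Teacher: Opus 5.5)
Your proposal is correct and takes essentially the same route as the source the paper relies on. The paper gives no proof of its own and cites \cite{EckBer92}, where the generalized DR method is likewise recast as an inexact, relaxed, unit-stepsize proximal point iteration on $r^k = p^k + c z^k$ for the splitting operator whose resolvent is the Douglas--Rachford map, with the errors in both half-steps absorbed into a summable error of size $O\big(\smallnorm{e_q^k} + \smallnorm{e_p^{k-1}}\big)$. The only difference is cosmetic: \cite{EckBer92} builds the splitting operator explicitly from the graphs of $A$ and $B$ and proves its maximality and resolvent formula as separate results, whereas you define $S := G^{-1} - \identity$ and get maximal monotonicity from the firm nonexpansiveness of $G$ via Minty, which is equally valid.
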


\noindent Both of these convergence results are proved in~\cite{EckBer92}, with some
notation differences.  They respectively generalize foundational results
in~\cite{Roc76a} and~\cite{Gab83}, which lacked the relaxation factors $\nu_k$
and the possibility of inexact computation of the iterates.

\subsection{Monotone operators under linear changes of variables}

For the algorithms to follow, linear changes of variables for monotone
operators will also prove useful: letting $T : \real^n \toset \real^n$ be a
maximal monotone operator and $Q$ be any invertible $n \times n$ matrix, the
point-to-set operator $Q\transpose T Q : \real^n \toset \real^n$ is defined by
\begin{align}
       (\forall\, \bar x \in \real^n) \quad 
          Q\transpose T Q(\bar x) = \set{Q\transpose y}{y \in T(Q \bar x)},
\end{align}
from which it is readily seen that 
\begin{align}
\graph(Q\transpose T Q) 
    &= \set{(Q^{-1} x, Q\transpose y)}{(x,y)\in\graph T}. \label{changecoordgraph}
\end{align}
This operator corresponds to $T$ represented in a different coordinate system
in which each input vector $x$ to $T$ has its representation changed to $\bar
x \doteq Q^{-1} x$ and each output vector has its representation changed to
$\bar y \doteq Q\transpose y$ (for the remainder of this section, bar accents
indicate vectors in the altered coordinate system).

\begin{proposition}[linear changes of variables for a monotone operator]
\label{prop:monotoneChangeCoords}
Suppose $T: \real^m\toset\real^m$ is a maximal monotone operator and that $Q$
is any $n\times n$ invertible matrix.  Then the operator $Q\transpose T Q$ is
maximal monotone.  The roots of $Q\transpose T Q$ take the form $Q^{-1} x$,
where $x$ is a root of $T$.
\end{proposition}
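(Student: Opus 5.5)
The plan is to prove monotonicity directly from the graph description \eqref{changecoordgraph}, and then get maximality by transferring any monotone extension of $Q\transpose T Q$ back to an extension of $T$. (The statement mixes $\real^m$ and $n\times n$; I take $m=n$ throughout.)

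\emph{Monotonicity.} Take two graph points $(\bar x,\bar y),(\bar x',\bar y')$ of $Q\transpose T Q$. By \eqref{changecoordgraph} they have the form $(Q^{-1}x, Q\transpose y)$ and $(Q^{-1}x', Q\transpose y')$ with $(x,y),(x',y')\in\graph T$. Then
\begin{align*}
\inner{\bar x-\bar x'}{\bar y-\bar y'} = \inner{Q^{-1}(x-x')}{Q\transpose(y-y')} = \inner{x-x'}{y-y'} \geq 0 ,
\end{align*}
since $\inner{Q^{-1}u}{Q\transpose v}=\inner{QQ^{-1}u}{v}=\inner{u}{v}$.

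\emph{Maximality.} The key observation is that the map $\Phi(x,y)=(Q^{-1}x,Q\transpose y)$ is a bijection of $\real^n\times\real^n$, with inverse $(\bar x,\bar y)\mapsto(Q\bar x,Q\invtrans\bar y)$. By the identity just used, $\Phi$ preserves the pairing $\inner{x-x'}{y-y'}$ between any two points. Consequently $\Phi$ maps monotone graphs bijectively onto monotone graphs and preserves inclusion between them.

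Now suppose $G\supseteq\graph(Q\transpose T Q)=\Phi(\graph T)$ is a monotone graph. Then $\Phi^{-1}(G)$ is monotone and contains $\graph T$. Maximality of $T$ forces $\Phi^{-1}(G)=\graph T$, and hence $G=\graph(Q\transpose T Q)$. This is the only step needing care: one must check that $\Phi^{-1}$ also preserves the pairing, which follows because its inverse is $\Phi$ itself. An alternative route would be Minty's theorem, showing that $\identity+Q\transpose TQ$ is surjective, but that does not follow cleanly for a nonorthogonal $Q$, so the graph-bijection argument is preferable.

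\emph{Roots.} We have $0\in Q\transpose T Q(\bar x)$ if and only if there is $y\in T(Q\bar x)$ with $Q\transpose y=0$. Invertibility of $Q\transpose$ gives $y=0$, so this holds if and only if $0\in T(Q\bar x)$. Writing $x=Q\bar x$, the roots are exactly the points $\bar x = Q^{-1}x$ with $0\in T(x)$.
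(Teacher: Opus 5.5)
Your proof is correct and follows the paper's argument essentially step for step. Monotonicity comes from the identity $\inner{Q^{-1}u}{Q\transpose v}=\inner{u}{v}$, maximality from transporting through the pairing-preserving bijection $(x,y)\mapsto(Q^{-1}x,Q\transpose y)$ and invoking maximality of $T$, and the root characterization from cancelling the invertible $Q\transpose$. The only cosmetic difference is that the paper phrases maximality pointwise: a pair $(u,v)$ monotonically related to all of $\graph(Q\transpose TQ)$ is sent to $(Qu,Q\invtrans v)$, which must then lie in $\graph T$. You instead phrase it via monotone supersets of the graph, which is an equivalent formulation of the same idea.
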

\begin{proof}
Consider any $(\bar x,\bar y),(\bar x',\bar y')\in\graph(Q\transpose T Q)$.
Then $\bar y=Q\transpose y$ for some $y\in T(Q\bar x)$ and $\bar
y'=Q\transpose y'$ for some $y'\in T(Q\bar x')$.  We then have
\begin{equation*}
\inner{\bar x-\bar x'}{\bar y-\bar y'} 
  = \inner{\bar x-\bar x'}{Q\transpose y - Q\transpose y'}
                   = \inner{Q\bar x-Q\bar x'}{y-y'}
                   \geq 0,
\end{equation*}
where the inequality follows because $T$ is monotone, $y\in T(Q\bar x)$, and
$y'\in T(Q\bar x')$.  By the arbitrary choice of $(\bar x,\bar y),(\bar
x',\bar y')\in\graph(Q\transpose T Q)$, one may conclude that $Q\transpose T
Q$ is monotone.

Now let $u,v\in\real^n$ be any vectors such that $\inner{\bar x-u}{\bar y-v}
\geq 0$ for all $(\bar x,\bar y) \in \graph Q\transpose T Q$. Using the
expression for $\graph(Q\transpose T Q)$ in~\eqref{changecoordgraph}, an
equivalent condition is that
\begin{align*}
\big(\forall\,(x,y)\in\graph T\big) 
&& 0 \leq \inner{Q^{-1}x - u}{Q\transpose y - v} 
&= (Q^{-1}x - u)\transpose Q\transpose  Q\invtrans (Q\transpose y - v) &&&&&& \\
&&&= \big( Q (Q^{-1}x - u) \big)\transpose 
     \big( Q\invtrans(Q\transpose y - v)\big) \\
&&&= (x - Qu)\transpose ( y - Q\invtrans v),
\end{align*}
where $Q\invtrans$ denotes the inverse transpose, $Q\invtrans =
(Q\transpose)^{-1}$. Since $T$ is maximal, the last version of the condition
above implies $\big(Qu,Q\invtrans v\big) \in \graph T$.
Using~\eqref{changecoordgraph} once again, having $(Qu,Q\invtrans v) \in
\graph T$ in turn implies that
\begin{equation*}
\graph(Q\transpose T Q) \ni \big(Q^{-1}Qu,Q\transpose Q\invtrans v\big) = (u,v).
\end{equation*}
Thus, $Q\transpose T Q$ is maximal.

Finally, suppose $0 \in Q\transpose T Q(\bar x)$.  Multiplying on the left by
$Q\invtrans$, it follows that $0 \in T(Q\bar x)$.  Defining $x = Q\bar x$, one
then has that $\bar x = Q^{-1} x$, where $0 \in T(x)$, proving the last
claim.
\end{proof}

\subsection{Resolvents and foundational algorithms combined with
linear changes of variables}

Consider the calculation of resolvent maps of operators of the form
$Q\transpose T Q$ and discussed immediately above. For any $\bar v\in\real^n$
and scalar $c>0$, calculating $\prox_{c ( Q\transpose T Q )}(\bar v)$ involves
finding the unique $(\bar x,\bar y) \in \graph Q\transpose T Q$ such that
$\bar x + c \bar y = \bar v$ and then returning $\bar x$.
Using~\eqref{changecoordgraph}, doing so is equivalent to finding the
necessarily unique $(x,y) \in \graph T$ such that $Q^{-1}x + c Q\transpose y =
\bar v$, and returning $Q^{-1} x$.  Multiplying this last equation by
$Q\invtrans$, an equivalent condition is finding the unique $(x,y)
\in \graph T$ such that $Q\invtrans Q^{-1}x + c y = Q\invtrans \bar v$.
Letting $v \doteq Q\bar v$, so that $\bar v = Q^{-1} v$, one has $Q\invtrans
Q^{-1}x + c y = Q\invtrans Q^{-1} v$.  Setting $S \doteq Q\invtrans Q^{-1}$,
an equivalent condition is $Sx + c y = S v$.  Combining this equation with
$(x,y) \in \graph T$ and rearranging yields the inclusion $S(x-v) + cT(x) \ni
0$.  Since the immediately preceding analysis shows that the solution to this
inclusion is $x = Q^{-1} \prox_{c (Q\transpose T Q)}(Q v)$, it exists and is
unique for any choice of $v\in\real^n$.  The following definition formalizes
this ``preconditioned resolvent'' operation:

\begin{definition}[preconditioned proximal maps]
For any maximal monotone operator $T:\real^n \toset \real^n$, vector $v\in
\real^n$, scalar $c > 0$, and $n\times n$ symmetric positive definite matrix
$S$, let $\sprox{S}_{cT}(v)$ be the unique solution to the inclusion $S(x-v) +
cT(x) \ni 0$, or equivalently $Sx + cT(x) = S v$.
\end{definition}

\noindent In operator notation, one may also write $\sprox{S}_{cT} = (S +
cT)^{-1} S$.  In the special case $T= \partial f$ for a closed proper convex
function $f:\real^n \to \realinf$, the condition that $\dot x$ is the solution
to $S(x-v) + cT(x) \ni 0$ specializes to
\allowdisplaybreaks
\begin{align*}
&&&&                  &&& S(\dot x - v) + c \partial f(\dot x) \ni 0 \\
&&&& \Leftrightarrow  &&& \partial f(\dot x) + \frac{1}{c} S(\dot x - v) \ni 0 \\
&&&& \Leftrightarrow  &&& \dot x = \argmin_{x\in\real^n} 
                        \left\{ f(x) + 
                         \frac{1}{2c} (x - v)\transpose S (x - v) \right\} &&&&&& \\
&&&& \Leftrightarrow  &&& \dot x = \argmin_{x\in\real^n} 
                        \left\{ f(x) + 
                         \frac{1}{2c} \norm{x - v}^2_S \right\},
\end{align*}
under the standard definition of the matrix-induced norm $\norm{u}_S =
\sqrt{u\transpose S u}$.  Thus, one may view the $\sprox{S}_{cT}$ operation as
a version of  standard proximal minimization in which the norm changes the one
induced by the matrix $S = Q\invtrans Q^{-1}$.

The following change-of-variables lemma formalizes the above discussion and
will be used several times below to pass between coordinate systems when
applying proximal mappings:

\begin{lemma}[changes of variable for proximal maps] \label{lem:proxCoV}
Suppose that
\begin{itemize}[nosep]
\item $T:\real^n \toset \real^n$ is maximal monotone
\item $c > 0$ is a scalar
\item $Q$ is an $n\times n$ invertible real matrix
\item $\dot x, v \in \real^n$
\end{itemize}
and definef
\begin{align} \label{covdefs}
S &\doteq Q\invtrans Q^{-1} &
\dotbar x &\doteq Q^{-1}\dot x &
\bar v &\doteq Q^{-1} v.
\end{align}
Then, $\dot x = \sprox{S}_{cT}(v)$ if and only if $\dotbar{x} = \prox_{c
(Q\transpose T Q)}(\bar v)$.
\end{lemma}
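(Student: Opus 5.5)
The plan is to show that both conditions in the lemma are equivalent to one intermediate statement. That statement is the existence of some $y\in T(\dot x)$ with $S\dot x + c y = S v$, which is exactly the defining inclusion for $\dot x = \sprox{S}_{cT}(v)$. Each direction is a chain of reversible algebraic steps using the graph description~\eqref{changecoordgraph} and the invertibility of $Q$ and $Q\transpose$, so I would prove the equivalence directly rather than treating the two implications separately.

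Begin with the right-hand side. Proposition~\ref{prop:monotoneChangeCoords} says $Q\transpose T Q$ is maximal monotone, so $\prox_{c(Q\transpose TQ)}$ is well defined and single-valued. By definition, $\dotbar x = \prox_{c(Q\transpose TQ)}(\bar v)$ holds if and only if some $\bar y \in Q\transpose T Q(\dotbar x)$ satisfies $\dotbar x + c\bar y = \bar v$. By~\eqref{changecoordgraph}, $(\dotbar x,\bar y)\in\graph(Q\transpose TQ)$ holds exactly when $\bar y = Q\transpose y$ for some $y\in T(Q\dotbar x)$. Since $Q\dotbar x = \dot x$ by~\eqref{covdefs}, this means $y\in T(\dot x)$.

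Substituting these into the equation gives that the right-hand side is equivalent to the existence of $y\in T(\dot x)$ with $Q^{-1}\dot x + cQ\transpose y = Q^{-1}v$. Multiplying on the left by the invertible $Q\invtrans$ is reversible, and it yields $Q\invtrans Q^{-1}\dot x + c y = Q\invtrans Q^{-1} v$, that is, $S\dot x + cy = Sv$. This is $S(\dot x - v) + cT(\dot x)\ni 0$.

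It remains to identify this inclusion with $\dot x = \sprox{S}_{cT}(v)$, and this is the only point needing care. The definition of $\sprox{S}_{cT}(v)$ refers to the \emph{unique} solution of the inclusion, so that solution must exist and be unique. I would get this from the same chain of equivalences. Solutions $\dot x$ of the inclusion correspond bijectively, via $\dot x\mapsto Q^{-1}\dot x$, to points $\dotbar x$ satisfying the resolvent condition for $Q\transpose TQ$ at $\bar v$. By maximality and Minty's theorem there is exactly one such $\dotbar x$, so the inclusion has exactly one solution. Hence $\dot x$ satisfies the inclusion if and only if $\dot x = \sprox{S}_{cT}(v)$, which closes the equivalence. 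I also need $S$ to be symmetric positive definite, as the definition of preconditioned proximal maps requires. This follows because $S = Q\invtrans Q^{-1} = (Q^{-1})\transpose Q^{-1}$ with $Q^{-1}$ invertible.
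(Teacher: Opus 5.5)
Your proof is correct and takes essentially the same route as the paper. Both establish a chain of reversible equivalences, obtained by substituting $\dot x = Q\dotbar x$, unpacking the definition of $Q\transpose T Q$, and multiplying through by $Q\transpose$ (equivalently $Q\invtrans$); the paper just runs the chain in the opposite direction, starting from the $\sprox{S}$ inclusion. Your explicit checks that the inclusion has a unique solution and that $S$ is symmetric positive definite are a harmless addition: the paper takes these for granted in the proof, relying on its discussion just before the definition of preconditioned proximal maps.
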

\begin{proof}
The hypothesis $\dot x = \sprox{S}_{cT}(v)$ is by definition equivalent to
\begin{align*}
&&
S(\dot x - v) + cT(\dot x) &\ni 0 \\
\Leftrightarrow &&
Q\invtrans Q^{-1} (\dot x - v) + cT(\dot x) &\ni 0 &&
  [\text{by the definition of~} S] \\
\Leftrightarrow &&
Q\invtrans (Q^{-1} \dot x - Q^{-1} v) + cT(\dot x) &\ni 0 \\
\Leftrightarrow &&
Q^{-1} \dot x - Q^{-1} v + c Q\transpose T(\dot x) &\ni 0 &&
  [\text{multiplying by~} Q\transpose] \\
\Leftrightarrow &&
\dotbar x - \bar v + c Q\transpose T(Q \dotbar x) &\ni 0 &&
  [\text{substituting from~\eqref{covdefs}}] \\
\Leftrightarrow &&  
\dotbar x + c (Q\transpose TQ)(\dotbar x) &\ni \bar v
\end{align*}
The last inclusion is equivalent to the claimed result $\dotbar{x} = \prox_{c
(Q\transpose T Q)}(\bar v)$.
\end{proof}

The two propositions below respectively present preconditioned version of the
generalized proximal point and DR splitting methods.  Only the
symmetric preconditioner matrix $S$ will be specified, and the propositions'
convergence proofs employ a matrix $Q$ such that $Q\invtrans Q^{-1} = S$.  A
simple way to construct such a $Q$ is to make it the standard symmetric square
root of $S^{-1}$: let $\lambda_{\max} = \lambda_1 \geq \cdots \geq
\lambda_n = \lambda_{\min} > 0$ be the eigenvalues of $S$ and let $U$ be an $n
\times n$ orthogonal matrix whose columns are corresponding eigenvectors, so
that $S = U \diag(\lambda_1, \ldots, \lambda_n) U\transpose$.  Then let $Q
\doteq S^{-1/2} \doteq U \diag(\lambda_1^{-1/2}, \ldots, \lambda_n^{-1/2})
U\transpose$, so that $Q^{-1} = Q\invtrans = U \diag(\lambda_1^{1/2}, \ldots,
\lambda_n^{1/2}) U\transpose$.  Then
\allowdisplaybreaks[0]
\begin{align*}
Q\invtrans Q^{-1} 
&= U \diag(\lambda_1^{1/2}, \ldots, \lambda_n^{1/2}) U\transpose
  U \diag(\lambda_1^{1/2}, \ldots, \lambda_n^{1/2}) U\transpose \\
&= U \diag(\lambda_1^{1/2}, \ldots, \lambda_n^{1/2}) 
       \diag(\lambda_1^{1/2}, \ldots, \lambda_n^{1/2}) U\transpose \\
&= U \diag(\lambda_1, \ldots, \lambda_n) U\transpose \\
&= S,
\end{align*}
as claimed.  Furthermore, $\smallnorm{Q} = \lambda_{\min}^{-1/2}$ and
$\smallnorm{Q^{-1}} = \lambda_{\max}^{1/2} = \smallnorm{S}^{1/2}$.

The following result establishes that the generalized proximal point algorithm
retains its convergence properties when $\sprox{S}$ operators are
substituted for the usual resolvents.

\begin{proposition} 
\label{prop:genPPArs}
Let $T:\real^n\toset\real^n$ be a maximal monotone operator, and $S$ be an $n \times n$ symmetric positive definite matrix.   Further suppose that 
\begin{enumerate}[label={(\roman*)}, nosep]
\item \label{item:thmGenPPArs:stepsizes} $\{c_k\}_{k=0}^\infty \subset
\real_{++}$ a sequence of positive scalars with $\inf_{k\geq 0} \{ c_k \} > 0$
\item \label{item:thmGenPPArs:relax} $\{\nu_k\}_{k=0}^\infty$ is a sequence of
real numbers with $\inf_{k \geq 0} \{\nu_k\} > 0$ and $\sup_{k \geq 0}
\{\nu_k\} < 2$
\item \label{item:thmGenPPArs:errors} $\{\epsilon_k\}_{k=0}^\infty$ be a sequence of
nonnegative real numbers with $\sum_{k=0}^\infty \epsilon_k < \infty$.
\end{enumerate}
Starting from an arbitrary $x^0\in \real^n$, suppose that
$\{x^k\}_{k=0}^\infty,\{e^k\}_{k=0}^\infty \subset \real^n$ conform for
all $k \geq 0$ to the conditions
\begin{align} \label{genppars}
x^{k+1} &= \nu_k \cdot \sprox{S}_{c_k T}(x^k) + (1-\nu_k) x^k + e^k &
\norm{e^k} &\leq \epsilon_k.
\end{align}
Then if $T^{-1}(0) \neq \emptyset$, the sequence $\{x^k\}$ converges to some
$x^*\in\real^n$ such that $0\in T(x^*)$.  If $T^{-1}(0) = \emptyset$, then
$\{x^k\}$ must be an unbounded sequence.
\end{proposition}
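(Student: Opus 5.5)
The plan is to reduce Proposition~\ref{prop:genPPArs} to Theorem~\ref{thm:genPPA} by a linear change of variables. Take $Q \doteq S^{-1/2}$ as constructed above, so that $Q\invtrans Q^{-1} = S$, $\smallnorm{Q} = \lambda_{\min}^{-1/2}$ and $\smallnorm{Q^{-1}} = \smallnorm{S}^{1/2}$. Define the transformed iterates and errors
\begin{align*}
\bar x^k &\doteq Q^{-1} x^k, & \bar e^k &\doteq Q^{-1} e^k,
\end{align*}
and the operator $\bar T \doteq Q\transpose T Q$. By Proposition~\ref{prop:monotoneChangeCoords}, $\bar T$ is maximal monotone.

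First, rewrite the recursion~\eqref{genppars} in the new coordinates. By Lemma~\ref{lem:proxCoV}, applied with $v = x^k$ and $\dot x = \sprox{S}_{c_k T}(x^k)$, we get $Q^{-1}\sprox{S}_{c_k T}(x^k) = \prox_{c_k \bar T}(\bar x^k)$. Multiplying~\eqref{genppars} on the left by $Q^{-1}$ and using linearity then gives
\begin{align*}
\bar x^{k+1} = \nu_k \prox_{c_k \bar T}(\bar x^k) + (1-\nu_k)\bar x^k + \bar e^k,
\end{align*}
with $\smallnorm{\bar e^k} \leq \smallnorm{Q^{-1}}\epsilon_k = \smallnorm{S}^{1/2}\epsilon_k$. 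Set $\bar\epsilon_k \doteq \smallnorm{S}^{1/2}\epsilon_k$. This sequence is summable, and the conditions on $\{c_k\}$ and $\{\nu_k\}$ are unchanged, so all hypotheses of Theorem~\ref{thm:genPPA} hold for $\bar T$, $\{\bar x^k\}$ and $\{\bar e^k\}$.

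Next, translate the conclusions back to the original coordinates, splitting into the two cases of the theorem.

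\emph{Case $T^{-1}(0)\neq\emptyset$.} If $0 \in T(x)$, then $0 = Q\transpose 0 \in Q\transpose T(Q Q^{-1}x)$, so $Q^{-1}x$ is a root of $\bar T$ and $\bar T^{-1}(0)\neq\emptyset$. Theorem~\ref{thm:genPPA} then gives $\bar x^k \to \bar x^*$ with $0\in\bar T(\bar x^*)$. Hence $x^k = Q\bar x^k \to x^* \doteq Q\bar x^*$. By the root characterization in Proposition~\ref{prop:monotoneChangeCoords}, $\bar x^* = Q^{-1}x$ for some root $x$ of $T$, so $x^* = x$ satisfies $0\in T(x^*)$.

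\emph{Case $T^{-1}(0)=\emptyset$.} Proposition~\ref{prop:monotoneChangeCoords} shows that roots of $\bar T$ correspond exactly to roots of $T$, so $\bar T^{-1}(0)=\emptyset$ as well. Theorem~\ref{thm:genPPA} then makes $\{\bar x^k\}$ unbounded. Since $\smallnorm{\bar x^k}\leq\smallnorm{Q^{-1}}\smallnorm{x^k}$, the sequence $\{x^k\}$ is unbounded too.

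There is no real obstacle here. The only points needing care are checking that Lemma~\ref{lem:proxCoV} is applied with matching definitions, namely $v = x^k$ and $\bar v = Q^{-1}x^k = \bar x^k$, and using the correct norm bound $\smallnorm{Q^{-1}}$ so that the transformed errors remain summable.
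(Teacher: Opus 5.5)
Your proposal is correct and is essentially the paper's own proof. It uses the same change of variables $Q = S^{-1/2}$, applies Lemma~\ref{lem:proxCoV} to turn the recursion into that of Theorem~\ref{thm:genPPA} for $Q\transpose T Q$ with errors bounded by $\smallnorm{S}^{1/2}\epsilon_k$, and uses Proposition~\ref{prop:monotoneChangeCoords} to carry the roots, and hence convergence or unboundedness, back to the original coordinates.
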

\begin{proof}
Let $Q \doteq S^{-1/2}$ as described above, so that $Q\invtrans Q^{-1} =
Q^{-2} = S$ and $\smallnorm{Q^{-1}} = \smallnorm{S}^{1/2}$. The proof revolves
around relating the path of $\{x^k\}$ to that of the original algorithm of
Theorem~\ref{thm:genPPA} to the operator $\overline T \doteq Q\transpose T Q =
Q T Q$ using the same $\{c_k\}$ and $\{\nu_k\}$, along with an appropriately
defined error sequence.  Define
\begin{align} \label{bardefs1}
(\forall\,k \geq 0) &&
\bar x^k &\doteq Q^{-1} x^k & 
\dot x^k &\doteq \sprox{S}_{c_k T}(x^k) &
\dotbar{x}^k &\doteq Q^{-1} \dot x^k &
\bar e^k &\doteq Q^{-1} e^k.
\end{align}
Here, $\dot x^k$ is the exact result of the $\sprox{S}_{c_k T}$ operation at
iteration $T$, so that~\eqref{genppars} may be written as
\begin{equation*}
(\forall\,k\geq 0) \quad
  x^{k+1} = \nu_k \dot x^k + (1-\nu_k) x^k + e^k.
\end{equation*}
Multiplying this equation by $Q^{-1}$ by yields
\begin{align}
(\forall\,k\geq 0) && &&
  Q^{-1} x^{k+1} &= \nu_k Q^{-1} \dot x^k + (1-\nu_k) Q^{-1} x^k + Q^{-1} e^k &&&& 
  \nonumber \\
&& \Leftrightarrow &&
  \bar x^{k+1} &= \nu_k \dotbar x^k + (1-\nu_k) \bar x^k + \bar e^k.
  \label{almostbarredgenppa}
\end{align}

For each $k \geq 0$, using~\eqref{bardefs1} and applying
Lemma~\ref{lem:proxCoV} under the substitutions $\dot x \leftarrow \dot x^k$
and $v \leftarrow x^k$ yields $\dotbar x^k = \prox_{c_k (Q\transpose T
Q)}(\bar x^k) = \prox_{c_k \overline{T}}(\bar x^k)$, which may be substituted
into~\eqref{almostbarredgenppa} to produce
\begin{equation*}
(\forall\,k\geq 0) \quad
\bar x^{k+1} = \nu_k \prox_{c_k \overline{T}}(\bar x^k) 
               + (1-\nu_k) \bar x^k + \bar e^k.
\end{equation*}
This recursion is identical to~\eqref{genppa} under the substitutions $T
\leftarrow \overline{T}$, $x^k \leftarrow \bar x^k$, and $e^k \leftarrow \bar
e^k$. Furthermore,
\begin{align*}
(\forall\, k \geq 0) &&
\norm{\bar e^k} &= \norm{Q^{-1} e^k} \leq \norm{Q^{-1}} \norm{e^k} 
            = \norm{S}^{1/2} \norm{e^k}
            \leq \norm{S}^{1/2} \epsilon_k.
\end{align*}
Since $\{\epsilon_k\}$ is summable, it follows that $\big\{\smallnorm{\bar
e^k}\big\}$ is summable.  Therefore all the assumptions of
Theorem~\ref{thm:genPPA} are met, and the theorem asserts that $\{\bar x^k\}$
converges to a root $\bar x^*$ of $\overline{T}$ if one exists, and is
otherwise unbounded.

By Proposition~\ref{prop:monotoneChangeCoords}, roots of $\overline{T} = QTQ =
Q\transpose T Q$ exist if and only if roots of $T$ exist.  If these roots
exist, Theorem~\ref{thm:genPPA} asserts that $\{\bar x^k\}$ converges to a
root $\overline{T}$. Again using Proposition~\ref{prop:monotoneChangeCoords},
$\bar x^* = Q^{-1} x^*$, where $x^*$ is some root of $T$.
From~\eqref{bardefs1}, $x^k = Q \bar x^k$ for all $k\geq 0$, so by continuity
of the linear map $Q$,
\begin{equation*}
\lim_{k\to \infty} x^k 
= \lim_{k\to \infty} Q \bar x^k
= Q\left(\lim_{k\to \infty} \bar x^k \right)
= Q \bar x^*
= Q Q^{-1} x^* 
= x^*,
\end{equation*}
meaning that $\{x^k\}$ converges to a root of $T$.  

On the other hand, if $T$ has no roots, then $\overline{T} = Q\transpose T Q$
also has no roots, and Theorem~\ref{thm:genPPA} asserts that $\{\bar x^k\}$ is
unbounded. By the nonsingularity of $Q$, if then follows that $\{x^k\} = \{Q
\bar x^k \}$ is also unbounded.
\end{proof}

\begin{proposition}[a generalized DR splitting algorithm]
\label{prop:genDRrs}
Let $A,B:\real^n\toset\real^n$ be two maximal monotone operators, and, for
arbitrary $p^0,z^0\in\real^n$, let $\{p^k\}_{k=0}^\infty,
\{z^k\}_{k=0}^\infty, \{q^k\}_{k=1}^\infty, \{r^k\}_{k=1}^\infty \subset
\real^n$ be sequences evolving according to the recursions, for all $k \geq
0$,
\begin{align}
q^{k+1} &= \sprox{S}_{cA}(p^k - c S^{-1} z^k) + e_q^{k} \label{drcycleArs} \\
r^{k+1} &= \nu_k q^{k+1} + (1-\nu_k) p^k + c S^{-1} z^k \label{drcycleRelaxrs} \\
p^{k+1} &= \sprox{S}_{cB}(r^{k+1}) + e_p^{k} \label{drcycleBrs} \\
z^{k+1} &= \frac{1}{c}S(r^{k+1} - p^{k+1}) \label{drcycleBdualrs},
\end{align}
where
\begin{itemize}[nosep]
\item $c > 0$ is a fixed scalar,
\item $S$ is a positive definite $n \times n$ symmetric matrix
\item $\{\nu_k\}_{k=0}^\infty \subset \real$ is a sequence such that $\inf_k
\nu_k > 0$ and $\sup_k \nu_k < 2$, and
\item $\{e_q^k\}_{k=0}^\infty, \{e_p^k\}_{k=0}^\infty \subset \real^n$ 
are sequences such that $\sum_{k=0}^\infty \smallnorm{e_q^k} < \infty$ and
$\sum_{k=0}^\infty \smallnorm{e_p^k} < \infty$.
\end{itemize}
Then, if a solution to the inclusion $0 \in A(p) + B(p)$ exists, then
$\{p^k\}$ and $\{q^k\}$ converge to some $p^*$ such that $0 \in
A(p^*)+B(p^*)$, while $\{z^k\}$ converges to some $z^* \in
B(p^*)$ such that $-z^* \in A(p^*)$.  

If no solution to $0 \in A(p) + B(p )$ exists, then at least
one of the sequences $\{p^k\}$ or $\{z^k\}$ must be unbounded.
\end{proposition}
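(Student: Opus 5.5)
The plan is to mimic the proof of Proposition~\ref{prop:genPPArs}. Let $Q \doteq S^{-1/2}$, so that $Q\invtrans Q^{-1} = S$. Form $\overline A \doteq Q\transpose A Q$ and $\overline B \doteq Q\transpose B Q$; both are maximal monotone by Proposition~\ref{prop:monotoneChangeCoords}. The primal-type iterates will be transformed by $Q^{-1}$ and the dual iterates by $Q\transpose$, matching how \eqref{changecoordgraph} treats inputs and outputs:
\begin{align*}
\bar p^k &\doteq Q^{-1}p^k, & \bar q^k &\doteq Q^{-1}q^k, & \bar r^k &\doteq Q^{-1} r^k, & \bar z^k &\doteq Q\transpose z^k,\\
\bar e_q^k &\doteq Q^{-1}e_q^k, & \bar e_p^k &\doteq Q^{-1}e_p^k.
\end{align*}
The goal is to show that these barred sequences obey exactly the recursions \eqref{drcycleA}--\eqref{drcycleBdual} of Theorem~\ref{thm:genDR}, with $A,B$ replaced by $\overline A,\overline B$.

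The key computations are as follows.

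\textbf{Step \eqref{drcycleArs}.} Apply Lemma~\ref{lem:proxCoV} with $v \leftarrow p^k - cS^{-1}z^k$. This gives $Q^{-1}\sprox{S}_{cA}(v) = \prox_{c\overline A}(Q^{-1}v)$, and we need $Q^{-1}v = \bar p^k - c\bar z^k$. Since $S^{-1} = QQ\transpose$, we have $Q^{-1}S^{-1}z^k = Q\transpose z^k = \bar z^k$, which is what we want. Multiplying \eqref{drcycleArs} by $Q^{-1}$ then yields $\bar q^{k+1} = \prox_{c\overline A}(\bar p^k - c\bar z^k) + \bar e_q^k$.

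\textbf{Step \eqref{drcycleRelaxrs}.} Multiplying by $Q^{-1}$ and using the same identity gives $\bar r^{k+1} = \nu_k\bar q^{k+1} + (1-\nu_k)\bar p^k + c\bar z^k$.

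\textbf{Step \eqref{drcycleBrs}.} Lemma~\ref{lem:proxCoV} gives $\bar p^{k+1} = \prox_{c\overline B}(\bar r^{k+1}) + \bar e_p^k$.

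\textbf{Step \eqref{drcycleBdualrs}.} Multiplying by $Q\transpose$ and using $Q\transpose S = Q\transpose Q\invtrans Q^{-1} = Q^{-1}$ gives $\bar z^{k+1} = \frac1c(\bar r^{k+1} - \bar p^{k+1})$.

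The error bounds transfer directly: $\smallnorm{\bar e_q^k} \le \smallnorm{S}^{1/2}\smallnorm{e_q^k}$, and likewise for $\bar e_p^k$. Hence both error sequences remain summable, and Theorem~\ref{thm:genDR} applies to the barred sequences.

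To return to the original variables, I need the solutions to correspond. The set $\overline A(\bar p) + \overline B(\bar p)$ equals $Q\transpose\big(A(Q\bar p)+B(Q\bar p)\big)$, and $Q\transpose$ is invertible. So $0\in \overline A(\bar p)+\overline B(\bar p)$ if and only if $0 \in A(p)+B(p)$ with $p = Q\bar p$. This is a direct argument rather than Proposition~\ref{prop:monotoneChangeCoords} itself, because that proposition is stated for a single operator.

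When solutions exist, Theorem~\ref{thm:genDR} gives $\bar p^k,\bar q^k\to\bar p^*$ and $\bar z^k\to\bar z^*$, where $\bar z^*\in\overline B(\bar p^*)$ and $-\bar z^*\in \overline A(\bar p^*)$. Set $p^* = Q\bar p^*$ and $z^* = Q\invtrans\bar z^*$. By continuity, $p^k,q^k\to p^*$ and $z^k\to z^*$. By \eqref{changecoordgraph}, $\bar z^* \in \overline B(\bar p^*)$ means $\bar z^* = Q\transpose y$ for some $y\in B(Q\bar p^*)$, so $z^* = y \in B(p^*)$; the same argument gives $-z^*\in A(p^*)$.

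When no solution exists, Theorem~\ref{thm:genDR} says $\{\bar p^k\}$ or $\{\bar z^k\}$ is unbounded. Since $Q$ and $Q\invtrans$ are nonsingular, $\{p^k\}$ or $\{z^k\}$ is then unbounded as well.

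The main obstacle is bookkeeping rather than analysis: the dual variables must be transformed by $Q\transpose$, not by $Q^{-1}$. This is exactly what makes the $cS^{-1}z^k$ and $\frac1c S(\cdot)$ terms collapse to their unpreconditioned forms, through the identities $Q^{-1}S^{-1} = Q\transpose$ and $Q\transpose S = Q^{-1}$. I would verify these two identities first, and then the four-line translation of the recursion goes through.
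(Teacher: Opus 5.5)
Your proposal is correct and follows essentially the same route as the paper. Both take $Q = S^{-1/2}$, map $p^k,q^k,r^k$ and the errors by $Q^{-1}$ and $z^k$ by $Q\transpose$ (the paper writes $Qz^k$, which is the same since $Q$ is symmetric), and use Lemma~\ref{lem:proxCoV} together with the identities $Q^{-1}S^{-1}=Q\transpose$ and $Q\transpose S = Q^{-1}$ to reduce to Theorem~\ref{thm:genDR} for $\overline A = Q\transpose A Q$ and $\overline B = Q\transpose B Q$, with summable errors. Both then pull the conclusions back through the same direct root correspondence for $\overline A + \overline B$.
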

\begin{proof}
As in the previous proof, start by letting $Q \doteq S^{-1/2}$ as described
above, so that $Q\invtrans Q^{-1} = Q^{-2} = S$ and $\smallnorm{Q^{-1}} =
\smallnorm{S}^{1/2}$.  Define
$\overline{A} \doteq Q\transpose A Q = QAQ$ and $\overline B \doteq Q\transpose B
Q = QBQ$, which are both maximal monotone by
Proposition~\ref{prop:monotoneChangeCoords}, and observe that
\begin{align*}
&&&&&& 0 &\in \overline A(\bar x) + \overline B(\bar x) &
\Leftrightarrow &&
0 &\in Q A(Q\bar x) + Q B(Q\bar x) &&&&&&&& \\
&&&&&&&&\Leftrightarrow &&
0 &\in A(Q\bar x) + B(Q\bar x) \\
&&&&&&&&\Leftrightarrow &&
0 &\in (A + B)(Q \bar x),
\end{align*}
so that the roots of $\overline A + \overline B$ are of the form $Q^{-1} x$,
where $x$ is a root of $A+B$.  

The proof now proceeds much like a more complicated version of the previous
one, relating the sequences produced
by~\eqref{drcycleArs}-\eqref{drcycleBdualrs} to those evolved
by~\eqref{drcycleA}-\eqref{drcycleBdual} applied to the operators $\overline
A$ and $\overline B$.  Define
\begin{align*}
(\forall\,k\geq 0) && \dot q^{k+1} &\doteq \sprox{S}_{cA}(p^k - c z^k) &
                      \dot p^{k+1} &\doteq \sprox{S}_{cB}(r^{k+1}),
\end{align*}
so that
\begin{align*}
(\forall\,k\geq 1) && q^k &= \dot q^k + e_q^k &
                      p^k &= \dot p^k + e_p^k.
\end{align*}
Further define
\begin{align*}
(\forall\,k\geq 1) &&
\bar q^k &\doteq Q^{-1} q^k & 
\dotbar q^k &\doteq Q^{-1} \dot q^k &
\bar r^k &\doteq Q^{-1} r^k &
\dotbar p^k &\doteq Q^{-1} \dot p^k \\
(\forall\,k\geq 0) &&
\bar p^k &\doteq Q^{-1} p^k &
\bar e_q^k &\doteq Q^{-1} e_q^k &
\bar e_p^k &\doteq Q^{-1} e_p^k &
\bar z^k &\doteq Q z^k &
\end{align*}
Applying Lemma~\ref{lem:proxCoV} to $q^{k+1} \doteq \sprox{S}_{cA}(p^k - c
z^k)$ with $T \leftarrow A$, $\dot x \leftarrow \dot q^{k+1}$, and $v
\leftarrow p^k - c S^{-1} z^k$ and therefore 
\begin{equation*}
\bar v \leftarrow Q^{-1}(p^k - c S^{-1} z^k)
  = Q^{-1}(p^k - c Q^2 z^k) = Q^{-1}p^k - cQz^k = \bar p^k - c \bar z^k
\end{equation*}
yields
\begin{equation*} 
(\forall\,k\geq 0) \quad
\dotbar q^k = \prox_{c(Q\transpose A Q)}(\bar p^k - c \bar z^k)
            = \prox_{c \overline A}(\bar p^k - c \bar z^k).
\end{equation*}
Next, starting with the definition of $\bar q^k$ and finally using the immediately
preceding expression for~$\dotbar q^k$,
\begin{multline}
(\forall\,k\geq 0) \quad
\bar q^{k+1} =  Q^{-1} q^{k+1} = Q^{-1} (\dot q^{k+1} + e_q^{k} ) 
  \\ = Q^{-1} \dot q^{k+1} + Q^{-1} e_q^{k} 
     = \dotbar q^{k+1} + \bar e_q^{k} 
     = \prox_{c \overline A}(\bar p^k - c \bar z^k) + \bar e_q^{k}.
       \label{drrsA}
\end{multline}
Subsequently, multiplying~\eqref{drcycleRelaxrs} by $Q^{-1}$ and using that
$Q^{-1}S^{-1} z^k = Q^{-1}Q^2 z^k = Q z^k = \bar z^k$ leads to
\begin{align}
(\forall\,k\geq 0) &&&&
Q^{-1} r^{k+1} &= \nu_k Q^{-1} q^{k+1} + (1-\nu_k) Q^{-1} p^k + c Q^{-1} S^{-1} z^k \nonumber \\
&& \Leftrightarrow &&
\bar r^{k+1} &= \nu_k \bar q^{k+1} + (1-\nu_k) \bar p^k + c \bar z^k. &&&&&&
   \label{drrsrelax}
\end{align}
Next, for each $k \geq 0$, one applies Lemma~\ref{lem:proxCoV} with $T
\leftarrow B$, $\dot x \leftarrow \dot p^{k+1}$, and $v \leftarrow r^{k+1}$,
and therefore
\begin{align*}
\dotbar x &= Q^{-1} \dot x = Q^{-1} \dot p^{k+1} = \dotbar p^{k+1} &
\bar v &= Q^{-1} v = Q^{-1} r^{k+1} = \bar r^{k+1}.
\end{align*}
Since $\dot p^{k+1} = \sprox{S}_{cB}(r^{k+1})$, the conclusion of the lemma is
then that
\begin{equation} \label{drBstepequiv}
(\forall\,k \geq 0) \quad 
\dotbar p^{k+1} = \prox_{c(Q\transpose B Q)}(\bar r^{k+1}) 
                = \prox_{c\overline B}(\bar r^{k+1}).
\end{equation}
Starting with the definition of $\{\bar p^k\}$ one then has 
\begin{align}
(\forall\,k\geq 0) \quad
\bar p^{k+1} &= Q^{-1} p^{k+1} \nonumber \\
   &= Q^{-1} \big( \sprox{S}_{cB}(r^{k+1}) + e_p^{k+1} \big) 
         && [\text{from~\eqref{drcycleBrs}}] \nonumber \\
   &= Q^{-1} \dot p^{k+1} + Q^{-1} e_p^{k+1} \nonumber \\
   &= \dotbar p^{k+1} + \bar e_p^{k+1} \nonumber \\
   &= \prox_{c\overline B}(\bar r^{k+1}) + \bar e_p^{k+1}
        && [\text{from~\eqref{drBstepequiv}}]. \label{drrsB}
\end{align}
Finally, substituting the last recursion step~\eqref{drcycleBdualrs} into the definition of $\bar z^k$ yields
\begin{multline}
(\forall\,k\geq 0) \qquad
\bar z^{k+1} = Q z^{k+1} = \frac{1}{c} Q S(r^{k+1} - p^{k+1})
  = \frac{1}{c}Q Q^{-2} (r^{k+1} - p^{k+1}) \\
  = \frac{1}{c}(Q^{-1}r^{k+1} - Q^{-1}p^{k+1})
  = \frac{1}{c}(\bar r^{k+1} - \bar p^{k+1}).
 \label{drrsduals}
\end{multline}
Collecting~\eqref{drrsA}, \eqref{drrsrelax}, \eqref{drrsB},
and~\eqref{drrsduals}, one has for all $k \geq 0$ that 
\begin{align*}
\bar q^{k+1} &= \prox_{c \overline A}(\bar p^k - c \bar z^k) + \bar e_q^{k} \\
\bar r^{k+1} &= \nu_k \bar q^{k+1} + (1-\nu_k) \bar p^k + c \bar z^k  \\
\bar p^{k+1} &= \prox_{c \overline B}(\bar r^{k+1}) + \bar e_p^{k}  \\
\bar z^{k+1} &= \frac{1}{c}(\bar r^{k+1} - \bar p^{k+1}).
\end{align*}
This set of recursions is identical to those of Theorem~\ref{thm:genDR} except
for the overbars on every vector and the substitutions $A \leftarrow \overline
A$ and $B \leftarrow \overline B$.  Furthermore,
\begin{align*}
(\forall\,k\geq 1) &&
\norm{\bar e_q^k} &= \norm{Q^{-1} e_q^k} \leq \norm{Q^{-1}} \norm{e_q^k}
                   = \norm{S}^{1/2} \norm{e_q^k} &&&&&& \\ &&
\norm{\bar e_p^k} &= \norm{Q^{-1} e_p^k} \leq \norm{Q^{-1}} \norm{e_p^k}
                   = \norm{S}^{1/2} \norm{e_p^k},              
\end{align*}
so the summability of $\big\{\smallnorm{e_q^k}\big\}$ and
$\big\{\smallnorm{e_p^k}\big\}$ implies that $\big\{\smallnorm{\bar
e_q^k}\big\}$ and $\big\{\smallnorm{\bar e_p^k}\big\}$ are summable.
Therefore, Theorem~\ref{thm:genDR} applies.

Suppose now that $A + B$ has roots.  Then $\overline A + \overline B$ has
roots, and Theorem~\ref{thm:genDR} asserts that $\{\bar p^k\}$ and $\{\bar
q^k\}$ converge to some $\bar p^*$ such that $0 \in \overline A(\bar
p^*)+\overline B(\bar p^*)$, while $\{\bar z^k\}$ converges to some $\bar z^*
\in B(\bar p^*)$ such that $- \bar z^* \in A(\bar p^*)$.  Since $\bar p^*$ is
root of $\overline A + \overline B$, it is of the form $Q^{-1}p^*$, where
$p^*$ is a root of $A+B$, as argued at the beginning of the proof.  By the
continuity of the linear operator $Q$, one then has that
\begin{align*}
\lim_{k\to\infty} p^k &= \lim_{k\to\infty} Q\bar p^k = Q \bar p^* = Q Q^{-1} p^* = p^* &
\lim_{k\to\infty} q^k &= \lim_{k\to\infty} Q\bar q^k = Q \bar q^* = Q Q^{-1} p^* = p^*,
\end{align*}
establishing the claimed convergence of $\{p^k\}$ and $\{q^k\}$.
Theorem~\ref{thm:genDR} also asserts that $\{\bar z^k\}$ converges to some
$\bar z^* \in \overline{B}(\bar p^*)$ such that $-\bar z^* \in
\overline{A}(\bar p^*)$.  From the definitions of $\overline A$ and $\overline
B$, defining $z^* \doteq Q^{-1}\bar z^*$, and rewriting $\bar p^* = Q^{-1}
p^*$ as $Q \bar p^* = p^*$, one then has
\begin{align}
&&&&&&&&
         \bar z^* &\in Q B(Q\bar p^*) && \wedge &
       - \bar z^* &\in Q A(Q\bar p^*) &&&&&&&& \nonumber \\
&&&&&&\Leftrightarrow &&
         Q^{-1} \bar z^* &\in B(Q\bar p^*) && \wedge &
       - Q^{-1} \bar z^* &\in A(Q\bar p^*) \nonumber \\
&&&&&&\Leftrightarrow &&
          z^* &\in B(p^*) && \wedge &
        - z^* &\in A(p^*). \label{drzEquiv}
\end{align}
For all $k \geq 0$, one has $z^k = Q^{-1} \bar z^k$ because $\bar z^k$ was
defined equal to $Qz^k$.  Therefore, using the continuity of the linear map
$Q^{-1}$,
\begin{equation*}
\lim_{k\to\infty} z^k
 = \lim _{k\to\infty} Q^{-1} \bar z^k
 = Q^{-1}\left(\lim_{k\to\infty} z^k\right)
 = Q^{-1}\bar z^*
 = z^*
\end{equation*}
which in combination with~\eqref{drzEquiv} proves the assertion about the
convergence of $\{z^k\}$.

It remains to consider the case that $A + B$ has no roots.  In this situation,
$\overline A + \overline B$ also has no roots, and so Theorem~\ref{thm:genDR}
asserts that $\{\bar p^k\} = \{Q^{-1} p^k\}$ or $\{\bar z^k\} =
\{Q z^k\}$ is unbounded. Since $Q^{-1}$ and $Q$ are both nonsingular, it
follows that $\{p^k\}$ or $\{z^k\}$ must be unbounded.
\end{proof}

Observing that the inclusion $S(x-r) + cT(x) \ni 0$ in the definition of the
$\sprox{S}$ operation is equivalent to $(1/c)S(x-r) + T(x) \ni 0$, it follows
that $\sprox{S}_{cT} = \sprox{\big((1/c)S\big)}_T$ for any monotone operator $T$.
Noting also how $c$ and $S$ alway occur together
in~\eqref{drcycleArs}-\eqref{drcycleBdualrs}, it is possible to fix $c = 1$ in
the algorithm, since the effect of $c \neq 1$ may be equivalently obtained by
scaling $S$ by $1/c$.  Thus, $c$ is included primarily for historical reasons.

The algorithm above may be streamlined somewhat by replacing the vectors $z^k$
with the vectors $\tilde z^k \doteq S^{-1} z^k$ for all $k \geq 0$.  One then
obtains the equivalent set of recursions
\begin{align}
q^{k+1} &= \sprox{S}_{cA}(p^k - c \tilde z^k) + e_q^{k} \label{drcycleA1} \\
r^{k+1} &= \nu_k q^{k+1} + (1-\nu_k) p^k + c \tilde z^k \label{drcycleRelax1} \\
p^{k+1} &= \sprox{S}_{cB}(r^{k+1}) + e_p^{k} \label{drcycleB1} \\
\tilde z^{k+1} &= \frac{1}{c}(r^{k+1} - p^{k+1}) \label{drcycleBdual1},
\end{align}
While the resulting calculations are simpler, they carry the minor
inconvenience that $\{\tilde z^k\}$ converges to $S^{-1} z^*$, where
$z^* \in B(p^*) \cap -A(p^*)$, rather than to such a $z^*$ itself.

Another way to create algorithms essentially identical to those of this
subsection is to use versions of Theorems~\ref{thm:genPPA} and~\ref{thm:genDR}
formulated for abstract Hilbert spaces and then apply them to an
$n$-dimensional space whose inner product is $\inner{x}{y}_S \doteq
x\transpose S y$ and whose norm is consequently $\smallnorm{x}_S \doteq
\sqrt{x\transpose S x}$ (monotone operators over $\real^n$ can also be shown
to be monotone over this space).  Here, we instead use the change-of-variables
approach in $\real^n$ since it requires less abstraction. The requisite proofs
above are somewhat long and mechanical, but much shorter than re-proving
convergence from first principles, while needing less background than the
general-Hilbert-space approach. The change-of-variables techniques is also
used, for example, in~\cite{Eck94c}.

\section{Objective gaps and partial strong convexity}
\label{sec:partialStrong}
The following simple but apparently new results are key elements in the ensuing
analysis:

\begin{lemma}[Partial strong convexity] \label{lem:partialStrongConvexity}
Let $h:\real^n\to\real\cup\{+\infty\}$ be a closed proper convex function, $A$
be an $m \times n$ matrix, $\sigma > 0$ be any positive scalar, and define the
function $\phi : \real^n\to\real\cup\{+\infty\}$ by $\phi(x) \doteq h(x) +
\frac{\sigma}{2} \norm{Ax}^2$. Then,
\index{Partial strong convexity}
\index{Strong convexity!Partial|see{Partial strong convexity}}
\index{Strong convexity|see{Strongly convex function}}
\begin{equation} \label{partialstrongsubgrad}
\big(\forall\,x,x'\in\real^n, w\in\partial \phi(x)\big) \qquad\quad
\phi(x') \geq \phi(x) + \inner{w}{x'-x} + \frac{\sigma}{2}\norm{Ax' - Ax}^2.
\end{equation}
In particular, if $x^*\in\real^n$ minimizes $g$, then
\begin{equation} \label{partialstrongopt}
(\forall\,x'\in\real^n) \qquad\quad
\phi(x') \geq \phi(x^*) + \frac{\sigma}{2} \norm{Ax' - Ax^*}^2.
\end{equation}
\end{lemma}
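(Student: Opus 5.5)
The plan is to decompose $\partial\phi$ and combine the subgradient inequality for $h$ with an exact quadratic identity. Write $q(x) \doteq \frac{\sigma}{2}\norm{Ax}^2$. This function is convex, finite everywhere and differentiable, with $\nabla q(x) = \sigma A\transpose A x$. Because $q$ is finite and continuous on all of $\real^n$, the subdifferential sum rule applies with no qualification beyond $\dom q = \real^n$, so $\partial\phi(x) = \partial h(x) + \sigma A\transpose A x$. Hence any $w \in \partial\phi(x)$ can be written as $w = u + \sigma A\transpose A x$ with $u \in \partial h(x)$. In particular $\partial\phi(x)\neq\emptyset$ forces $x\in\dom h$, so $\phi(x)$ is finite and the inequality is meaningful. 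If $\partial\phi(x)$ is empty, the claim is vacuous.

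Next comes the algebraic identity for the quadratic part. For any $x,x'$,
\begin{equation*}
q(x') = q(x) + \inner{\sigma A\transpose A x}{x'-x} + \frac{\sigma}{2}\norm{Ax'-Ax}^2 .
\end{equation*}
This follows by expanding $\norm{Ax'}^2 = \norm{Ax + A(x'-x)}^2$. It is where the ``partial'' strong convexity comes from: the quadratic has curvature only along $A$. Adding it to the subgradient inequality $h(x') \geq h(x) + \inner{u}{x'-x}$ gives \eqref{partialstrongsubgrad} directly. If $h(x') = +\infty$, the inequality holds trivially.

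For \eqref{partialstrongopt}, I read ``minimizes $g$'' as ``minimizes $\phi$'', which is clearly a typo. If $x^*$ minimizes $\phi$ (with $\phi$ proper, so the minimum is finite), then $0 \in \partial\phi(x^*)$. Substituting $w = 0$ and $x = x^*$ into \eqref{partialstrongsubgrad} yields the claim.

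The only step requiring care is the subdifferential sum rule. I would justify it by citing the standard result (Rockafellar, Theorem 23.8) that holds when one summand is finite everywhere, here $q$. Alternatively, I could avoid the sum rule entirely. Starting from $w\in\partial\phi(x)$, the subgradient inequality for $\phi$ at points $x + t(x'-x)$, combined with the quadratic identity and letting $t\downarrow 0$, shows that $w - \sigma A\transpose A x \in \partial h(x)$. With that established, the rest is mechanical.
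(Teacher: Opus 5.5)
Your proposal is correct and follows the paper's proof essentially step for step. Like the paper, you write $w\in\partial\phi(x)$ as $u+\sigma A\transpose Ax$ with $u\in\partial h(x)$ via Rockafellar's Theorem 23.8, add the subgradient inequality for $h$ to the exact expansion of $\frac{\sigma}{2}\norm{Ax'}^2$ about $x$, and take $w=0$ at a minimizer, correctly reading $g$ as $\phi$. Your quadratic identity, with the term $\inner{\sigma A\transpose Ax}{x'-x}$, has the signs right; the paper's written derivation contains sign slips, for example in its expansion of $\norm{Ax'}^2$.
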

\begin{proof}
Since $q(x) \doteq \frac{\sigma}{2} \norm{Ax}^2$ is differentiable and defined
everywhere, $\partial \phi(x) = \partial(h + q)(x) = \partial h(x) + \nabla
q(x) = \partial h(x) + \sigma A\transpose Ax$ for all $x\in\real^n$ by
standard results in convex analysis, for example~\cite[Theorems 23.8 and
25.1]{Roc70book}.  To establish~\eqref{partialstrongsubgrad}, choose any
$x,x'\in\real^n$ and $w\in\partial \phi(x)$.  Since $w \in \partial \phi(x) =
\partial h(x) + \nabla q(x) = \partial h(x) + \sigma A\transpose Ax$, it
follows that $y\doteq w - \sigma A\transpose Ax \in \partial h(x)$.  By the
definition of subgradients, one has
\begin{align}
h(x') &\geq h(x) + \inner{y}{x'-x} \nonumber \\
&= h(x) + \inner{w - \sigma A\transpose Ax}{x-x'} \nonumber \\
&= h(x) + \inner{w}{x-x'} - \inner{\sigma A\transpose Ax}{x-x'} \nonumber \\
&= h(x) + \inner{w}{x-x'} - \sigma \inner{Ax}{Ax-Ax'}. \label{subgradwithquad}
\end{align}
On the other hand, simple quadratic expansion yields
\begin{align}
\norm{Ax'}^2 
&= \norm{Ax + (Ax - Ax')}^2 \nonumber \\
&= \norm{Ax}^2 + 2 \inner{Ax}{Ax - Ax'} + \norm{Ax - Ax'}^2 \label{quadexpand}
\end{align}
Adding $\sigma/2$ times~\eqref{quadexpand} to~\eqref{subgradwithquad} yields
\begin{align*}
h(x') + \frac{\sigma}{2}\norm{Ax'}^2 
&\geq h(x) + \inner{w}{x-x'} - \sigma \inner{Ax}{Ax-Ax'} \\
& \qquad
+ \frac{\sigma}{2} \norm{Ax}^2 + \sigma \inner{Ax}{Ax - Ax'} 
+ \frac{\sigma}{2}\norm{Ax - Ax'}^2 \\
&= h(x) + \frac{\sigma}{2} \norm{Ax}^2 + \inner{w}{x-x'} 
           + \frac{\sigma}{2}\norm{Ax - Ax'}^2,
\end{align*}
which from the definition of $\phi$ is equivalent to
\begin{equation*}
\phi(x') \geq \phi(x) + \inner{w}{x-x'} + \frac{\sigma}{2}\norm{Ax - Ax'}^2.
\end{equation*}
Since the choices of $x,x'\in\real^n$ and $w\in\partial \phi(x)$ were arbitrary,
\eqref{partialstrongsubgrad} has been established.

Now suppose that $x^*$ minimizes $g$, meaning that $0\in\partial \phi(x^*)$.  One
may therefore take $x=x^*$ and $w=0$ in~\eqref{partialstrongsubgrad},
resulting in~\eqref{partialstrongopt}.
\end{proof}

\begin{lemma} \label{lem:partialStrongDistance}
Let $h:\real^n\to\real\cup\{+\infty\}$ be a closed proper convex function, $A$
be an $m \times n$ matrix, $\sigma > 0$ be any positive scalar, and define the
function $\phi:\real^n\to\real\cup\{+\infty\}$ by $\phi(x) \doteq h(x) +
\frac{\sigma}{2} \norm{Ax}^2$. Then, if $x^*\in\real^n$ is a minimizer of $\phi$,
\begin{equation} \label{partialstrongdistance}
(\forall\,x'\in\real^n) \qquad 
\norm{Ax' - Ax^*} \leq \sqrt{\frac{2}{\sigma}\big(\phi(x') - \phi(x^*)\big)}.
\end{equation}
\end{lemma}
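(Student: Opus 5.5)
This is a direct consequence of Lemma~\ref{lem:partialStrongConvexity}, specifically of inequality~\eqref{partialstrongopt}. The plan is to fix an arbitrary $x'\in\real^n$ and apply that inequality with $x^*$ the given minimizer of $\phi$. Since the hypotheses on $h$, $A$, $\sigma$, and $\phi$ are identical to those of Lemma~\ref{lem:partialStrongConvexity}, this gives
\begin{equation*}
\phi(x') - \phi(x^*) \geq \frac{\sigma}{2}\norm{Ax' - Ax^*}^2 .
\end{equation*}

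Before rearranging, I would check that the quantities involved make sense. Because $h$ is proper and $x^*$ minimizes $\phi$, the value $\phi(x^*)$ is finite. The difference $\phi(x') - \phi(x^*)$ therefore lies in $[0,+\infty]$, so the square root on the right-hand side of~\eqref{partialstrongdistance} is well defined, taking the convention $\sqrt{+\infty} = +\infty$.

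There are then two cases. If $\phi(x') = +\infty$, the right-hand side of~\eqref{partialstrongdistance} is $+\infty$ and the inequality holds trivially. Otherwise, multiply the displayed inequality by $2/\sigma > 0$ to get $\norm{Ax'-Ax^*}^2 \leq \frac{2}{\sigma}\big(\phi(x')-\phi(x^*)\big)$. Taking square roots, which is legitimate because both sides are nonnegative and $t\mapsto\sqrt{t}$ is monotone on $[0,\infty)$, yields~\eqref{partialstrongdistance}.

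I do not expect a real obstacle here. The only point needing care is the extended-value case $\phi(x') = +\infty$, which the case split above handles.
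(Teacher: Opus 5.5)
Your proposal is correct and takes the same approach as the paper: fix $x'$, apply~\eqref{partialstrongopt} from Lemma~\ref{lem:partialStrongConvexity}, multiply by $2/\sigma$, and take square roots. Your checks that $\phi(x^*)$ is finite and that the case $\phi(x')=+\infty$ holds trivially add rigor that the paper's proof leaves implicit.
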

\begin{proof}
Fix any $x'\in\real^n$. The previous lemma asserts
that~\eqref{partialstrongopt} holds, and it may be rearranged into
\begin{align*}
&&
\phi(x') - \phi(x^*) &\geq \frac{\sigma}{2} \norm{Ax' - Ax^*}^2 &&&& \\
\Leftrightarrow && \frac{2}{\sigma}\big(\phi(x') - \phi(x^*)\big) &\geq \norm{Ax' - Ax^*}^2 \\
\Leftrightarrow && \sqrt{\frac{2}{\sigma}\big(\phi(x') - \phi(x^*)\big)} &\geq \norm{Ax' - Ax^*},
\end{align*}
which, taking into account that $x'\in\real^n$ was arbitrary, is equivalent
to~\eqref{partialstrongdistance}.
\end{proof}

Lemma~\ref{lem:partialStrongDistance} will be used in the analysis of the
approximate ALM and ADMM algorithms proposed in the next two sections.  The
following notation will simplify the descriptions of those methods:

\begin{definition} \label{def:approxmin}
\index{Approximate minimizer} For any $\delta > 0$, set $S$, and function $h :
S \to \real \cup \{\pm\infty\}$, define
\begin{equation}
\approxmin{\delta}{x\in S} \! \big\{ h(x) \big\}
   \doteq \set{x\in S}
          {h(x) \leq \left(\inf_{x\in S} h(x)\right) + \delta},
\end{equation}
that is, all $x\in S$ that come within $\delta$ of minimizing $h$ over $S$, as
measured by objective value.
\end{definition}

\section{Objective-gap inexact augmented Lagrangian methods}
\label{sec:alms}
\subsection{Parametric duality framework}
Consider a generic convex optimization problem formulated according to
Rockafellar's parametric duality framework as found in~\cite{Roc74}
or~\cite[Section 19.2]{BauComBook}: let $F:\real^{n+m} \to \real \cup
\{+\infty\}$ be a closed proper convex function, and let the primal problem be
to minimize $F(x,0)$ over $x\in\real^n$.  The corresponding dual problem is to
maximize $-F^*(0,p)$ over $p\in\real^m$, where the $*$ denotes the convex
conjugate operation. Defining the parametric value function $\varphi(u) \doteq
\inf_{x\in\real^n} \big\{ F(x,u) \big\}$ and using the definition of the
convex conjugate, another way of expressing the dual problem is to minimize
the following convex function of $p\in\real^m$:
\begin{multline} 
F^*(0,p) \doteq \sup_{\substack{x\in \real^n \\ u \in \real^m}}
                   \big\{ \inner{u}{p} - F(x,u) \big\} 
     = \sup_{u\in \real^m}
           \Big\{ \sup_{x\in\real^n} \big\{ \inner{u}{p} - F(x,u) \big\} \Big\}
          \\
     = \sup_{u\in \real^m}
           \Big\{ \inner{u}{p} - \inf_{x\in\real^n} \big\{ F(x,u) \big\} \Big\}    
     = \sup_{u\in \real^m}
           \big\{ \inner{u}{p} - \varphi(u) \big\}
     = \varphi^*(p). \label{varphiconj}
\end{multline}
As originally shown in~\cite{Roc74}, careful choice of $F$ allows the
superficially simple problem formulation $\min_{x\in\real^n}\set{F(x,u)}{u=0}$
to model essentially any convex optimization problem and its dual.
Frequently, although it is not required, $F$ is chosen so that minimizing
$F(x,u)$ over $u$ is straightforward for any fixed choice of $x$. For example,
to model the simple equality-constrained problem $\min_{x\in\real^n}
\set{f(x)}{Mx=b}$, where $f:\real^n\to\real\cup\{+\infty\}$ is closed proper
convex, $M$ is and $m \times n$ real matrix, and $b\in\real^m$, it is
customary to choose
\begin{equation} \label{Axeqbparam}
F(x,u) = 
\begin{cases}
f(x), & \text{if~} Mx + u = b \\
+\infty, & \text{otherwise}.
\end{cases}
\end{equation}
Once $x$ is determined, the unique minimizing value of $u$ is $u = b - Mx$,
since any other choice results in $F(x,u) = +\infty$.

\subsection{Algorithm analysis}
Augmented Lagrangian algorithms, as first established in~\cite{Roc76a}, are
obtained by applying the proximal point algorithm to the subgradient map
$\partial \varphi^*$ of the dual function $\varphi^*$.  The analysis here 
applies the preconditioned proximal point algorithm of
Proposition~\ref{prop:genDRrs}, meaning that it uses the recursion
\begin{equation} \label{dualrecursion1}
(\forall\,k \geq 0) \quad
p^{k+1} = \nu_k \big( \sprox{S}_{c_k \partial \varphi^*}(p^k)\big) + (1-\nu_k) p^k + e^k,
\end{equation}
which is the recursion~\eqref{genppars} of Proposition~\ref{prop:genDRrs} with
$T=\partial \varphi^*$ and the iterate sequence being $\{p^k\} \subset
\real^m$ instead of $\{x^k\} \subset \real^n$.  The following result shows how
to exactly calculate $\sprox{S}_{c_k \partial \varphi^*}(p^k)$:

\begin{lemma}\label{lem:dualProx}
Let $F:\real^{n+m}\to\real\cup\{+\infty\}$ be closed proper convex, let $S$ be
a symmetric positive definite $m\times m$ matrix, and let $\varphi(u) \doteq
\inf_{x\in\real^n} \big\{ F(x,u) \big\}$.  Then, for any
$r\in\real^m$ and scalar $c > 0$, the vector $\dot p \doteq \sprox{S}_{c
\partial \varphi^*}(r)$ may be computed by
\begin{align}
(\dot x, \dot u) &\in 
\argmin_{\substack{x\in\real^n \\ u\in\real^m}} 
   \left\{F(x,u) - \inner{r}{u} + \frac{c}{2}\norm{u}^2_{S^{-1}} \right\} 
   \label{absauglag1} \\
\dot p &= r - c S^{-1} \dot u \label{absupdate}
\end{align}
if the minimum in~\eqref{absauglag1} is attained.
\end{lemma}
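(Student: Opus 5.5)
The plan is to verify directly that $\dot p$ defined by~\eqref{absupdate} satisfies the defining inclusion of the preconditioned proximal map, namely $S(\dot p - r) + c\,\partial\varphi^*(\dot p) \ni 0$. Substituting~\eqref{absupdate}, we have $S(\dot p - r) = -cS S^{-1}\dot u = -c\dot u$. The inclusion therefore reduces to $c\dot u \in c\,\partial\varphi^*(\dot p)$, that is, $\dot u \in \partial\varphi^*(\dot p)$. So the whole task is to extract this subgradient relation from the optimality of $(\dot x,\dot u)$ in~\eqref{absauglag1}. Uniqueness of the $\sprox{S}$ solution, established before the definition of preconditioned proximal maps, then gives $\dot p = \sprox{S}_{c\partial\varphi^*}(r)$.

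The key step is to show $\dot p \in \partial\varphi(\dot u)$ together with $\varphi(\dot u) = F(\dot x,\dot u)$ finite. First I will show that $(\dot x,\dot u)$ minimizing~\eqref{absauglag1} implies that $\dot u$ minimizes $\varphi(u) - \inner{r}{u} + \frac{c}{2}\norm{u}^2_{S^{-1}}$ over $u$, with $\varphi(\dot u) = F(\dot x,\dot u)$. For any $u$ and $x$, we have $F(\dot x,\dot u) - \inner{r}{\dot u} + \frac c2\norm{\dot u}^2_{S^{-1}} \le F(x,u) - \inner{r}{u} + \frac c2\norm{u}^2_{S^{-1}}$. Taking the infimum over $x$ gives the first claim, and taking $u=\dot u$ gives $F(\dot x,\dot u)\le\varphi(\dot u)\le F(\dot x,\dot u)$. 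Properness of $F$ makes the minimum value finite.

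With this in hand, I will avoid subdifferential sum rules for the possibly nonclosed $\varphi$ and argue directly. I will apply Lemma~\ref{lem:partialStrongConvexity}-style reasoning, or simply a first-order directional argument. For any $u$ and $t\in(0,1]$, convexity of $\varphi$ (as the inf-projection of a convex function) and minimality at $\dot u$ give
\[
t\big(\varphi(u)-\varphi(\dot u)\big) \ge \varphi(\dot u + t(u-\dot u)) - \varphi(\dot u) \ge t\inner{r - cS^{-1}\dot u}{u-\dot u} - \tfrac{c t^2}{2}\norm{u-\dot u}^2_{S^{-1}}.
\]
Dividing by $t$ and letting $t\downarrow 0$ yields $\varphi(u)\ge\varphi(\dot u)+\inner{\dot p}{u-\dot u}$, so $\dot p\in\partial\varphi(\dot u)$.

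The remaining step, and the main subtle point, is converting $\dot p\in\partial\varphi(\dot u)$ into $\dot u\in\partial\varphi^*(\dot p)$ without assuming $\varphi$ is closed. I will use the Fenchel--Young equality: $\dot p\in\partial\varphi(\dot u)$ gives $\varphi^*(\dot p) = \inner{\dot p}{\dot u} - \varphi(\dot u)$. Then, for any $p$, $\varphi^*(p) \ge \inner{p}{\dot u} - \varphi(\dot u) = \varphi^*(\dot p) + \inner{\dot u}{p-\dot p}$, which is exactly $\dot u\in\partial\varphi^*(\dot p)$. This direction holds for any function, closed or not. Here $\varphi^* = F^*(0,\cdot)$ is closed proper convex by~\eqref{varphiconj}, so $\partial\varphi^*$ is maximal monotone and the $\sprox{S}$ map is well defined. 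Combining with the first paragraph completes the argument.
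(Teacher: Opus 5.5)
Your proof is correct and has the same skeleton as the paper's: reduce the joint minimization to minimizing $\varphi(u) - \inner{r}{u} + \frac{c}{2}\norm{u}^2_{S^{-1}}$ over $u$, deduce $\dot p \in \partial\varphi(\dot u)$, convert this to $\dot u \in \partial\varphi^*(\dot p)$, and check that $S(\dot p - r) + c\dot u = 0$. The difference lies in how the two convex-analysis steps are justified. The paper obtains $0 \in \partial\varphi(\dot u) + \nabla a(\dot u)$ from the subdifferential sum rule (Theorem 23.8 of Rockafellar's \emph{Convex Analysis}) and then passes to $\partial\varphi^*$ by citing Theorem 23.5. You instead prove the needed inclusion at $\dot u$ with a one-sided difference-quotient argument that uses only the convexity of $\varphi$ and the minimality of $\dot u$. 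You then get $\dot u \in \partial\varphi^*(\dot p)$ directly from the Fenchel--Young equality. The paper's version is shorter because it leans on citations. Yours is self-contained and makes explicit several points the paper leaves implicit:
\begin{itemize}
\item joint attainment at $(\dot x,\dot u)$ forces $\dot u$ to attain the reduced minimum with $\varphi(\dot u) = F(\dot x,\dot u)$ finite, which the paper's chain of ``$\min$'' equalities passes over;
\item you need no properness or qualification check for a sum rule;
\item closedness of the inf-projection $\varphi$, which can fail in general, plays no role in the implication $\dot p \in \partial\varphi(\dot u) \Rightarrow \dot u \in \partial\varphi^*(\dot p)$.
\end{itemize}
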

\begin{proof}
%
If the minimum in~\eqref{absauglag1} is attained,
\begin{align}
\min_{\substack{x\in\real^n \\ u\in\real^m}} 
   \left\{F(x,u) - \inner{r}{u} + \frac{c}{2}\norm{u}^2_{S^{-1}} \right\}
&=
\min_{u\in\real^m} \left\{
   \min_{x\in\real^n} \left\{F(x,u) - \inner{r}{u} + \frac{c}{2}\norm{u}^2_{S^{-1}} \right\}
\right\} \nonumber \\
&=
\min_{u\in\real^m} \left\{
   \inf_{x\in\real^n} \big\{F(x,u)\big\}- \inner{r}{u} + \frac{c}{2}\norm{u}^2_{S^{-1}} 
\right\} \nonumber \\
&=
\min_{u\in\real^m} \left\{
   \varphi(u) - \inner{r}{u} + \frac{c}{2}\norm{u}^2_{S^{-1}}
\right\}. \label{minabsauglag}
\end{align}
Define
\begin{align*}
a(u) &\doteq - \inner{r}{u} + \frac{c}{2}\norm{u}^2_{S^{-1}} &
b(u) &\doteq \varphi(u) + a(u),
\end{align*}
so that $b(u)$ is the function of $u$ being minimized
in~\eqref{minabsauglag}. A necessary and sufficient condition for $\dot u$ to
be attain the minimum in~\eqref{minabsauglag} is to have $0 \in
\partial b(\dot u)$.  Since the convex function $a$ is differentiable and
defined everywhere, one has $\partial b(u) = \partial\varphi(u) + \nabla a(u)$
for all $u\in \real^m$ (for example, using~\cite[Proposition 4.2.2]{BerConvex}
to obtain $\partial a(u) = \{ \nabla a(u) \}$ and~\cite[Theorem
23.8]{Roc70book} to obtain $\partial b(u)=\partial\varphi(u) + \nabla a(u)$
for all $u\in\real^m$).  Therefore, at the minimizer $\dot u$
of~\eqref{minabsauglag}, one has
\begin{equation*}
0 \in \partial b(\dot u) = \partial\varphi(\dot u) + \nabla a(\dot u)
= \partial\varphi(\dot u) - r + c S^{-1} \dot u,
\end{equation*}
and hence $\dot p = r - c S^{-1} \dot u \in \partial\varphi(\dot u)$.  By
the standard duality relations for convex conjugates --- see for
example~\cite[Theorem 23.5]{Roc70book} --- it follows that $\dot u \in
\partial\varphi^*(\dot p)$.  Then, 
\begin{align*}
&&&&
\dot p + c S^{-1} \dot u &= r - c S^{-1} \dot u + c S^{-1}\dot u = r 
&& \Leftrightarrow &
S \dot p + c \dot u &= Sr 
&& \Leftrightarrow &
S (\dot p - r) + c \dot u = 0, &&
\end{align*}
which, along with $\dot u \in \partial \varphi^*(\dot p)$, means that $\dot p =
\sprox{S}_{c\partial \varphi^*}(r)$.
\end{proof}

To avoid a detour into further technicalities, it will be assumed throughout
that minimizers of expressions like~\eqref{absauglag1} exist.

The following proposition formulates and proves convergence of a general
inexact augmented Lagrangian method, drawing on
Proposition~\ref{prop:genPPArs} in its analysis.  To make the algorithm
statement marginally more readable, it replaces $S^{-1}$ in the above lemma
with an arbitrary symmetric positive definite matrix $W$.

\begin{proposition}
\label{prop:absALMapproxObj}
Suppose $F: \real^{n+m} \to \real \cup \{+\infty\}$ is closed proper convex, 
let $p^0 \in \real^m$ be arbitrary, and suppose that
\begin{enumerate}[label={(\roman*)}, nosep]
\item $\{c_k\}_{k=0}^\infty \subset \real_{++}$ a sequence of positive scalars
with $\inf_{k\geq 0} \{ c_k \} > 0$
\item $\{\nu_k\}_{k=0}^\infty$ is a sequence of real numbers with $\inf_{k
\geq 0} \{\nu_k\} > 0$ and $\sup_{k \geq 0}
\{\nu_k\} < 2$
\item \label{item:absALMapprox:errorAssump} $\{\delta_k\}_{k=0}^\infty \subset
\real_+$ is a sequence of nonnegative numbers such that $\sum_{k=0}^\infty
\sqrt{c_k \delta_k} < \infty$
\item $W$ is any positive definite symmetric $m \times m$ matrix.
\end{enumerate}
Further suppose that the sequences $\{p^k\}_{k=0}^\infty, \{u^k\}_{k=1}^\infty
\subset \real^m$ and $\{ x^k \}_{k=1}^\infty \subset \real^n$ evolve according
to the following recursions for all $k \geq 0$~:
\begin{align}
(x^{k+1},u^{k+1}) &\in
   \approxmin{\delta_k}{\substack{x\in\real^n \\ u\in\real^m}}
      \left\{ F(x,u) - \inner{p^k}{u} + \frac{c_k}{2}\norm{u}^2_W \right\}
   \label{absalmobjapprox} \\
p^{k+1} &= p^k - \nu_k c_k W u^{k+1}, \label{absalsoamupdate}
\end{align}
where the exact minimum in~\eqref{absalmobjapprox} is assumed to be attainable
for all $k$. 

If there exists any optimal solution to the dual problem of minimizing the
dual function $\varphi^*$ identified in~\eqref{varphiconj}, then $\{p^k\}$
converges to some such minimizer, and furthermore
\begin{align}
\lim_{k\to\infty} c_k u^{k+1} &= 0 \label{absalmasympfeasck2} \\
\lim_{k\to\infty} u^k &= 0 \label{absalmasympfeas2} \\     
\limsup_{k\to\infty} F(x^k,u^k) &\leq \inf\set{F(x,0)}{x\in\real^n}. \label{absalmasympopt2}
\end{align}
Furthermore, every limit point of the sequence $\{x^k\}$ is an optimal
solution of the primal problem $\min_{x\in\real^n} \big\{ F(x,0) \big\}$.

If no minimizers of the dual function $\varphi^*$ exist, then $\{p^k\}$ is an
unbounded sequence.
\end{proposition}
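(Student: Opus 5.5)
The plan is to recast the recursion as an instance of Proposition~\ref{prop:genPPArs} with $T = \partial\varphi^*$ and $S \doteq W^{-1}$, so that $\norm{u}^2_W = \norm{u}^2_{S^{-1}}$. For each $k$, let $(\dot x^{k+1},\dot u^{k+1})$ be an exact minimizer of the subproblem in~\eqref{absalmobjapprox}, which is assumed to exist. By Lemma~\ref{lem:dualProx} with $r = p^k$ and $c = c_k$, the vector $\dot p^{k+1} \doteq p^k - c_k W \dot u^{k+1}$ equals $\sprox{S}_{c_k\partial\varphi^*}(p^k)$. The actual update~\eqref{absalsoamupdate} can then be written as
\begin{equation*}
p^{k+1} = \nu_k \dot p^{k+1} + (1-\nu_k)p^k + e^k,
\qquad
e^k \doteq -\nu_k c_k W\big(u^{k+1} - \dot u^{k+1}\big).
\end{equation*}
Everything then reduces to showing that $\{\norm{e^k}\}$ is summable.

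For the summability, I would apply Lemma~\ref{lem:partialStrongDistance} to the subproblem objective, viewed as a function of the stacked variable $(x,u)$. Take $h(x,u) \doteq F(x,u) - \inner{p^k}{u}$, which is closed proper convex, $\sigma = c_k$, and the matrix $A = [\,0 \;\; W^{1/2}\,]$, so that $\frac{c_k}{2}\norm{A(x,u)}^2 = \frac{c_k}{2}\norm{u}_W^2$. Since $(x^{k+1},u^{k+1})$ is $\delta_k$-optimal, the lemma gives
\begin{equation*}
\norm{W^{1/2}(u^{k+1}-\dot u^{k+1})} \leq \sqrt{2\delta_k/c_k}.
\end{equation*}
Consequently
\begin{equation*}
\norm{e^k} \leq 2 c_k \norm{W^{1/2}}\sqrt{2\delta_k/c_k} = 2\sqrt2\,\norm{W}^{1/2}\sqrt{c_k\delta_k},
\end{equation*}
which is summable by assumption~\ref{item:absALMapprox:errorAssump}. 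Proposition~\ref{prop:genPPArs} then yields convergence of $\{p^k\}$ to a minimizer of $\varphi^*$ (a root of $\partial\varphi^*$) when one exists, and unboundedness of $\{p^k\}$ otherwise.

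For the asymptotic claims, note that~\eqref{absalsoamupdate} gives $c_k W u^{k+1} = (p^k - p^{k+1})/\nu_k$. Convergence of $\{p^k\}$ together with $\inf_k\nu_k>0$ gives~\eqref{absalmasympfeasck2}. Then~\eqref{absalmasympfeas2} follows from $\inf_k c_k>0$.

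For~\eqref{absalmasympopt2}, I would compare the objective at $(x^{k+1},u^{k+1})$ with its value at $(x,0)$ for any fixed $x$. The $\delta_k$-optimality gives
\begin{equation*}
F(x^{k+1},u^{k+1}) \leq F(x,0) + \inner{p^k}{u^{k+1}} - \tfrac{c_k}{2}\norm{u^{k+1}}_W^2 + \delta_k \leq F(x,0) + \inner{p^k}{u^{k+1}} + \delta_k.
\end{equation*}
Since $\{p^k\}$ is bounded, $u^{k+1}\to0$, and $\delta_k\to0$ (because $c_k\delta_k\to0$ and $c_k$ is bounded below), taking $\limsup$ and then the infimum over $x$ gives the bound. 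For limit points $\bar x$ of $\{x^k\}$, take a subsequence along which $x^k\to\bar x$. Then $(x^k,u^k)\to(\bar x,0)$, and lower semicontinuity of the closed function $F$ gives
\begin{equation*}
F(\bar x,0) \leq \liminf F(x^k,u^k) \leq \inf_x F(x,0),
\end{equation*}
so $\bar x$ is primal optimal.

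I expect the main obstacle to be the error-bound step. The objective gap gives no control over $x$, so the key point is choosing $A$ to pick out only the $u$-component and tracking the factor $c_k$ correctly to get $\sqrt{c_k\delta_k}$. A minor subtlety is the case $\inf_x F(x,0) = -\infty$ or an empty primal problem: if the infimum is $+\infty$ the inequalities are trivial, and if it is $-\infty$ then no dual solution exists, so that case is not covered by the convergence part.
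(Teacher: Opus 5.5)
Your proof is correct and follows essentially the same route as the paper. You recast the update as the preconditioned proximal point recursion of Proposition~\ref{prop:genPPArs} with $S = W^{-1}$, take the exact step from Lemma~\ref{lem:dualProx}, bound the multiplier error via Lemma~\ref{lem:partialStrongDistance} applied only to the $u$-block, and finish with the standard augmented Lagrangian limit arguments. Two of your details are in fact more careful than the paper's write-up: the choice $A = [\,0 \;\; W^{1/2}\,]$, which costs only a harmless factor $\norm{W}^{1/2}$, and keeping $\nu_k$ when deducing $c_k W u^{k+1} \to 0$. The paper instead uses $A = [\,0 \;\; W\,]$, so that $\norm{A(x,u)}^2 = \norm{Wu}^2$ rather than $\norm{u}_W^2$, and it drops $\nu_k$ from $p^k - p^{k+1}$.
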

\begin{proof}
The proof hinges on showing that 
\begin{equation} \label{dualrecursion2}
(\forall\,k \geq 0) \quad
p^{k+1} = \nu_k \cdot \sprox{(W^{-1})}_{c_k \partial \varphi^*}(p^k) 
                             + (1-\nu_k) p^k + e^k,
\end{equation}
where $\{e^k\} \subset \real^m$ is such that $\big\{\smallnorm{e^k}\big\}$
forms a summable sequence.  This is a form of the generalized proximal
point recursion~\eqref{genppars} of Proposition~\ref{prop:genPPArs}, with $S$
replaced by $W^{-1}$.  For the remainder of this proof, define $S \doteq
W^{-1}$.

For all $k\geq 0$, let $(\dot x^k, \dot u^k)$ denote some exact minimizer of 
\begin{equation*}
F(x,u) - \inner{p^k}{u} +
\frac{c_k}{2}\norm{u}^2_W = F(x,u) - \inner{p^k}{u} +
\frac{c_k}{2}\norm{u}^2_{S^{-1}},
\end{equation*}
as assumed to exist in~\eqref{absalmobjapprox}.  Let $F_k$ denote the function
$\real^{n+m} \to \real \cup \{+\infty\}$ given by $(x,u) \mapsto F(x,u) -
\inner{p^k}{u} + \frac{c_k}{2}\norm{u}^2_W$.  Then the approximate
minimization of the augmented Lagrangian stipulated in the
condition~\eqref{absalmobjapprox} means that
\begin{equation} \label{absalmcompactapprox}
(\forall\,k \geq 0) \quad
F_k(x^{k+1},u^{k+1}) - F_k(\dot x^k, \dot u^k) \leq \delta_k.
\end{equation}
Let $A = \big[ 0 \;\, W]$, the $(n+m)\times m$ matrix such that
$A(x,u) = Wu$.  Then, defining $G_k : \real^{n+m} \to \real \cup \{+\infty\}$
by $G_k(x,u) = F(x,u) - \inner{p^k}{u}$, one has that $G_k$ is convex and
$F_k(x,u) = G_k(x,u) + \frac{c_k}{2}\norm{A(x,u)}^2$ for all
$(x,u)\in\real^{n+m}$. Then, using Lemma~\ref{lem:partialStrongDistance} with
$\sigma \leftarrow c_k$, $x' \leftarrow (x^{k+1},u^{k+1})$. $x^*
\leftarrow (\dot x^k, \dot u^k)$, and $h \leftarrow F_k$, one has
\begin{align}
(\forall\,k \geq 0) \quad
\norm{Wu^{k+1} - W \dot u^k} 
&= \norm{A(x^{k+1},u^{k+1}) - A(\dot x^k, \dot u^k)} 
  && [\text{by the definition of~}A] \nonumber \\
&\leq \sqrt{\frac{2}{c_k}\big(F_k(x^{k+1},u^{k+1}) - F_k(\dot x^k, \dot u^k)\big)}
  && [\text{using Lemma~\ref{lem:partialStrongDistance}}] \nonumber \\
&\leq \sqrt{{2\delta_k}/{c_k}}
  && [\text{by~\eqref{absalmobjapprox}}].  \label{uerrorsqrt}
\end{align}
Lemma~\ref{lem:dualProx} with $r \leftarrow p^k$, $c \leftarrow c_k$, and $F
\leftarrow F_k$ asserts that $\sprox{S}_{c_k \partial d}(p^k) = p^k - c_k
S^{-1} \dot u^k$, that is, $\sprox{(W^{-1})}_{c_k \partial d}(p^k) = p^k - c_k
W \dot u^k$.  Then, define 
\begin{equation*}
(\forall\,k\geq 0) \quad e^k \doteq \nu_k c_k (W \dot u^k - Wu^{k+1}),
\end{equation*}
so that 
\begin{multline*}
(\forall\,k \geq 0) \quad
\nu_k \cdot \sprox{(W^{-1})}_{c_k \partial d}(p^k) + (1-\nu_k) p^k + e^k \\
  = \nu_k (p^k - c_k W \dot u^k) + (1-\nu_k) p^k + \nu_k c_k (W \dot u^k - Wu^{k+1}) \\
  = \nu_k (p^k - c_k W u^{k+1}) + (1-\nu_k) p^k 
  = p^k - c_k \nu_k W u^{k+1} = p^{k+1}.
\end{multline*}
Therefore, \eqref{dualrecursion2} holds.  Using~\eqref{uerrorsqrt}, one has
\begin{multline*}
(\forall\,k \geq 0) \quad
\norm{e^k} 
= \norm{\nu_k c_k (W u^{k+1} - W \dot u^k)} \\
= \nu_k c_k \norm{W u^{k+1} - W \dot u^k}
\leq \nu_k c_k \sqrt{{2\delta_k}/{c_k}} 
= \nu_k \sqrt{2 c_k \delta_k}
< 2 \sqrt{2 c_k \delta_k},
\end{multline*}
where the final inequality is due to $\nu_k$ being bounded away from $2$.
Since $\sum_{k=0}^\infty {}\sqrt{c_k
\delta_k} < \infty$ by assumption, it follows that
$\big\{\smallnorm{e^k}\big\}$ forms a summable sequence, which together
with~\eqref{dualrecursion2} means that Proposition~\ref{prop:genPPArs} then
asserts that $\{p^k\}$ converges to a minimizer of $\varphi^*$ if one exists,
and is otherwise unbounded.

For the remainder of the proof, consider only the convergent case (from this
point, the reasoning is of a standard nature for augmented Lagrangian methods).
Since $\{p^k\}$ converges,
\begin{align*}
0 &= \lim_{k\to\infty} \{ p^k -  p^{k+1} \}
  = \lim_{k\to\infty} \big \{ p^k - (p^k - c_k W u^{k+1}) \big \}
  = \lim_{k\to\infty} \{ c_k W u^{k+1} \},
\end{align*}
so $c_k W u^{k+1} \to 0$ 
and hence $c_k u^{k+1} \to 0$, since $W$ is nonsingular.  Since $\{c_k\}$ is
bounded away from zero, it follows that $u^{k+1} \to 0$, and
both~\eqref{absalmasympfeasck2} and~\eqref{absalmasympfeas2} hold.
To prove~\eqref{absalmasympopt2}, let 
\begin{equation*}
(\forall\,k\geq 0) \quad
\zeta_k \doteq \min_{\substack{x\in\real^n\\u\in\real^m}}
          \left\{F(x,u) - \inner{p^k}{u} + \frac{c_k}{2}\norm{u}^2_W\right\}.
\end{equation*}
One then has 
\begin{equation*}
(\forall\,k\geq 0) \;\; (\forall\,x'\in\real^n) \qquad  
F(x',0) = F(x',0) - \inner{p^k}{0} + \frac{c_k}{2}\norm{0}^2_W \geq \zeta_k
\end{equation*}
because any $(x',0)$ is a possible choice of $(x,u)$ in the minimand in the
definition of $\zeta_k$.  Since $x'$ can take any value in $\real^n$ in the above
inequality, it follows that $\inf_{x\in\real^n} F(x,0) \geq \zeta_k$ for all $k$.
The definition of $\zeta_k$ and~\eqref{absalmobjapprox} then yield
\begin{equation} \label{almobjsqueeze}
(\forall\, k \geq 0) \quad
F(x^{k+1},u^{k+1}) - \inner{p^k}{u^k} + \frac{c_k}{2}\norm{u^{k+1}}^2_W - \delta_k 
\leq \zeta_k \leq \inf_{x\in\real^n} F(x,0).
\end{equation}
Of the terms on the left of this inequality,
\begin{itemize}
\item $\inner{p^k}{u^k} \to 0$ since $\{p^k\}$ is convergent and $u^k \to 0$.
\item $(c_k/2) \smallnorm{u^{k+1}}^2_W = \half (u^{k+1})\transpose (c_k W
u^{k+1}) \to 0$ since $u^k \to 0$ and $c_k W u^{k+1} \to 0$.
\item $\delta_k \to 0$ since it was assumed $\big\{\sqrt{c_k\delta_k}\big\}$ is summable
and $\{c_k\}$ is bounded away from 0.
\end{itemize}
Taking the limit in~\eqref{almobjsqueeze} then establishes that
\begin{equation*}
\limsup_{k\to\infty} F(x^{k+1},u^{k+1}) \leq \inf_{x\in\real^n} F(x,0),
\end{equation*}
proving~\eqref{absalmasympopt2}.

It remains only to prove the assertion about the limit points of $\{x^k\}$.
Suppose that $x^\infty$ is a limit point of $\{x^k\}$, implying existence
of an infinite set of indices $\mathcal{K}$ such that
$\lim_{k\to\infty,k\in\mathcal{K}} \{ x^k \} = x^\infty$. One then has
\begin{equation*}
F(x^\infty,0) 
\leq \liminf_{\substack{k\to\infty\\k\in\mathcal{K}}} F(x^k,u^k)
\leq \limsup_{\substack{k\to\infty\\k\in\mathcal{K}}} F(x^k,u^k)
\leq \limsup_{k\to\infty} F(x^k,u^k) 
\leq \inf_{x\in\real^n} F(x,0),
\end{equation*}
where the first inequality uses that $F$ is closed and thus lower
semicontinuous, with $u^k \to 0$, and the last inequality follows
from~\eqref{absalmasympopt2}. Therefore, $x^{\infty}$ is an optimal solution
of the primal problem.
\end{proof}

Note that the proof makes no claim that $\{x^k\}$ possesses any limit points,
only that they are optimal if they exist.  Whether limit points of $\{x^k\}$
exist can depend on whether the set of optimal solutions is bounded and the
details of the procedure used to solve the subproblems.  In practice,
subproblem solvers are typically ``warm started'' from last iterate of the
previous subproblem and $\{x^k\}$ tends to converge whenever $\{p^k\}$ does.

\subsection{Applications with subspace constraints}
\label{sec:subspaceCon}
Now consider the class of problems of the form $\min_{x\in\real^n}
\set{f(x)}{Mx \in V}$, where $f:\real^n\to\real\cup\{+\infty\}$ is closed
proper convex, $M$ is an $m \times n$ matrix, and $V$ is a linear subspace of
$\real^m$.  To formulate this problem in the parametric duality framework, set
\begin{equation} \label{subspaceparam}
F(x,u) = 
\begin{cases}
f(x), & \text{if~} Mx + u \in V \\
+\infty, & \text{otherwise}.
\end{cases}
\end{equation}
At any given iteration $k\geq 0$, the
minimization~\eqref{absalmobjapprox} in the augmented Lagrangian
algorithm of Proposition~\ref{prop:absALMapproxObj} takes the following form
for this choice of $F$:
\begin{equation*}
\min_{\substack{x\in\real^n \\ u\in\real^m}}
      \left\{ F(x,u) - \inner{p^k}{u} + \frac{c_k}{2}\norm{u}^2_W \right\}
 = \min_{x\in \real^n}
      \left\{
         f(x) + \min_{u: Mx + u \in V} 
         \left\{ 
            - \inner{p^k}{u} + \frac{c_k}{2}\norm{u}^2_W 
         \right\}
      \right\}.
\end{equation*}
Changing the variable in the inner minimand to $v=Mx+u$, hence $u=v-Mx$, one
may express the same minimum as
\begin{multline*}
\min_{x\in \real^n}
      \left\{
         f(x) + \min_{v \in V} 
         \left\{ 
            - \inner{p^k}{v-Mx} + \frac{c_k}{2}\norm{v-Mx}^2_W 
         \right\}
      \right\}
\\ =
\min_{x\in \real^n}
      \left\{
         f(x) + \inner{p^k}{Mx} + \frac{c_k}{2} \min_{v \in V} 
         \Big\{ 
             - \inner{p^k}{v} + \norm{v-Mx}^2_W 
         \Big\}
      \right\}.
\end{multline*}
Now, assume that $p^k \in V\orthog$ (this property may be shown to be required
for $\varphi^*(p^k)$ to be finite, but for brevity it is simply assumed here).
In this case, the inner product $\inner{p^k}{v}$ in the inner minimand is
always zero, simplifying the overall minimization to
\begin{equation} \label{xvmin}
\min_{x\in \real^n}
      \left\{
         f(x) + \inner{p^k}{Mx} + \frac{c_k}{2} \min_{v \in V} 
         \Big\{ 
             \norm{v-Mx}^2_W 
         \Big\}
      \right\}.
\end{equation}
For any closed convex set $C\in\real^m$ and $z\in\real^m$, define
\begin{align*}
\sdist{W\!}_{C}(z) &\doteq \min_{v\in C} \big\{ \smallnorm{v-z}_W \big\} &
\sproj{W\!}_{C}(z) &\doteq \argmin_{v\in C} \big\{ \smallnorm{v-z}_W \big\},
\end{align*}
which are respectively the distance from $z$ to $C$ and the projection of $z$
onto $C$ in the norm induced by $W$.  The minimization
in~\eqref{xvmin} may then be written
\begin{equation} \label{spalmsubprob}
\min_{x\in \real^n}
      \left\{
         f(x) + \inner{p^k}{Mx} + \frac{c_k}{2} 
             \Big( \sdist{W\!}_V(Mx) \Big)^2
      \right\},
\end{equation}
and, given any choice of $x \in \real^n$, the unique optimal value of $v$
in~\eqref{xvmin} is then $\sproj{W\!}_{V}(Mx)$.  The following proposition
collects some standard facts about $W\!$-projectors onto subspaces:

\begin{proposition} \label{prop:wproj}
Given a linear subspace $V \subseteq \real^m$ and a symmetric positive definite
$m \times m$ matrix~$W$,
\begin{enumerate}[label={(\roman*)}, nosep]
\item \label{item:wproj:linear} $\sproj{W\!}_V$ is a linear map whose matrix
form is $Z \doteq B(B\transpose W B)^{-1}B\transpose W$, where $B$ is any
matrix whose columns form a basis for $V$ (or $Z=0$ if $V = \{0\}$).
\item \label{item:wproj:conj} $\identity - \sproj{W\!}_V = \identity - Z =
\sproj{W\!}_{V\conjspace{W}}$, where $V\conjspace{W}$ denotes the space
$W\!$-conjugate to $V$, that is
\begin{equation} \label{defconjspace}
V\conjspace{W} \doteq \set{w\in\real^n}{(\forall\, v\in V) \;\; w\transpose W v = 0}.
\end{equation}
\item \label{item:wproj:idem}  $Z$ and $\identity-Z$ are idempotent, that is, $Z^2 =
Z$ and $(\identity - Z)^2 = \identity - Z$.
\item \label{item:wproj:selfAdj}  $Z$ and $\identity-Z$ are $W\!$-self-adjoint
matrices, meaning that $Z\transpose W = WZ$ and $(\identity-Z)\transpose W = W(\identity -
Z)$.
\end{enumerate}
\end{proposition}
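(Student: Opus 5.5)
The plan is to characterize $\sproj{W\!}_V(z)$ through the optimality conditions of a strictly convex quadratic program, and then read off all four claims algebraically. For~\ref{item:wproj:linear}, first dispose of the trivial case $V=\{0\}$, where the projection is identically $0$. Otherwise, let $B$ be an $m\times d$ matrix whose columns form a basis of $V$. Every $v\in V$ is then $v=B\alpha$ for a unique $\alpha\in\real^d$, so computing $\sproj{W\!}_V(z)$ amounts to minimizing
\[
\tfrac12\smallnorm{B\alpha-z}_W^2=\tfrac12(B\alpha-z)\transpose W(B\alpha-z)
\]
over $\alpha$. The Hessian of this function is $B\transpose W B$, which is positive definite: $B$ has full column rank and $W\succ 0$, so $\alpha\transpose B\transpose W B\alpha=\smallnorm{B\alpha}_W^2>0$ whenever $\alpha\neq0$. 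The minimizer is therefore unique and given by the normal equations $B\transpose W(B\alpha-z)=0$, that is, $\alpha=(B\transpose WB)^{-1}B\transpose Wz$. Hence $\sproj{W\!}_V(z)=Zz$, which is linear in $z$. I would also record the equivalent characterization, used for the remaining parts: $v=\sproj{W\!}_V(z)$ if and only if $v\in V$ and $z-v\in V\conjspace{W}$.

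Parts~\ref{item:wproj:idem} and~\ref{item:wproj:selfAdj} are direct computations. For idempotence,
\[
Z^2=B(B\transpose WB)^{-1}(B\transpose WB)(B\transpose WB)^{-1}B\transpose W=Z,
\]
and then $(\identity-Z)^2=\identity-2Z+Z^2=\identity-Z$. For self-adjointness, $WZ=WB(B\transpose WB)^{-1}B\transpose W$ is symmetric because $W$ and $(B\transpose WB)^{-1}$ are symmetric, so $Z\transpose W=(WZ)\transpose=WZ$. It follows that $(\identity-Z)\transpose W=W-Z\transpose W=W-WZ=W(\identity-Z)$.

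For~\ref{item:wproj:conj}, I would verify the characterization of $\sproj{W\!}_{V\conjspace{W}}(z)$ with the candidate $w=(\identity-Z)z$. This means checking two things. First, $w\in V\conjspace{W}$: we have $B\transpose W(\identity-Z)z=(B\transpose W-B\transpose W)z=0$, since $B\transpose WZ=B\transpose W$. Second, $z-w=Zz$ must be $W$-conjugate to $V\conjspace{W}$. This holds because $Zz\in V$, and every element of $V\conjspace{W}$ is $W$-conjugate to every element of $V$ by definition and the symmetry of $W$. Uniqueness of the $W$-projection onto a subspace, which follows from the same strict-convexity argument applied to a basis of $V\conjspace{W}$, then gives $\identity-Z=\sproj{W\!}_{V\conjspace{W}}$.

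The only step requiring care is justifying the optimality characterization for a general subspace $U$ (needed for $U=V\conjspace{W}$, where no explicit basis formula is at hand). I would derive it once for an arbitrary subspace $U$ using the normal equations with an arbitrary basis of $U$, since that argument did not depend on which subspace was involved. I would also note the degenerate case $V\conjspace{W}=\{0\}$, which occurs exactly when $V=\real^m$; then $Z=\identity$ and the claim holds trivially.
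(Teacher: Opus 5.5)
Your proposal is correct and takes the same route as the paper. The paper omits the full proof: it sketches exactly your argument for (i), namely solving the normal equations for $\min_s \|Bs-z\|_W^2$ and taking $Bs$, and says the remaining parts follow straightforwardly from (i). Your verification of (ii) through the characterization ``$v \in U$ and $z-v$ is $W$-conjugate to $U$,'' valid for any subspace $U$, is a sound way to fill in the step the paper leaves to the reader.
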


\noindent The full proof is omitted, but~\ref{item:wproj:linear} is easily
established by computing the vector $s$ minimizing $\smallnorm{Bs-z}^2_W$,
after which the projection is $Bs$. The remaining results are straightforward
to confirm from~\ref{item:wproj:linear}.

Returning to~\eqref{xvmin}, the value of $u$ corresponding to any given
choice of $v$ is
\begin{equation*}
u = v - Mx = \sproj{W\!}_{V}(Mx) - Mx = -\big( \sproj{W\!}_{V\conjspace{W}}(Mx) \big).
\end{equation*}
Assuming that $\sproj{W\!}_V(z)$ may be readily computed for any choice of
$z\in\real^m$, one way to find a pair $(x^{k+1}, u^{k+1})$ satisfying the
condition~\eqref{absalmobjapprox} in the inexact augmented Lagrangian method
is therefore to
\begin{align*}
\text{find}&&
x^{k+1} &\in \approxmin{\delta_k}{x\in \real^n}
      \left\{
         f(x) + \inner{p^k}{Mx} + \frac{c_k}{2} 
             \Big( \sdist{W\!}_V(Mx) \Big)^2
      \right\}, && \\
\text{then set} &&
u^{k+1} &= -\big( \sproj{W\!}_{V\conjspace{W}}(Mx^{k+1}) \big). 
\end{align*}
Substituting this form of $u^{k+1}$ into~\eqref{absalsoamupdate} yields the
complete algorithm recursions
\begin{align}
x^{k+1} &\in \approxmin{\delta_k}{x\in \real^n}
      \left\{
         f(x) + \inner{p^k}{Mx} + \frac{c_k}{2} 
             \Big( \sdist{W\!}_V(Mx) \Big)^2
      \right\} \label{lessabsapproxmin} \\
p^{k+1} &= p^k + \nu_k c_k W \big( \sproj{W\!}_{V\conjspace{W}}(Mx^{k+1}) \big). 
     \label{lessabsupdate}
\end{align}
Now, $u^{k+1} \in V\conjspace{W}$, meaning that $v W u^{k+1} = 0$ for all $v\in
V$, so $W u^{k+1} \in V\orthog$.  The vector added to $p^k$ to obtain
$p^{k+1}$ is $\nu_k c_k W u^{k+1}$, which is just $W u^{k+1}$ multiplied by a
scalar, and thus also in $V\orthog$.  Inductively, it is then clear that if
$p^0\in V\orthog$, one will have $p^k \in V\orthog$ for all $k$, and thus the
assumption above that $p^k \in V\orthog$ is justified.

An equivalent form of the algorithm may be derived by using $w^k \doteq W^{-1}
p^k$ to represent each Lagrange multiplier estimate vector $p^k \in V\orthog$.
It is then readily seen that $w^k \in V\conjspace{W}$ for all $k\geq 0$.
Multiplying~\eqref{lessabsupdate} by $W^{-1}$, one then obtains the
equivalent multiplier update recursions
\begin{align*}
&&
W^{-1} p^{k+1} 
  &= W^{-1} p^k + \nu_k c_k W^{-1} W \big( \sproj{W\!}_{V\conjspace{W}}(Mx^{k+1}) \big) \\
\Leftrightarrow &&
w^{k+1} 
  &= w^k + \nu_k c_k \big( \sproj{W\!}_{V\conjspace{W}}(Mx^{k+1}) \big).
\end{align*}
Since $p^k = W w^k$, the inner-product term in~\eqref{lessabsapproxmin} now
becomes $\inner{W w^k}{Mx} = (w^k)\transpose W M x$, for which one may use the
``$W\!$-inner-product'' notation $\inner{w^k}{Mx}_W$, and arrive at the
alternative algorithm formulation
\begin{align}
x^{k+1} &\in \approxmin{\delta_k}{x\in \real^n}
      \left\{
         f(x) + \inner{w^k}{Mx}_W + \frac{c_k}{2} 
             \Big( \sdist{W\!}_V(Mx) \Big)^2
      \right\} \label{lessabswapproxmin} \\
w^{k+1} &= w^k + \nu_k c_k \big( \sproj{W\!}_{V\conjspace{W}}(Mx^{k+1}) \big). 
     \label{lessabswupdate}
\end{align}
This is the form of the algorithm that would be directly obtained from a
derivation in an abstract Hilbert space using the inner product
$\inner{\spcdot}{\spcdot}_W$ and corresponding induced norm
$\smallnorm{\spcdot}_W$.

The algorithm developed below in Section~\ref{sec:stochProgFWALM} requires
calculation of the gradient of (a special case of) the quadratic term
in~\eqref{lessabsapproxmin} and~\eqref{lessabswapproxmin}.  The following
lemma provides the necessary general formula:

\begin{lemma} \label{lem:quadGradient}
Let $V$ be a linear subspace of $\real^m$, $M$ be any $m \times n$ matrix, and
$W$ be an $m \times m$ positive definite symmetric matrix. 
\begin{align*}
(\forall\,x\in \real^m) \qquad
\nabla \! \left[ \frac{1}{2}\Big( \sdist{W\!}_V(Mx) \Big)^2 \right]
  &= M\transpose W \big( \sproj{W\!}_{V\conjspace{W}}(Mx) \big) \\
  &= M\transpose W \big((\identity -\, \sproj{W\!}_V)(Mx) \big).
\end{align*}
\end{lemma}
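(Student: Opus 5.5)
Since $V$ is a linear subspace, Proposition~\ref{prop:wproj}\ref{item:wproj:linear} says that $\sproj{W\!}_V$ is the linear map with matrix $Z$. The plan is therefore to write the squared distance as an explicit quadratic form in $x$ and differentiate it directly.

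By definition, $\sdist{W\!}_V(z)=\smallnorm{z - Zz}_W=\smallnorm{(\identity - Z)z}_W$. So, with $P\doteq \identity - Z$, the function to be differentiated is
\begin{equation*}
\psi(x)\doteq \half\big(\sdist{W\!}_V(Mx)\big)^2 = \half (Mx)\transpose P\transpose W P (Mx).
\end{equation*}
This is a quadratic form $\half x\transpose H x$ with $H \doteq M\transpose P\transpose W P M$, and $H$ is symmetric. Hence $\nabla\psi(x) = Hx = M\transpose P\transpose W P M x$.

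The next step is to simplify $P\transpose W P$, using Proposition~\ref{prop:wproj}. Part~\ref{item:wproj:selfAdj} gives $P\transpose W = WP$, so $P\transpose W P = W P^2$. Part~\ref{item:wproj:idem} gives $P^2 = P$, so $P\transpose W P = WP$. Substituting yields
\begin{equation*}
\nabla\psi(x) = M\transpose W (\identity - Z) M x = M\transpose W\big((\identity - \sproj{W\!}_V)(Mx)\big),
\end{equation*}
which is the second expression in the statement. Part~\ref{item:wproj:conj} identifies $\identity - Z$ with $\sproj{W\!}_{V\conjspace{W}}$, which gives the first expression.

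There is no real obstacle here. The only points needing care are, first, that differentiating $\half x\transpose H x$ requires $H$ to be symmetric, which holds by construction. Second, the quantifier in the statement should range over $x\in\real^n$ rather than $\real^m$, since $x$ is the argument of $M$. The degenerate case $V=\{0\}$ is covered by taking $Z=0$.
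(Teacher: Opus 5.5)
Your proof is correct and follows the paper's argument: both write the squared $W$-distance as the quadratic form $\half x\transpose M\transpose(\identity-Z)\transpose W(\identity-Z)Mx$ and reduce the matrix using the $W$-self-adjointness and idempotence of $\identity-Z$ from Proposition~\ref{prop:wproj}. The paper simplifies the matrix before differentiating rather than after, which makes no difference, and you are right that the quantifier should read $x\in\real^n$, which is what the paper's own proof uses.
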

\begin{proof}
Defining the matrix $Z$ as in Proposition~\ref{prop:wproj}, one
has for any $x\in\real^n$ that
\begin{align}
\frac{1}{2}\Big( \sdist{W\!}_V(Mx) \Big)^2
  &= \frac{1}{2} \norm{(\identity-Z)Mx}_W^2 \nonumber \\
  &= \frac{1}{2} \big((\identity - Z) Mx \big)\transpose W (\identity - Z) Mx \nonumber \\
  &= \frac{1}{2} x\transpose M\transpose (\identity - Z)\transpose W (\identity - Z) Mx. \label{grad3}
\end{align}
Since Proposition~\ref{prop:wproj}\ref{item:wproj:selfAdj} asserts that
$(\identity-Z)\transpose W = W (\identity - Z)$ and Proposition~\ref{prop:wproj}\ref{item:wproj:idem} states that $(\identity - Z)^2 = (\identity - Z)$,
\begin{align*}
M\transpose (\identity - Z)\transpose W (\identity - Z) M 
&= M\transpose W (\identity - Z)^2 M = M \transpose W (\identity - Z) M.
\end{align*}
Substituting the resulting (symmetric) matrix for the one in~\eqref{grad3} and
differentiating,
\begin{align*}
\nabla \! \left[ \frac{1}{2}\Big( \sdist{W\!}_V(Mx) \Big)^2 \right]
&= \nabla \! \left[ 
                 \frac{1}{2} x\transpose 
                 \big( M \transpose W (\identity - Z) M\big) x \right]
= M \transpose W (\identity - Z) M x,
\end{align*}
which, once again referring to Proposition~\ref{prop:wproj}, is equivalent to
both claimed expressions for the gradient.
\end{proof}

\section{Objective-gap inexact ADMMs}
\label{sec:admms}
This section considers variants of the ADMM (alternating direction method of
multipliers) algorithm.  Here, it is most convenient to use
Fenchel-Rockafellar duality~\cite{FenchNotes,RockThesis,RocFenchDual}.  
In this setting, one is given two functions
$f:\real^n \to \real \cup \{+\infty\}$ and $g:\real^m \to \real \cup
\{+\infty\}$, along with an $m\times n$ real matrix $M$.  The corresponding 
primal optimization problem is 
\begin{equation} \label{fenchbase}
\min_{x\in\real^n} \big\{ f(x) + g(Mx) \big\},
\end{equation}
and the corresponding dual problem is
\begin{equation} \label{fencheldual}
\min_{p\in\real^m}\big\{ f^*(-M\transpose p) + g^*(p) \big\},
\end{equation}
where ``$*$'' denotes the convex conjugacy operation as in the previous
section.  Defining the respective primal and dual objective functions
\begin{align}
P(x) &\doteq f(x) + g(Mx) & Q(p) &\doteq f^*(-M\transpose p) + g^*(p)
\end{align}
one has the following standard duality result; a proof may be found, for example,
in~\cite[Chapter 15]{BauComBook}.

\begin{proposition}[Fenchel-Rockafellar strong duality]
\label{prop:fenchobj}
For $x^* \in \real^n$ and $p^* \in \real^m$, the following conditions are equivalent:
\begin{enumerate}[label={(\roman*)}, nosep]
\item \label{prop:fenchobj:nogap} $P(x^*) + Q(p^*) = 0$
\item \label{prop:fenchobj:primalsuff} $-M\transpose p^* \in \partial f(x^*)$ 
and $p^* \in \partial g(Mx^*)$
\item \label{prop:fenchobj:dualsuff} $x^* \in \partial f^*(-M\transpose p^*)$ 
and $Mx^* \in \partial g^*(p^*)$.
\end{enumerate}
Whenever any of these equivalent conditions hold, $x^*$ must be optimal for
the primal problem~\eqref{fenchbase} and $p^*$ must be optimal for
the dual problem~\eqref{fencheldual}.
\end{proposition}

As has been known since the work of Gabay~\cite{Gab83}, the ADMM class of
algorithms is an application of ``Douglas-Rachford'' (DR) splitting methods
for maximal monotone operators~\cite{LioMer79}
to~\eqref{fencheldual}.\footnote{Since Douglas and Rachford~\cite{DouRac56}
only proposed an extremely narrow special case of this algorithm, a more
accurate name might be ``Lions-Mercier splitting.''}  The following
proposition translates this insight to the preconditioned generalized DR
splitting method of Proposition~\ref{prop:genDRrs}.  It 
resembles~\cite[Theorem 8]{EckBer92}, except for the presence of the
preconditioning matrix $W$ and a small but crucial generalization in the
approximation criterion for the $f$ minimization step.

\begin{proposition} \label{prop:convergeADMM}
Consider problem~\eqref{fenchbase}, let $W$ be any
symmetric positive definite $m \times m$ matrix, let
$\{\delta_x^k\}_{k=1}^{\infty} \subset \real_+$ be summable and
$\{d_z^k\}_{k=0}^{\infty}
\subset \real^m$ be such that $\sum_{k=0}^{\infty} \smallnorm{d_z^k} \leq
\infty$, and suppose that $\{\nu_k\}_{k=0}^\infty
\subset \real$ is such that $\inf_k \nu_k > 0$ and $\sup_k \nu_k < 2$.
For any constant scalar $c>0$ and arbitrary given initial $z^0, p^0 \in
\real^m$, suppose that the sequences $\{z^k\}_{k=0}^\infty,
\{p^k\}_{k=0}^\infty \subset \real^m$ and $\{x^k\}_{k=1}^\infty, \{\dot
x^k\}_{k=1}^\infty \subset \real^m$ conform to the following conditions for all
$k\geq 0$:
\begin{align}
&\norm{Mx^{k+1} - M\dot x^{k+1}} \leq \delta_x^k, \text{~where~}
\dot x^{k+1} \in 
   \argmin_{x\in\real^n} 
         \left\{f(x) + \inner{p^k}{Mx}  + \frac{c}{2}\norm{Mx - z^k}^2_W \right\}
         \label{admmx} \\
&z^{k+1} = 
   \argmin_{z\in\real^m} \left\{ 
             g(z) - \inner{p^k}{z} 
               + \frac{c}{2}\norm{\nu_k Mx^{k+1} + (1-\nu_k)z^k-z}^2_W
           \right\} + d_z^{k}
         \label{admmz} \\
&p^{k+1} = p^k + c W \big(\nu_k Mx^{k+1} + (1-\nu_k)z^k - z^{k+1}\big). 
         \label{admmupdate}
\end{align}
If strong duality holds for $f$, $g$, and $M$, then $\{p^k\}$ converges to a
solution $p^*$ of the dual problem $\min_{p\in\real^m}\big\{ f^*(-M\transpose
p) + g^*(p) \big\}$, while $\{z^k\}$ and $\{Mx^k\}$ converge to some
$z^* \in \partial g^*(p^*)$ such that $-z^* \in
\partial\big(f^* \circ (-M\transpose)\big)(p^*)$.  If the regularity condition $\ri
\im \partial f \cap \im M\transpose \neq \emptyset$ also holds, then $z^* =
Mx^*$, where $x^*$ is some solution of the primal problem $\min_{x\in\real^n}
\big\{ f(x) + g(Mx) \big\}$.

If the dual problem has no solution but there exists some $\bar p \in \real^m$ at
which the dual objective is finite, at least one of the sequences $\{p^k\}$ or
$\{z^k\}$ must be unbounded.
\end{proposition}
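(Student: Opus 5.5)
The plan is to show that the recursions \eqref{admmx}--\eqref{admmupdate} are an instance of the preconditioned generalized DR recursions \eqref{drcycleArs}--\eqref{drcycleBdualrs} of Proposition~\ref{prop:genDRrs}. The dual problem \eqref{fencheldual} is the inclusion $0\in A(p)+B(p)$ with $A\doteq\partial\big(f^*\circ(-M\transpose)\big)$ and $B\doteq\partial g^*$, and the preconditioner is $S\doteq W^{-1}$. The iterate roles will be matched as follows: the DR ``$p^k$'' will be the ADMM multiplier $p^k$, and the DR dual variable will be a scaled version of $z^k$ (the $\tilde z$ form \eqref{drcycleA1}--\eqref{drcycleBdual1} is the convenient one, so that $\tilde z^k$ corresponds to $z^k$ up to the factor $c$ and $W$).

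The first step is to prove two ``exact step'' identities, each by the same optimality-condition argument used in Lemma~\ref{lem:dualProx}.

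\textbf{(a) The $x$-step.} If $\dot x^{k+1}$ is an exact minimizer in \eqref{admmx}, then
\[
\dot q^{k+1}\doteq p^k+cW(M\dot x^{k+1}-z^k)
\]
equals $\sprox{(W^{-1})}_{cA}(p^k-cWz^k)$ up to the sign and scaling conventions. To check this, the optimality condition $0\in\partial f(\dot x^{k+1})+M\transpose \dot q^{k+1}$ gives $\dot x^{k+1}\in\partial f^*(-M\transpose\dot q^{k+1})$. Hence $M\dot x^{k+1}\in A(\dot q^{k+1})$, at least when a chain-rule inclusion holds; the inclusion $-M\partial f^*(-M\transpose p)\subseteq\partial(f^*\circ -M\transpose)(p)$ always holds, which suffices. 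Rearranging then yields $W^{-1}(\dot q^{k+1}-v)+cM\dot x^{k+1}=0$ for the appropriate $v$.

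\textbf{(b) The $z$-step.} In the same way, the exact $z$-minimizer $\dot z^{k+1}$ in \eqref{admmz} gives $p^{k+1}$ (computed with $\dot z$) as $\sprox{(W^{-1})}_{cB}(r^{k+1})$, where
\[
r^{k+1}=\nu_k\dot q^{k+1}+(1-\nu_k)p^k+\ldots
\]
The relaxation $\nu_kMx^{k+1}+(1-\nu_k)z^k$ is what reproduces \eqref{drcycleRelaxrs}.

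The second step is the error accounting. The actual iterate uses $x^{k+1}$ rather than $\dot x^{k+1}$, so
\[
q^{k+1}-\dot q^{k+1}=cW(Mx^{k+1}-M\dot x^{k+1}),
\]
whose norm is at most $c\smallnorm{W}\delta_x^k$, which is summable. This is the crucial refinement over \cite{EckBer92}: only $Mx$ needs to be accurate, and that is exactly what enters the dual iterates. The $z$ error $d_z^k$ propagates into $p^{k+1}$ and $z^{k+1}$ linearly, through the factor $cW$ and through the dependence of $r$ on $z^k$. I would define $e_p^k$ and $e_q^k$ explicitly, and then bound them by constant multiples of $\smallnorm{d_z^k}$ and $\delta_x^k$, using the nonexpansiveness of the preconditioned prox in the $S$-norm where an error in $r$ must be pushed through $\sprox{S}_{cB}$. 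I expect this bookkeeping, with the errors re-entering through $r^{k+1}$ and the index conventions, to be the main obstacle. My first attempt would be to fold the $d_z$ error entirely into $e_p^k$, i.e.\ treat the computed $(p^{k+1},z^{k+1})$ as an inexact DR step from exact-prox quantities.

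With this in place, Proposition~\ref{prop:genDRrs} gives convergence of $\{p^k\}$ and $\{q^k\}$ to a root $p^*$ of $A+B$, which is a dual solution, and convergence of the scaled $z^k$ to $z^*\in B(p^*)$ with $-z^*\in A(p^*)$. Since $q^k-p^{k-1}$ is a multiple of $W(Mx^k-z^{k-1})$ plus a vanishing error, $Mx^k\to z^*$. Strong duality is used to guarantee that a root exists. Under $\ri\im\partial f\cap\im M\transpose\neq\emptyset$, the chain rule gives $A(p)=-M\partial f^*(-M\transpose p)$, so $-z^*=-Mx^*$ with $x^*\in\partial f^*(-M\transpose p^*)$. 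Combined with $Mx^*\in\partial g^*(p^*)$, Proposition~\ref{prop:fenchobj}\ref{prop:fenchobj:dualsuff} shows that $x^*$ is primal optimal. Finally, the unbounded case follows directly from the last clause of Proposition~\ref{prop:genDRrs}, translated back through the linear correspondences.
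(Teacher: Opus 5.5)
Your plan is the paper's own proof. It uses the same $S=W^{-1}$, $A=\partial\big(f^*\circ(-M\transpose)\big)$, $B=\partial g^*$ and the same $q^{k+1}\doteq p^k+cW(Mx^{k+1}-z^k)$. It also has the same two exact-step identities from optimality conditions, the same bound $\smallnorm{e_q^k}\le c\norm{W}\delta_x^k$, and your ``first attempt'' of folding $d_z^k$ entirely into $e_p^k$ is exactly what the paper does.

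Three points need fixing.

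\emph{Signs in (a).} The chain-rule inclusion gives $-M\dot x^{k+1}\in A(\dot q^{k+1})$, and the identity is $W^{-1}(\dot q^{k+1}-v)-cM\dot x^{k+1}=0$ with $v=p^k-cWz^k$. Your two slips cancel, so $\dot q^{k+1}=\sprox{S}_{cA}(v)$ is right, but each line is false as written.

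\emph{Maximal monotonicity.} You should state that $A$ is maximal monotone because $f^*\circ(-M\transpose)$ is proper. This is where strong duality, or the point $\bar p$ in the unbounded case, is actually used.

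\emph{The bookkeeping you call the main obstacle.} It disappears once $r^{k+1}$ is formed, as \eqref{drcycleRelaxrs} requires, from the \emph{inexact} $q^{k+1}=\dot q^{k+1}+e_q^k$ rather than from $\dot q^{k+1}$; the $z$-subproblem \eqref{admmz} uses $Mx^{k+1}$, not $M\dot x^{k+1}$. Then:
\begin{itemize}
\item $r^{k+1}=p^k+cW\big(\nu_kMx^{k+1}+(1-\nu_k)z^k\big)$ holds exactly.
\item $\dot p^{k+1}\doteq p^k+cW\big(\nu_kMx^{k+1}+(1-\nu_k)z^k-\dot z^{k+1}\big)=\sprox{S}_{cB}(r^{k+1})$.
\item $e_p^k=p^{k+1}-\dot p^{k+1}=-cWd_z^k$.
\item $\tfrac1cS(r^{k+1}-p^{k+1})=z^{k+1}$ holds exactly with the actual $z^{k+1}$.
\end{itemize}
So in the untilded form \eqref{drcycleBdualrs} the DR dual variable is literally the ADMM $z^k$. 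No error re-enters through $r^{k+1}$: propagation of earlier errors through $p^k,z^k$ is handled inside Proposition~\ref{prop:genDRrs}, which only asks for summable per-step errors.

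Your nonexpansiveness fallback does not fit this error model. With $r^{k+1}$ built from $\dot q^{k+1}$, the DR dual update gives $z^{k+1}-\nu_k(Mx^{k+1}-M\dot x^{k+1})$ instead of $z^{k+1}$. Since \eqref{drcycleBdualrs} has no error term, that offset is carried into every later iteration and accumulates, rather than being a summable per-step error.

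Your argument for $Mx^k\to z^*$ is valid and exact, with no vanishing error term needed: $q^k-p^{k-1}=cW(Mx^k-z^{k-1})\to0$. It is slightly cleaner than the paper's rearrangement of \eqref{admmupdate}.
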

\noindent Before commencing the proof, a few remarks are in order:
\begin{enumerate}
\item It is not necessary in practice to actually compute the exact subproblem
solution $\dot x^{k+1}$ in~\eqref{admmx}.  It is sufficient only to verify
that $x^{k+1}$ is with a distance $\delta_x^k$ of some exact solution, after
applying the linear operator $M$ to both vectors.  Another way of expressing
the same condition is
\begin{equation}
\dist\!\left(Mx^{k+1}, M \argmin_{x\in\real^n} 
         \left\{f(x) + \inner{p^k}{Mx}  + \frac{c}{2}\norm{Mx - z^k}^2 \right\}\right)
\leq \delta_x^k.
\end{equation}
If $\ker M \neq \{0\}$, this condition is weaker than requiring that 
\begin{equation}
\left\{
\dist\!\left(x^{k+1}, \argmin_{x\in\real^n} 
         \left\{f(x) + \inner{p^k}{Mx}  + \frac{c}{2}\norm{Mx - z^k}^2 \right\}\right)
\right\}
\end{equation}
be summable, which is effectively the condition required in~\cite[Theorem
8]{EckBer92}.  The difference between these two conditions, although it may
appear minor, is critical to the analysis of objective-gap-based inexact ADMM
algorithms in Proposition~\ref{prop:convergeADMMog} to follow.

\item Much as observed following Proposition~\ref{prop:genDRrs}, the constant $c$ is
not strictly necessary in \eqref{admmx}-\eqref{admmupdate}.  By replacing $W
\leftarrow cW$ and then $c \leftarrow 1$, the algorithm with $c=1$ can be made
to produce exactly the same iterates as in any $c \neq 1$ case.  Therefore, $c$
is included mainly for historical reasons.
\end{enumerate}

\begin{proof} 
Define the matrix $S \doteq W^{-1}$ and the set-valued operators $A \doteq
\partial\big(f^* \circ (-M\transpose)\big)$ and $B \doteq \partial g^*$.
Since $g$ is proper, $g^*$ is closed proper convex, so $B$ is maximal
monotone.  Similarly, $f^*$ is closed proper convex.  It follows that $f \circ
(-M\transpose)$ is closed and convex.  With regard to $f \circ (-M\transpose)$
being proper, there are two primary situations considered in the hypothesis:
\begin{enumerate}[nosep]
\item Strong duality holds. In this case $Q(p^*)$ must be finite at some optimal
solution $p^*$ of the dual problem, in which case $f^*(-M\transpose p^*)$ must
be finite, and therefore $f^* \circ (-M\transpose)$ is proper.
\item The dual problem has no solution but there exists some $\bar p$ for
which the dual objective $f^*(-M\transpose \bar p) + g^*(\bar p) < \infty$.
Then $f^*(-M\transpose \bar p) < \infty$ and so $f^* \circ (-M\transpose)$ is
proper.
\end{enumerate}
In either case, $f^* \circ (-M\transpose)$ is proper.  Thus, it is closed proper
convex, and so $A = \partial\big(f^* \circ (-M\transpose)\big)$ is maximal
monotone.  In conclusion, $A$ and $B$ are both maximal monotone operators
$\real^n \toset \real^n$ in all situations allowed in the hypothesis.

Continuing, define
\begin{align} \label{admmproofdefqr}
(\forall\,k\geq 0) &&
q^{k+1} &\doteq p^k + c W (Mx^{k+1} - z^k) &
r^{k+1} &\doteq p^k + c W \big(\nu_k Mx^{k+1} + (1-\nu_k)z^k \big).
\end{align}
The core of the proof is to establish that the sequences $\{p^k\}$,
$\{q^k\}$, $\{r^k\}$, and $\{z^k\}$ evolve according to the generalized DR
splitting procedure~\eqref{drcycleArs}-\eqref{drcycleBdualrs},
with choices of $\{e_x^k\}$ and $\{e_z^k\}$ that meet the assumptions of
convergence result in Proposition~\ref{prop:genDRrs}. To begin this process,
define
\begin{equation*}
(\forall\,k\ \geq 0) \quad \dot q^{k+1} \doteq p^k + c W (M\dot x^{k+1} - z^k).
\end{equation*}
The next step is to verify the following claim:
\begin{equation} \label{admmclaim}
(\forall\,k \geq 0) \quad 
   \dot q^{k+1} = \sprox{S}_{cA}(p^k - c W z^k)
                = \sprox{S}_{cA}(p^k - c S^{-1} z^k),
\end{equation}
that is, that $\dot q^{k+1}$ is the exact result of the $\prox$ operation
in~\eqref{drcycleArs}.

To establish the claim, start by fixing any $k \geq 0$. Since $f$ is closed
proper convex and $a_k : x \mapsto \inner{p^k}{Mx} + \frac{c}{2}\norm{Mx -
z^k}_W^2$ is convex and both finite and differentiable everywhere, one has,
much as in the proof of Lemma~\ref{lem:dualProx}, that since $\dot x^{k+1}$
minimizes $f + a_k$, 
\begin{align}
0 &\in \partial f(\dot x^{k+1}) + \nabla a_k(\dot x^{k+1}) \nonumber \\
&=  \partial f(\dot x^{k+1}) + M\transpose p^k 
         + cM\transpose W (M\dot x^{k+1} - z^k)  \nonumber \\
&= \partial f(\dot x^{k+1}) + M\transpose\big(p^k + c W ( M \dot x^{k+1} - z^k)\big) 
         \nonumber \\
&= \partial f(\dot x^{k+1}) + M\transpose \dot q^{k+1}. \label{dualfhappy}
\end{align}
Rearranging, $-M\transpose \dot q^{k+1} \in \partial f(\dot x^{k+1})$, which
by the properties of conjugate functions --- again, see for
example~\cite[Theorem 23.5]{Roc70book} --- means that $\dot x^{k+1} \in
\partial f^*(-M\transpose \dot q^{k+1})$.
Multiplying this inclusion by $-M$ yields
\begin{equation} \label{trickyf}
-M \dot x^{k+1} \in -M \partial f^*(-M\transpose \dot q^{k+1}) 
= (-M\transpose)\transpose \partial f^*(-M\transpose \dot q^{k+1})
\subseteq \partial \big(f^* \circ (-M\transpose)\big)(\dot q^{k+1}),
\end{equation}
where the ``$\subseteq$'' relationship follows from the linear chain rule for
subgradients; see for example~\cite[Theorem 23.9]{Roc70book}.  Furthermore, 
\begin{align*}
S \dot q^{k+1} + c(-M \dot x^{k+1})
   &= S\big((p^k + c W (M\dot x^{k+1} - z^k)\big) + c(-M \dot x^{k+1}) \\
   &= Sp^k + c S W (M\dot x^{k+1} - z^k) - c M \dot x^{k+1} \\
   &= Sp^k + c M\dot x^{k+1} - c z^k - c M \dot x^{k+1} 
        && [\text{since~} S = W^{-1}] \\
   &= Sp^k  - c z^k \\
   &= S(p^k - c S^{-1} z^k).
\end{align*}
This equation, together with $-M \dot x^{k+1} \in \partial \big(f^* \circ
(-M\transpose)\big)(\dot q^{k+1}) = A(\dot q^{k+1})$, means that $q^{k+1} =
\sprox{S}_{cA}(p^k - c S^{-1} z^k)$, matching~\eqref{admmclaim}.  Since the
choice of $k \geq 0$ was arbitrary, \eqref{admmclaim} is verified.

Next, define
\begin{align*}
e_x^{k} &\doteq q^{k+1} -\sprox{S}_{cA}(p^k - c S^{-1} z^k) = q^{k+1} - \dot q^{k+1},
\end{align*}
which by simple algebraic rearrangement implies that~\eqref{drcycleArs} holds.  By
the definitions $q^{k+1}$ and $\dot q^{k+1}$,
\begin{align*}
(\forall\, k\geq 0) \quad
e_x^{k} &= \big(p^k + c W (Mx^{k+1} - z^k)\big) 
                 - \big(p^k + c W (M \dot x^{k+1} - z^k)\big) \\
                 &= c W (Mx^{k+1} - M \dot x^{k+1}),
\end{align*}
and consequently, using~\eqref{admmx},
\begin{equation*}
(\forall\, k\geq 0) \quad 
\norm{e_x^{k}} = \norm{c W (Mx^{k+1} - M \dot x^{k+1})}
   \leq c \norm{W} \norm{Mx^{k+1} - M \dot x^{k+1}}
   \leq c \norm{W} \delta_x^k.
\end{equation*}
Since $\{\delta_x^k\}$ was assumed summable in the hypothesis,
$\big\{\smallnorm{e_x^{k}}\big\}$ is summable.

Next, starting with the definition of $\{r^k\}$, one has
\begin{align*}
(\forall\, k\geq 0) \quad
r^{k+1} &= p^k + c W \big(\nu_k Mx^{k+1} + (1-\nu_k)z^k \big) \\
        &= \nu_k\big(p^k + cW(Mx^{k+1} - z^k)\big) + (1-\nu_k) p^k + cWz^k \\
        &= \nu_k q^{k+1} + (1-\nu_k) p^k + cWz^k \\
        &= \nu_k q^{k+1} + (1-\nu_k) p^k + c S^{-1} z^k,
\end{align*}
which matches~\eqref{drcycleRelaxrs}.

The next step in the core of the proof is to show that~\eqref{drcycleBrs}
holds for some choice of $\{e_z^k\}_{k=1}^\infty$ whose norms form a summable
sequence.  To this end, fix any $k \geq 0$ and define
\begin{align} 
\dot z^{k+1} &\doteq 
   \argmin_{z\in\real^m} \left\{ 
             g(z) - \inner{p^k}{z} 
               + \frac{c}{2}\norm{\nu_k Mx^{k+1} + (1-\nu_k)z^k-z}_W^2
           \right\}
         \label{admmproofexactz} \\
\dot p^{k+1} &\doteq p^k + c W \big(\nu_k Mx^{k+1} + (1-\nu_k) z^k - \dot z^{k+1}\big), 
         \label{admmproofexactp}
\end{align}
the respective values of $z^{k+1}$ and $p^{k+1}$ that would be
computed if $e_z^k = 0$.  Substituting the definition of $\dot z^{k+1}$
into~\eqref{admmz} immediately yields $z^{k+1} = \dot z^{k+1} +
d_z^{k}$ and therefore $d_z^{k} = z^{k+1} - \dot z^{k+1}$.
From the optimality of $\dot z^{k+1}$ in~\eqref{admmproofexactz} and the
definition of $\dot p^{k+1}$ in~\eqref{admmproofexactp},
\begin{equation*}
    0 \in \partial g(\dot z^{k+1}) - p^k + c W \big(\dot z^{k+1} 
                                                       - \nu_k Mx^{k+1} 
                                                       - (1-\nu_k) \dot z^k \big)
            = \partial g(\dot z^{k+1}) - \dot p^{k+1}
\end{equation*}
so $\dot p^{k+1} \in \partial g(\dot z^{k+1})$, and therefore $\dot z^{k+1}
\in \partial g^*(\dot p^{k+1}) = B(\dot p^{k+1})$.  Furthermore, using the definitions of
$\dot z^{k+1}$ and $r^{k+1}$ and that $S$ and $W$ are inverses,
\begin{align*}
S \dot p^{k+1} + c \dot z^{k+1}
  &= S \Big( p^k + c W \big(\nu_k Mx^{k+1} + (1-\nu_k) z^k - \dot z^{k+1}\big)\Big)
        + c \dot z^{k+1} \\
  &= S p^k + c S W \big(\nu_k Mx^{k+1} + (1-\nu_k) z^k\big) - c S W \dot z^{k+1} 
        + c \dot z^{k+1} \\
  &= S p^k + c\big(\nu_k Mx^{k+1} + (1-\nu_k) z^k\big)- c \dot z^{k+1} + c \dot z^{k+1} \\
  &= S p^k + c\big(\nu_k Mx^{k+1} + (1-\nu_k) z^k\big) \\
  &= S \Big(p^k + c W \big(\nu_k Mx^{k+1} + (1-\nu_k) z^k\big)\Big) \\
  &= S r^{k+1},
\end{align*}
which means in conjunction with $\dot z^{k+1} \in B(\dot p^{k+1})$ that $\dot
p^{k+1} = \sprox{S}_{cB}(r^{k+1})$.  Define
\begin{equation*}
e_z^{k} \doteq p^{k+1} - \dot p^{k+1} = p^{k+1} - \sprox{S}_{cB}(r^{k+1}),
\end{equation*}
whereby~\eqref{drcycleBrs} immediately holds.  Then,
substituting for $p^{k+1}$ with~\eqref{admmupdate} and for $\dot p^{k+1}$
with~\eqref{admmproofexactp},
\begin{align*}
\norm{e_z^{k}} &= \norm{p^{k+1} - \dot p^{k+1}} \\
 &= \bigg\|p^k + c W \big(\nu_k Mx^{k+1} + (1-\nu_k)z^k - z^{k+1}\big) \\
 & \qquad \qquad \qquad
    - \Big( p^k + c W\big(\nu_k Mx^{k+1} 
               + (1-\nu_k) z^k - \dot z^{k+1}\big) \Big)\bigg\| \\
&= \norm{c W z^{k+1} - c W \dot z^{k+1})} 
\leq c \norm{W} \norm{z^{k+1} - \dot z^{k+1}} = c \norm{W} \norm{d_z^k}.
\end{align*}
Since this holds for all $k$ and $\big\{ \smallnorm{d_z^k} \big\}$ is summable
the sequence $\big\{\smallnorm{e_z^k}\big\}_{k=1}^\infty$ is also summable.  

It has already been shown that~\eqref{drcycleBrs} holds for arbitrary $k \geq
0$, so the final step in the core of the proof is to
establish~\eqref{drcycleBdualrs}.
From the definitions of $r^{k+1}$ and $p^{k+1}$,
\begin{align*}
(\forall\, k\geq 0) \quad
\frac{1}{c}S(r^{k+1} - p^{k+1})
&=
\frac{1}{c} S \bigg(p^k + c W \big(\nu_k Mx^{k+1} + (1-\nu_k)z^k \big) \\
&\qquad\qquad\qquad
              - \Big( p^k + c W\big(\nu_k Mx^{k+1} + (1-\nu_k)z^k - z^{k+1} \Big)\bigg) \\
&= \frac{1}{c} \cdot c SW z^{k+1} = z^{k+1},
\end{align*}
so~\eqref{drcycleBdualrs} is verified and the core of the proof is complete.

With the definitions in~\eqref{admmproofdefqr}, the analysis up to this point
has established that $\{p^k\}$, $\{q^k\}$, $\{r^k\}$, and $\{z^k\}$ evolve
according to the generalized preconditioned DR splitting
procedure~\eqref{drcycleArs}-\eqref{drcycleBdualrs} for $A =
\partial\big(f^* \circ (-M\transpose)\big)$ and $B = \partial g^*$ both
maximal monotone, with the error sequences $\{e_x^k\}$ and $\{e_z^k\}$ being
norm summable. Proposition~\ref{prop:genDRrs} therefore applies.

If $f$, $g$, and $M$ satisify strong duality, then, consulting
Proposition~\ref{prop:fenchobj}, there exist vectors $p^*
\in \real^m$ and $x^* \in \real^n$ such that
\begin{align*}
-M\transpose p^* &\in \partial f(x^*) &
p^* &\in \partial g(Mx^*),
\end{align*}
or equivalently
\begin{align*}
x^* &\in \partial f^*(-M\transpose p^*) &
Mx^* &\in \partial g^*(p^*).
\end{align*}
In particular, since $\partial\big(f^* \circ (-M\transpose)\big)(p^*)
\supseteq -M \partial f^*(-M\transpose p^*)$ \cite[Theorem 23.9]{Roc70book},
one then has
\begin{align*}
A(p^*) + B(p^*)
&= \partial\big(f^* \circ (-M\transpose)\big)(p^*) + \partial g(p^*) \\
&\supseteq -M \partial f^*(-M\transpose p^*) + \partial g(p^*) \\
&\ni -Mx^* + Mx^* \\
&= 0. 
\end{align*}
Thus, the operator $A + B$ possesses at least one root.  Therefore, the
convergent case of Proposition~\ref{prop:genDRrs} applies, so 
\begin{itemize}[nosep]
\item $\{p^k\}$ and $\{q^k\}$ converge to some $p^* \in \real^m$ with $0 \in
A(p^*) + B(p^*)$.
\item $\{z^k\}$ converges to some $z^* \in B(p^*)$ such that $-z^* \in A(p^*)$.
\end{itemize}
Such a $p^*$ must be a solution to the dual problem.  The assertions about the
convergence of $\{z^k\}$ now follow from the definitions of $A$ and $B$.
Further, rearranging~\eqref{admmupdate} leads to
\begin{equation*}
(\forall\,k \geq 0) \quad p^{k+1} - p^k + c(z^{k+1} - z^k) = c\nu_k(Mx^{k+1} - z^k).
\end{equation*}
Since $\{p^k\}$ and $\{z^k\}$ both converge, it follows that $c\nu_k(Mx^{k+1}
- z^k) \to 0$.  Since $\{\nu_k\}$ is bounded away from zero, it then follows $Mx^{k+1}
- z^k \to 0$, which in conjunction with $z^k \to z^*$ means that $Mx^k \to
  z^*$ as well.  The first set of assertions in the proposition have thus been
  established.

Now assume that both strong duality and the regularity condition $\ri \im \partial f
\cap \im M\transpose \neq \emptyset$ hold.  Since
\begin{equation}
\im \partial f = \dom \partial f^* \supseteq \ri \dom f^*, \label{intermedri}
\end{equation}
where the equation follows for example from~\cite[Corollary 23.5.1]{Roc70book}
and the ``$\supseteq$'' condition from~\cite[Theorem 23.4]{Roc70book}, it
follows that
\begin{align*}
    \ri \dom f^* \cap \im (-M\transpose)
    &= \ri \dom f^* \cap \im M\transpose \\
    &= \ri \ri \dom f^* \cap \im M\transpose \\
    &\supseteq \ri \im \partial f^*  \cap \im M\transpose
       &&[\text{by~\eqref{intermedri}}] \\
    &\neq \emptyset.
\end{align*}
Therefore, \cite[Theorem 23.9]{Roc70book} asserts that
\begin{equation} \label{nicesubgrad}
\partial\big(f^* \circ (-M\transpose)\big)
= (-M\transpose)\transpose \circ \partial f^* \circ (-M\transpose) 
= -M \circ \partial f^* \circ (-M\transpose),
\end{equation}
so it must be possible to express $-z^*
\in A(p^*) = \partial\big(f^* \circ (-M\transpose)\big)(p^*)$ in the form
$-Mx^*$, for some $x^* \in \partial f^*(-M\transpose p^*)$.  From $-z^* =
-Mx^*$, one immediately has $z^* = Mx^*$, and so
\begin{align*}
x^* &\in \partial f^*(-M\transpose p^*) & z^* &= Mx^* \in \partial g^*(p^*).
\end{align*}
The two inclusions above are exactly the conditions in
Proposition~\ref{prop:fenchobj}\ref{prop:fenchobj:dualsuff}, so that
proposition asserts that $x^*$ is a solution to the primal problem.  

Any root of $A + B$ clearly solves the dual problem, so if the dual problem
has no solution, no such roots can exist.  In this case,
Proposition~\ref{prop:genDRrs} asserts that at least one of $\{p^k\}$ or
$\{z^k\}$ is unbounded.
\end{proof}

It is now relatively simple matter to combine Proposition~\ref{prop:genDRrs} with
Lemma~\ref{lem:partialStrongDistance} to produce a version of the generalized
inexact ADMM that uses objective gaps for its subproblem approximation
criteria:

\begin{proposition} \label{prop:convergeADMMog}
Consider problem~\eqref{fenchbase}, let $W$ be any symmetric positive definite
$m \times m$ matrix, let $\{\tau_k^x\}_{k=1}^{\infty},
\{\tau_k^z\}_{k=1}^{\infty} \subset \real_+$ be sequences such that
\begin{align} \label{sqrtsum}
\sum_{i=1}^\infty \sqrt{\tau_k^x} &< \infty &
\sum_{i=1}^\infty \sqrt{\tau_k^z} &< \infty.
\end{align}
Also suppose that $\{\nu_k\}_{k=0}^\infty
\subset \real$ is such that $\inf_k \nu_k > 0$ and $\sup_k \nu_k < 2$.
For any constant scalar $c>0$ and arbitrary given initial $z^0, p^0 \in
\real^m$, suppose that the sequences $\{z^k\}_{k=0}^\infty,
\{p^k\}_{k=0}^\infty \subset \real^m$ and $\{x^k\}_{k=1}^\infty \subset
\real^m$ conform to the following recursions for all $k\geq 0$:
\begin{align}
x^{k+1} &\in 
   \approxmin{\tau_k^x}{x\in\real^n} 
         \left\{f(x) + \inner{p^k}{Mx}  + \frac{c}{2}\norm{Mx - z^k}^2_W \right\}
         \label{admmxog} \\
z^{k+1} &\in 
   \approxmin{\tau_k^z}{z\in\real^m} \left\{ 
             g(z) - \inner{p^k}{z} 
               + \frac{c}{2}\norm{\nu_k Mx^{k+1} + (1-\nu_k)z^k-z}^2_W
           \right\}
         \label{admmzog} \\
p^{k+1} &= p^k + c W \big(\nu_k Mx^{k+1} + (1-\nu_k)z^k - z^{k+1}\big). 
         \label{admmupdateog}
\end{align}
If strong duality holds for $f$, $g$, and $M$, then $\{p^k\}$ converges to a
solution $p^*$ of the dual problem $\min_{p\in\real^m}\big\{ f^*(-M\transpose
p) + g^*(p) \big\}$, while $\{z^k\}$ and $\{Mx^k\}$ converge to some
$z^*\in\real^m$ such that $z^* \in \partial g^*(p^*)$ and $-z^* \in
\partial\big(f^* \circ (-M\transpose)\big)(z^*)$.  If the regularity condition $\ri
\im \partial f \cap \im M\transpose \neq \emptyset$ is also true, then $z^* =
Mx^*$, where $x^*$ is some solution of the primal problem $\min_{x\in\real^n}
\big\{ f(x) + g(Mx) \big\}$, and all limit points of $\{x^k\}$ are
optimal primal solutions.

If the dual problem has no solution but there exists some $\bar p \in \real^m$ at
which the dual objective is finite, at least one of the sequences $\{p^k\}$ or
$\{z^k\}$ must be unbounded.
\end{proposition}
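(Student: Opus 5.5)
The plan is to reduce Proposition~\ref{prop:convergeADMMog} to Proposition~\ref{prop:convergeADMM}. We will exhibit exact minimizers $\dot x^{k+1}$ and error vectors $d_z^k$ such that the recursions \eqref{admmxog}--\eqref{admmupdateog} become instances of \eqref{admmx}--\eqref{admmupdate}, with $\delta_x^k$ and $\smallnorm{d_z^k}$ summable. The tool for this is Lemma~\ref{lem:partialStrongDistance}, which converts objective gaps into distance bounds.

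For the $x$-step, fix $k$ and write the subproblem objective as $\phi_k(x) = h_k(x) + \frac{c}{2}\smallnorm{W^{1/2}Mx}^2$. Here $h_k(x) \doteq f(x) + \inner{p^k}{Mx} - c\inner{W z^k}{Mx} + \text{const}$ is closed proper convex. We expand $\frac{c}{2}\norm{Mx-z^k}_W^2$ to split off the quadratic in $Mx$. Let $\dot x^{k+1}$ be an exact minimizer, which is assumed to exist as elsewhere in the paper. Lemma~\ref{lem:partialStrongDistance}, applied with $A \leftarrow W^{1/2}M$ and $\sigma \leftarrow c$, gives
\begin{equation*}
\norm{W^{1/2}(Mx^{k+1} - M\dot x^{k+1})} \leq \sqrt{2\tau_k^x/c}.
\end{equation*}
Hence $\norm{Mx^{k+1}-M\dot x^{k+1}} \leq \smallnorm{W^{-1/2}}\sqrt{2\tau_k^x/c} \doteq \delta_x^k$, and this sequence is summable by \eqref{sqrtsum}. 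This is precisely where the sharpened criterion in \eqref{admmx}, which measures distance only after applying $M$, is essential. Partial strong convexity controls only $Mx$, not $x$ itself.

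For the $z$-step, the objective is $g(z) - \inner{p^k}{z} + \frac{c}{2}\norm{\nu_k Mx^{k+1}+(1-\nu_k)z^k - z}_W^2$. This is strongly convex in $z$, so we use $A \leftarrow W^{1/2}$ and $\sigma\leftarrow c$ after the same expansion. The exact minimizer $\dot z^{k+1}$ is unique. Setting $d_z^k \doteq z^{k+1}-\dot z^{k+1}$, we get $\smallnorm{d_z^k} \leq \smallnorm{W^{-1/2}}\sqrt{2\tau_k^z/c}$, which is summable. The update \eqref{admmupdateog} coincides with \eqref{admmupdate}. Hence all hypotheses of Proposition~\ref{prop:convergeADMM} hold, and its conclusions transfer directly. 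These are the convergence of $\{p^k\}$, $\{z^k\}$ and $\{Mx^k\}$, the identification $z^*=Mx^*$ under the regularity condition, and the unbounded case.

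The remaining new claim is that every limit point of $\{x^k\}$ is primal optimal, and this is the main obstacle. Let $x^k \to x^\infty$ along a subsequence $\mathcal K$. Then $Mx^\infty = z^* = Mx^*$. The plan is to show $-M\transpose p^* \in \partial f(x^\infty)$; together with $p^*\in\partial g(Mx^\infty)=\partial g(z^*)$ and Proposition~\ref{prop:fenchobj}, this gives optimality. The most direct route is through objective values.

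Because $x^{k+1}$ is $\tau_k^x$-optimal, for any $x'$ we have
\begin{equation*}
f(x^{k+1}) + \inner{p^k}{Mx^{k+1}} + \tfrac{c}{2}\norm{Mx^{k+1}-z^k}_W^2 \leq f(x') + \inner{p^k}{Mx'} + \tfrac{c}{2}\norm{Mx'-z^k}_W^2 + \tau_k^x.
\end{equation*}
Take $x'=x^*$ and pass to the limit along $\mathcal K$. We use $Mx^{k+1}-z^k\to 0$, $Mx^*-z^k \to 0$, $p^k\to p^*$, $\tau_k^x\to 0$, and lower semicontinuity of $f$. This yields $f(x^\infty) \leq f(x^*)$ (the inner-product terms cancel since $Mx^\infty=Mx^*$). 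Then $P(x^\infty) = f(x^\infty)+g(Mx^*) \leq P(x^*)$, so $x^\infty$ is optimal.

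One index detail needs care: the limit point is of $\{x^{k+1}\}$ along shifted indices, and this must be handled consistently when passing to the subsequence.
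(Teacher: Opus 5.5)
Your proposal is correct. Its main part is the same as the paper's proof: the paper also expands the quadratic, applies Lemma~\ref{lem:partialStrongDistance} with $A = RM$ and $\sigma = c$ for the $x$-step (here $R$ is the symmetric square root of $W$, i.e.\ your $W^{1/2}$), applies it with $A = R$ for the $z$-step, and then invokes Proposition~\ref{prop:convergeADMM}.

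You differ from the paper only on the limit-point claim. The paper writes the approximate-optimality inequalities for \emph{both} subproblems, comparing against $x^*$ and $Mx^*$. It adds them so the $\inner{p^k}{Mx^*}$ terms cancel, then passes to the limit using lower semicontinuity of both $f$ and $g$. You use only the $x$-inequality and let the exact identity $Mx^\infty = z^* = Mx^*$ make the $g$-terms of $P(x^\infty)$ and $P(x^*)$ identical. As a result, neither the $z$-subproblem nor any property of $g$ enters, and the argument is shorter. In exchange, the paper's summed inequality controls $f(x^{k+1}) + g(z^{k+1})$ along the whole sequence, without presupposing a limit point.

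Your single inequality gives more if you keep $x'$ arbitrary instead of setting $x' = x^*$. Passing to the limit along $\mathcal{K}$ then shows that $x^\infty$ minimizes $f(x) + \inner{p^*}{Mx} + \frac{c}{2}\norm{Mx - z^*}_W^2$. Since $Mx^\infty = z^*$, this gives $-M\transpose p^* \in \partial f(x^\infty)$. Together with $p^* \in \partial g(z^*) = \partial g(Mx^\infty)$, Proposition~\ref{prop:fenchobj} yields optimality of every limit point under strong duality alone, without the regularity condition. This is exactly the subgradient route you announced before switching to objective values.
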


\begin{proof}
The hypotheses of the proposition are identical to the previous one, except
that~\eqref{admmxog}-\eqref{admmzog} take the place
of~\eqref{admmx}-\eqref{admmz}. Other than the claim regarding the limit
points of $\{x^k\}$, the conclusions are also identical, and will all follow
from the previous result if there exist sequences $\{\dot x^k\}_{k=1}^{\infty}
\subset \real^n$, $\{\delta_x^k\}_{k=1}^{\infty} \subset
\real_+$, and $\{d_z^k\}_{k=1}^{\infty} \subset \real^m$ such
that~\eqref{admmx}-\eqref{admmz} are satisfied for all $k\geq 0$,
$\{\delta_x^k\}$ is summable, and $\big\{\smallnorm{d_z^k}\big\}$ is summable.

To begin demonstrating this, let $R$ be the symmetric matrix square root of
$W$, following a similar construction to that of $Q$ immediately prior to
Proposition~\ref{prop:genPPArs}, so that $R^2 = R\transpose R = W$.
Temporarily fix any $k \geq 0$ and define $\phi_k : \real^n \to \real \cup
\{\infty\}$ to be the minimand in~\eqref{admmxog}, that is,
\begin{align}
\phi_k(x) 
  &\doteq f(x) + \inner{p^k}{Mx}  + \frac{c}{2}\norm{Mx - z^k}^2_W \nonumber \\
  &= f(x) + \inner{p^k}{Mx} + \frac{c}{2}(Mx - z^k)\transpose W (Mx - z^k) \nonumber \\
  &= f(x) + \inner{p^k}{Mx} + \frac{c}{2}(z^k)\transpose W z^k
                            - c (z^k)\transpose W Mx 
                            + \frac{c}{2}(Mx)\transpose W Mx \nonumber \\
  &= f(x) + \inner{p^k - cWz^k}{Mx} 
          + \frac{c}{2}\norm{z^k}^2_W
          + \frac{c}{2}x\transpose M\transpose W M x \nonumber \\
  &= f(x) + \biginner{M\transpose(p^k - cWz^k)}{x}
          + \frac{c}{2}\norm{z^k}^2_W
          + \frac{c}{2} x \transpose M\transpose R\transpose R M x \nonumber \\
  &= f(x) + \biginner{M\transpose(p^k - cWz^k)}{x}
          + \frac{c}{2}\norm{z^k}^2_W
          + \frac{c}{2} \norm{RMx}^2.   \label{fsubfinalfunc}
\end{align}
and let $\dot x^{k+1} \in \argmin_{x\in\real^n} 
\big\{ \phi_k(x) \big\}$  (assumed nonempty by the standing assumption on the
subproblems). Lemma~\ref{lem:partialStrongDistance} with $h$ being the closed
proper convex function given by the first three terms
in~\eqref{fsubfinalfunc}, $A = RM$, and $\sigma=c$ then asserts that
\begin{align*}
\norm{RMx^{k+1} - RM \dot x^{k+1}} 
  &\leq \sqrt{\frac{2}{c}\big(\phi_k(x^{k+1}) - \phi_k(\dot x^{k+1})\big)}
  \leq \sqrt{\frac{2 \tau_k^x}{c}},
\end{align*}
the second inequality following because $x^{k+1}$ is a $\tau_k^x$-approximate
minimizer of $\phi_k$.  Then,
\begin{align}
\norm{Mx^{k+1} - M \dot x^{k+1}}
  &= \norm{R^{-1}(RMx^{k+1} - RM \dot x^{k+1})} \nonumber \\
  &\leq \norm{R^{-1}} \norm{RMx^{k+1} - RM \dot x^{k+1}} \nonumber \\
  &\leq \norm{R^{-1}} \sqrt{\frac{2 \tau_k^x}{c}} 
  = \lambda_{\min}^{-1/2} \sqrt{\frac{2 \tau_k^x}{c}} 
  = \left(\sqrt{\frac{2}{c \lambda_{\min}}}\right) \sqrt{\tau_k^x}, \label{fbound}
\end{align}
where $\lambda_{\min} > 0$ is the smallest eigenvalue of $W$.  Setting
$\delta_x^k \doteq \sqrt{2/c \lambda_{\min}} \cdot \sqrt{\tau_k^x}$ for all $k
\geq 0$, it follows from the assumed summability of
$\big\{\sqrt{\tau_k^x}\big\}$ that $\{\delta_x^k\}$ is summable.  Observing
that~\eqref{fbound} holds for arbitrary $k \geq 0$,
one concludes that~\eqref{admmx} holds for all $k\geq 0$, with with
$\{\delta_x^k\}$ summable.

Next, consider~\eqref{admmzog}.  For all $k \geq 0$ define $t^k \doteq \nu_k
Mx^{k+1} + (1-\nu_k)z^k$ and $\gamma_k : \real^m \to \real \cup \{\infty\}$
to be the minimand in~\eqref{admmzog}, that is,
\begin{align*}
(\forall\, k \geq 0) \quad 
\gamma_k(z)
  &\doteq g(z) - \inner{p^k}{z} + \frac{c}{2}\norm{t^k - z}_W^2 \\
  &= g(z) - \inner{p^k}{z} + \frac{c}{2} (t^k - z)\transpose W (t^k - z) \\
  &= g(z) - \inner{p^k}{z} + \frac{c}{2} (t^k)\transpose W t^k - c \inner{Wt^k}{z}
          + \frac{c}{2} z\transpose W z && \\
  &= g(z) - \inner{p^k + cWt^k}{z} + \frac{c}{2} \norm{t^k}_W^2
          +  \frac{c}{2} z\transpose R\transpose R z \\
  &= g(z) - \inner{p^k + cWt^k}{z} + \frac{c}{2} \norm{t^k}_W^2
          + \frac{c}{2} \norm{Rz}^2.
\end{align*}
Defining $\dot z^{k+1} \doteq \argmin_{z\in\real^m} \big\{ \gamma_k(z) \big\}$
for all $k \geq 0$, that is, the exact minimizer in~\eqref{admmzog}, another
application of Lemma~\ref{lem:partialStrongDistance}, this time with $A=R$,
yields, much as before,
\begin{align*}
(\forall\, k \geq 0) \quad 
\norm{Rz^{k+1} - Rz^{k+1}} 
  &\leq \sqrt{\frac{2}{c}\big(\gamma_k(x^{k+1}) - \gamma_k(\dot x^{k+1})\big)}
  \leq \sqrt{\frac{2 \tau_k^z}{c}},
\end{align*}
the second inequality following from the $\tau_k^z$-optimality of $z^{k+1}$.
Therefore,
\begin{align*}
(\forall\, k \geq 0) \quad 
\norm{z^{k+1} - \dot z^{k+1}} 
&=\norm{R^{-1}(Rz^{k+1} - R \dot z^{k+1})} \\
&\leq \norm{R^{-1}}\norm{Rz^{k+1} - R \dot z^{k+1}}
  \leq \norm{R^{-1}} \sqrt{\frac{2 \tau_k^z}{c}}
  = \left(\sqrt{\frac{2}{c\lambda_{\min}}}\right) \sqrt{\tau_k^z}.
\end{align*}
Defining $d_z^{k+1} = z^{k+1} - \dot z^{k+1}$ for all $k\geq 0$, from
which~\eqref{admmz} immediately holds, it then follows from the assumed
summability of $\big\{\sqrt{\tau_k^z}\big\}$ that $\big\{ \smallnorm{d_z^k}
\big\}$ is summable.

The claimed equivalence of~\eqref{admmxog}-\eqref{admmzog}
to~\eqref{admmx}-\eqref{admmz} has now been fully established, so the
conclusions of the proposition follow immediately from
Proposition~\ref{prop:convergeADMM}, with the exception of the claim regarding
the primal optimality of all limit points of $\{x^k\}$ (which is not present
in Proposition~\ref{prop:convergeADMM}).  This result was claimed in the case
in which it is known that $z^k \to Mx^*$, where $x^*$ is some optimal primal
solution.  Let $x^\infty$ be any limit point of $\{x^k\}$ and $\mathcal{K}$ an
infinitely large subset of $\nat$ such that $\lim_{k\to\infty,k \in
\mathcal{K}} \{ x^{k+1} \} = x^\infty$.  The assumed approximate optimality of
$x^{k+1}$ and $z^{k+1}$ for their respective subproblems implies that
\begin{multline*}
(\forall\,k\geq 0) \quad
f(x^{k+1}) + \inner{p^k}{Mx^{k+1}} + \frac{c}{2}\norm{Mx^{k+1} - z^k}^2_W - \tau_k^x
   \\ \leq f(x^*) + \inner{p^k}{Mx^*} + \frac{c}{2}\norm{Mx^* - z^k}^2_W 
\end{multline*}
\vspace{-3ex}
\begin{multline*}
(\forall\,k\geq 0) \quad
g(z^{k+1}) - \inner{p^k}{z^{k+1}} + \frac{c}{2}\norm{Mx^{k+1} - z^{k+1}}^2_W - \tau_k^z
   \\ \leq g(Mx^*) - \inner{p^k}{Mx^*} + \frac{c}{2}\norm{Mx^{k+1} - Mx^*}^2_W.
\end{multline*}
Adding these inequalities and cancelling the inner product terms on
the right yields
\begin{multline*}
(\forall\,k\geq 0) \quad
f(x^{k+1}) + g(z^{k+1}) + \inner{p^k}{Mx^{k+1} - z^{k+1}} 
   \\ + \frac{c}{2}\norm{Mx^{k+1} - z^k}^2_W 
   + \frac{c}{2}\norm{Mx^{k+1} - z^{k+1}}^2_W
   - \tau_k^x - \tau_k^z
   \\ \leq f(x^*) + g(Mx^*) + \frac{c}{2}\norm{Mx^* - z^k}^2_W 
                            + \frac{c}{2}\norm{Mx^{k+1} - Mx^*}^2_W.
\end{multline*}
Since $Mx^k \to Mx^*$ and $z^k \to Mx^*$, the limit of the right side of this
inequality is $f(x^*) + g(Mx^*)$, the optimal objective value.  For the same
reasons, the two squared-norm terms on left also converge to zero as $k\to\infty$.
Noting that $\{p^k\}$ is convergent, the inner product term on the left
converges to zero as well.  By hypothesis, $\tau_k^x \to 0$ and $\tau_k^z \to
0$, so taking the limit over $k\in\mathcal{K}$ results in
\begin{equation*}
\limsup_{\substack{k\to\infty \\ k\in \mathcal{K}}}
  \big\{ f(x^{k+1}) + g(z^{k+1}) \big\} \leq f(x^*) + g(Mx^*).
\end{equation*}
Since $\{Mx^k\}$ converges to $Mx^*$, one has $Mx^\infty = Mx^*$.  Further,
since $f$ and $g$ are closed (lower semicontinous) and $z^k \to Mx^*$ one has
\begin{align*}
f(x^\infty) + g(Mx^\infty)
  &= f\!\left(\lim_{\substack{k\to\infty \\ k\in \mathcal{K}}}x^k \right )
     + g\!\left(\lim_{\substack{k\to\infty \\ k\in \mathcal{K}}}M x^k \right ) \\
  &\leq \liminf_{\substack{k\to\infty \\ k\in \mathcal{K}}} \big\{ f(x^k) \big\}
     + \liminf_{\substack{k\to\infty \\ k\in \mathcal{K}}} \big\{ g(Mx^k) \big\} \\
  &\leq \liminf_{\substack{k\to\infty \\ k\in \mathcal{K}}} 
            \big\{ f(x^k) + g(Mx^k) \big\} \\
  &\leq \limsup_{\substack{k\to\infty \\ k\in \mathcal{K}}} 
            \big\{ f(x^k) + g(Mx^k) \big\} \\
  &\leq f(x^*) + g(Mx^*),
\end{align*}
the last inequality having been established above.  Therefore $x^\infty$ is an
optimal primal solution, and since the choice of limit point $x^\infty$ of
$\{x^k\}$ as arbitrary, the proof is complete.
\end{proof}

In the above proposition, objective gaps are used in the approximation
criteria for both subproblems.  It is of course also possible to mix criteria
in any manner that verifies the assumptions of
Proposition~\ref{prop:convergeADMM}; for example, one could use a subproblem
objective gap to verify~\eqref{admmx} but a subgradient-based criterion to
establish~\eqref{admmz}.  In the subspace-constrained and stochastic
programming applications to follow, \eqref{admmx} will be verified with and
objective gap, but the $g$ minimization will be essentially exact.

\subsection{Applications with subspace constraints}
\label{sec:subspaceConADMM}
Consider the same class of applications as in Section~\ref{sec:subspaceCon},
of the form $\min_{x\in\real^n} \set{ f(x) }{Mx
\in V}$, where $M$ is $m \times n$ as above and $V$ is a linear subspace of
$\real^m$.  The natural way to formulate this problem in Fenchel-Rockafellar
form $\min_{x\in\real^n}\big\{f(x) + g(Mx)\big\}$ is to set
\begin{equation} \label{indicV}
g(z) \doteq 
\begin{cases}
0, & \text{if~} z\in V \\
+\infty, & \text{otherwise,}
\end{cases}
\end{equation}
the convex indicator function of $V$.  In this case, $\partial g(z) =
V\orthog$ at all $z\in V$ and $\partial g(z) = \emptyset$ for all $z \not\in
V$.  For all $c>0$, the proximal map $\prox_{c\partial g}$ is the orthogonal
projector $\proj_V$ onto $V$, and for any symmetric positive definite matrix
$W$, the preconditioned proximal map $\sprox{W\!}_{c\partial g}$ is the
projector onto $V$ using the $\norm{\spcdot}_W$ norm; assume that the latter
projection operation is tractable to compute exactly. Fixing any $k \geq 0$
and letting $\tau_k^z = 0$ so that the calculation becomes exact, the $g$
minimization~\eqref{admmzog} specializes as follows, letting $t^k \doteq \nu_k
Mx^{k+1} + (1 - \nu_k) z^k$ for brevity:
\allowdisplaybreaks
\begin{align*}
z^{k+1} 
  &= \argmin_{z\in V} \left\{ - \inner{p^k}{z} + \frac{c}{2}\norm{t^k - z}_W^2 \right\} \\
  &= \argmin_{z\in V} \left\{ - \inner{p^k}{z} 
                              + \frac{c}{2}\norm{t^k}_W^2 
                              - c (t^k)\transpose W z
                              + \frac{c}{2}\norm{z}_W^2
                      \right\} \\
  &= \argmin_{z\in V} \left\{ \frac{c}{2}\norm{t^k}_W^2 
                              - c (t^k + \tfrac{1}{c}W^{-1}p^k)\transpose W z
                              + \frac{c}{2}\norm{z}_W^2
                      \right\} \\
  &= \argmin_{z\in V} \left\{ \frac{c}{2}\norm{t^k + \tfrac{1}{c}W^{-1}p^k}_W^2 
                              - c (t^k + \tfrac{1}{c}W^{-1}p^k)\transpose W z
                              + \frac{c}{2}\norm{z}_W^2
                      \right\} \\
  & \qquad [\text{since setting the first term to a different constant
                        cannot change the minimizer}] \\
  &= \argmin_{z\in V} \left\{ \frac{c}{2}\norm{z - (t^k + \tfrac{1}{c}W^{-1}p^k)}_W^2
                      \right\} \\
  &= \sproj{W\!}_V(t^k + \tfrac{1}{c}W^{-1}p^k) \\
  &= \sproj{W\!}_V(t^k) + \sproj{W\!}_V(\tfrac{1}{c}W^{-1}p^k),
\end{align*}
the last step following because $\sproj{W\!}_V$ is a linear map, as stated in
Proposition~\ref{prop:wproj}\ref{item:wproj:linear}.  If one assumes that $p^k
\in V\orthog$, meaning that $(p^k)\transpose v = 0$ for all $v\in V$, then it
follows that
\begin{equation*}
(\forall\,v\in V) \quad 
(W^{-1}p^k)\transpose W v
   = (p^k)\transpose W\invtrans W v
   = (p^k)\transpose W^{-1} W v
   = (p^k)\transpose v
   = 0,
\end{equation*}
meaning that $W^{-1}p^k \in V\conjspace{W}$ and hence that
$\sproj{W\!}_V(\tfrac{1}{c}W^{-1}p^k) = 0$.  Therefore,
\begin{equation} \label{admmconj1}
z^{k+1} = \sproj{W\!}_V(t^k) = \sproj{W\!}_V\big(\nu_k Mx^{k+1} + (1 - \nu_k) z^k\big).
\end{equation}
Using the linearity of $\sproj{W\!}_V$ and inductively assuming that $z^k \in
V$ and, one may further write
\begin{align*}
z^{k+1} &= \nu_k \big(\sproj{W\!}_V( Mx^{k+1})\big) 
                + (1 - \nu_k)\big(\sproj{W\!}_V(z^k)\big) \\
        &= \nu_k \big(\sproj{W\!}_V(Mx^{k+1})\big) + (1-\nu_k) z^k,
\end{align*}
or, introducing an intermediate variable $v^{k+1}$ to represent the $W\!$-projection,
\begin{align*}
v^{k+1} &= \sproj{W\!}_V(Mx^{k+1}) &
z^{k+1} &= \nu_k v^{k+1} + (1-\nu_k) z^k.
\end{align*}
Substituting this formula for $z^{k+1}$ into the multiplier
update~\eqref{admmconj1} then leads to
\begin{align*}
p^{k+1} 
  &= p^k + cW\Big( \nu_k Mx^{k+1} + (1 - \nu_k) z^k 
                     - \big(\nu_k v^{k+1} + (1-\nu_k) z^k\big)\Big) \\
  &= p^k + cW ( \nu_k Mx^{k+1} - \nu_k v^{k+1} ) \\
  &= p^k + \nu_k c W ( Mx^{k+1} - v^{k+1} ) \\
  &= p^k + \nu_k cW \big( \sproj{W\!}_{V\conjspace{W}}(Mx^{k+1})\big).
\end{align*}
Defining $u^{k+1} \doteq \sproj{W\!}_{V\conjspace{W}}(Mx^{k+1}) = Mx^{k+1} -
v^{k+1}$, one has $v\transpose W u^{k+1} = 0$ for all $v\in V$ since $u^{k+1}
\in V\conjspace{W}$.  Regrouping the product, $v\transpose (W u^{k+1})$ for
all $v \in V$ and hence $W u^{k+1} \in V\orthog$.  Hence, if $p^k \in
V\orthog$, then $p^{k+1} = p^k + \nu_k c W u^{k+1} \in V\orthog$. Inductively,
if $p^0 \in V\orthog$, then the sequence $\{p^k\}$ will lie entirely in
$V\orthog$, and clearly $z^k \in V$ for all $k \geq 1$ since these vectors are
all projections onto $V$.\footnote{Using more complicated expressions for the
projection step and multiplier update, it is possible with a modest amount of
additional analysis to dispense with the assumption $p^0\in V\orthog$, while
establishing that  $p^k \in V\orthog$ for all $k \geq 1$. But since the
assumption $p^0\in V\orthog$ is easily met by the customary choice of $p^0 =
0$, the exposition here adopts the simpler approach.}

Assembling the entire resulting method, one obtains the following result.
With the choice of $g$ in~\eqref{indicV}, it is readily seen that $g^*(p) = 0$
for $p\in V\orthog$ and otherwise $g^*(p) = +\infty$, so the dual problem in
the proposition is $\min_{p\in V\orthog} \big\{ f^*(-M\transpose p)
\big\}$.

\begin{proposition} \label{prop:convergeADMMogsp}
Now consider a the class of problems of the form $\min_{x\in\real^n}
\set{f(x)}{Mx \in V}$, where $f:\real^n\to\real\cup\{+\infty\}$ is closed
proper convex, $M$ is an $m \times n$ matrix, and $V$ is a linear subspace of
$\real^m$.  Let $\{\tau_k\}_{k=1}^{\infty}, \subset \real_+$ be a sequence such that
$\sum_{i=1}^\infty \sqrt{\tau_k} < \infty$, and suppose that $\{\nu_k\}_{k=0}^\infty
\subset \real$ is such that $\inf_k \nu_k > 0$ and $\sup_k \nu_k < 2$.
For any constant scalar $c>0$ and arbitrary given initial $z^0 \in V$ and $
p^0 \in V\orthog$, suppose that the sequences $\{z^k\}_{k=0}^\infty,
\{v^k\}_{k=1}^\infty \subset V$, $\{p^k\}_{k=0}^\infty \subset V\orthog$, and
$\{x^k\}_{k=1}^\infty \subset \real^n$ conform to the following recursions for
all $k\geq 0$:
\begin{align}
x^{k+1} &\in 
   \approxmin{\tau_k}{x\in\real^n} 
         \left\{f(x) + \inner{p^k}{Mx}  + \frac{c}{2}\norm{Mx - z^k}^2_W \right\}
         \label{admmxogsp} \\
v^{k+1} &= \sproj{W\!}_V(Mx^{k+1}) 
         \label{admmvogsp} \\
z^{k+1} &= \nu_k v^{k+1} + (1-\nu_k) z^k
         \label{admmzogsp} \\
p^{k+1} &= p^k + \nu_k c W (Mx^{k+1} - v^{k+1}). 
         \label{admmupdateogsp}
\end{align}
If strong duality holds, then $\{p^k\}$ converges to a solution $p^* \in
V\orthog$ of the dual problem, while $\{z^k\}$, $\{v^k\}$, and $\{Mx^k\}$
converge to some $z^*\in V$ such that $-z^* \in \partial\big(f^* \circ
(-M\transpose)\big)(p^*)$.  If the regularity condition $\ri \im \partial f
\cap \im M\transpose \neq \emptyset$ is also true, then $z^* = Mx^*$, where
$x^*$ is some solution of the primal problem $\min_{x\in\real^n} \set{f(x)}{Mx
\in V}$.  In this case, all limit points of $\{x^k\}$ are optimal primal
solutions.

If the dual problem has no solution but there exists some $\bar p \in \real^m$ at
which the dual objective is finite, at least one of the sequences $\{p^k\}$ or
$\{z^k\}$ must be unbounded.
\end{proposition}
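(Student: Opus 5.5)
The plan is to reduce the statement to Proposition~\ref{prop:convergeADMMog}, applied with $g$ the indicator function of $V$ from~\eqref{indicV}, $\tau_k^x \doteq \tau_k$ and $\tau_k^z \doteq 0$. The summability condition~\eqref{sqrtsum} then holds trivially for $\{\tau_k^z\}$ and by hypothesis for $\{\tau_k^x\}$. Step~\eqref{admmxogsp} is literally~\eqref{admmxog}. It therefore suffices to show that the sequences generated by~\eqref{admmxogsp}--\eqref{admmupdateogsp} also satisfy~\eqref{admmzog} (with $\tau_k^z = 0$) and~\eqref{admmupdateog}.

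For this I will run an induction on $k$ with hypothesis $z^k\in V$ and $p^k\in V\orthog$, which holds at $k=0$ by the choice of initial points. Given the hypothesis, the computation preceding the proposition shows that the exact minimizer in~\eqref{admmzog} equals $\sproj{W\!}_V(t^k)$, where $t^k = \nu_k Mx^{k+1} + (1-\nu_k)z^k$. The key fact is that $W^{-1}p^k \in V\conjspace{W}$, so that $\sproj{W\!}_V(\tfrac1c W^{-1}p^k)=0$. By linearity of $\sproj{W\!}_V$ (Proposition~\ref{prop:wproj}\ref{item:wproj:linear}) and $z^k\in V$, this minimizer is $\nu_k v^{k+1} + (1-\nu_k)z^k = z^{k+1}$. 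Hence~\eqref{admmzog} holds exactly. Substituting $z^{k+1}$ into~\eqref{admmupdateog} gives $p^k + \nu_k cW(Mx^{k+1}-v^{k+1})$, which is~\eqref{admmupdateogsp}. To close the induction I note two things. First, $z^{k+1}$ is an affine combination of elements of $V$, so it lies in $V$. Second, $Mx^{k+1}-v^{k+1} = \sproj{W\!}_{V\conjspace{W}}(Mx^{k+1})$ by Proposition~\ref{prop:wproj}\ref{item:wproj:conj}, so $W(Mx^{k+1}-v^{k+1})\in V\orthog$ and hence $p^{k+1}\in V\orthog$.

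Proposition~\ref{prop:convergeADMMog} then delivers the following conclusions:
\begin{itemize}
\item convergence of $\{p^k\}$ to a dual solution $p^*$, which lies in $V\orthog$ because $V\orthog$ is closed (or because $g^*$ is finite only there);
\item convergence of $\{z^k\}$ and $\{Mx^k\}$ to a common $z^*$, with $z^*\in V$ since $V$ is closed;
\item the inclusion $-z^* \in \partial\big(f^*\circ(-M\transpose)\big)(p^*)$;
\item under the regularity condition, primal optimality of $x^*$ with $z^*=Mx^*$ and optimality of all limit points of $\{x^k\}$;
\item in the case where the dual has no solution, unboundedness of $\{p^k\}$ or $\{z^k\}$.
\end{itemize}

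The one extra claim is convergence of $\{v^k\}$. It follows from $v^{k+1} = \sproj{W\!}_V(Mx^{k+1})$ and continuity of the linear map, giving $v^k \to \sproj{W\!}_V(z^*) = z^*$ because $z^*\in V$. Alternatively, $v^{k+1} = (z^{k+1}-(1-\nu_k)z^k)/\nu_k$ with $\nu_k$ bounded away from zero gives the same limit.

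I expect the main obstacle to be bookkeeping rather than substance. The induction must simultaneously certify $p^k\in V\orthog$, which is needed to drop the $W^{-1}p^k$ term, and $z^k\in V$, which is needed for linearity to collapse $\sproj{W\!}_V(z^k)$ to $z^k$. I also need to confirm that the minimum in~\eqref{admmzog} is attained and unique, which holds because the objective is strongly convex on the closed subspace $V$.
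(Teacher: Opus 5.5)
Your proposal is correct and follows essentially the same route as the paper: you reduce to Proposition~\ref{prop:convergeADMMog} with $g$ the indicator of $V$, $\tau_k^x=\tau_k$, and $\tau_k^z=0$, use the preceding derivation (with the induction $z^k\in V$, $p^k\in V\orthog$) to identify the $z$- and $p$-steps, and obtain convergence of $\{v^k\}$ from continuity of the linear map $\sproj{W\!}_V$. The only difference is that you write out the bookkeeping that the paper's short proof leaves to ``the development above.''
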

\begin{proof}
In view of the development above, all the claimed results except the
convergence of $\{v^k\}$ follow immediately from
Proposition~\ref{prop:convergeADMMog} with $g$ as in~\eqref{indicV}, along
with $\tau_k^x = \tau_k$ and $\tau_k^z = 0$ for all $k \geq 0$, keeping in
mind that with this choice of~$g$,
\begin{align*}
    z^* &\in \partial g^*(p^*)  &&\Leftrightarrow &
    p^* &\in \partial g(z^*)    &&\Leftrightarrow &
    z^* &\in V \; \wedge \; p^* \in V\orthog.
\end{align*}
Regarding the convergence of $\{v^k\}$, since $Mx^k \to
z^* \in V$ and $\sproj{W\!}_V$ is a linear and hence continuous map,
\eqref{admmvogsp} implies that $v^k = \sproj{W\!}_V(M x^k) \to
\sproj{W\!}_V(z^*) = z^*$.
\end{proof}

\section{Stochastic programming applications}
\label{sec:stochProg}
Now consider the ``grid-and-subspace'' stochastic programming formulation
originating with~\cite{RW91}, using the following particular notation: suppose
that a stochastic programming problem is defined on a finite scenario tree
with stages indexed by $t \in 1..T$ and $\ell$ leaf nodes indexed by $i \in
1..\ell$.  For each node $N$ in the tree, let $t(N)$ denote its time stage.
Further, let $\mathcal{T}_t$ denote the set of nodes at time stage $t$ and let
$\mathcal{U} = \bigcup_{t=1}^{T-1} \mathcal{T}_t$ denote the set of non-leaf
nodes. For every node $N$, also let $\mathcal{D}(N)\subseteq 1..\ell$ denote
the set of leaf nodes that are descendents of $N$.  For each $t \in 1..T$, the
sets $\set{\mathcal{D}(N)}{N\in\mathcal{T}_t}$ form a partition of $1..\ell$.
For each $i\in 1..\ell$, denote the probability of leaf node $i$ by $\pi_i \in
(0,1]$, subject to $\sum_{i=1}^\ell \pi_i = 1$.  For each node $N$ in the
tree, let $\pi(N) = \sum_{i\in\mathcal{D}(N)} \pi_i$ denote its probability.
Assume that all zero-probability nodes have already been pruned from the
scenario tree.  The term ``scenario'' will be taken to be synonymous with
``last-stage node'' and thus with the leaves of the tree.

For simplicity (and without loss of generality), assume that every stage-$t$
node in the tree has the same number of decision variables $n_t \geq 1$, and
let $\bar n \doteq \sum_{t=1}^T n_t$ denote the full number of decision
variables associated with each scenario and $\bar m \doteq \bar n - n_T$ be
similar, excluding the last stage.

The working variables $x$ take the form a of a ``grid'', consisting of a
subvector $x_{i} \in \real^{\bar n}$ of full root-to-leaf decision variables
for each scenario $i \in 1..\ell$; thus, their total dimension is $n \doteq
\ell \bar n$.  For each $i\in 1..\ell$ and $t\in 1..T$, let $x_{it} \in
\real^{n_t}$ denote the decision variables for scenario $i$, stage $t$, and let
$x_{itj} \in \real$, $j\in 1..n_t$, denote the individual elements of this
vector. Define $\overline M$ to be the linear map that drops the last $n_T$
variables from a vector of length $\bar m$, dropping the last-stage variables
from a single scenario, that is, for any scenario $i \in 1..\ell$,
\begin{equation} \label{dropdef}
\overline Mx_{i} = \overline M (x_{i1},\ldots,x_{i,T-1},x_{iT}) 
               = (x_{i1},\ldots,x_{i,T-1}),
\end{equation}
so that
\begin{align*}
\overline{M} &= \big[ \identity_{\bar m} \;\; 0_{\bar m \times n_T}\big] &
&\text{and} &
\overline{M}\transpose &= 
\left[
\begin{array}{c}
\identity_{\bar m} \\ 0_{n_T \times\bar m }
\end{array}
\right],
\end{align*}
where $\identity_a$ denotes the $a \times a$ identity matrix and $0_{a \times
b}$ denotes the $a \times b$ zero matrix.

Let $M$ be the linear operator that, given a vector $x\in\real^n$ comprised of a
subvector of length $\bar n$ for each scenario, drops the last-stage variables
from each subvector, so that
\begin{multline} \label{fulldropdef}
Mx = M\big(x_1,\ldots,x_n\big) 
   = M \left(
           \begin{array}{c}
             \big(x_{11},\ldots, x_{1,T-1}, x_{1T}\big) \\
             \vdots \\
             \big(x_{\ell 1}, \ldots, x_{\ell,T-1}, x_{\ell,T}\big)
           \end{array}
       \right) \\
   = \left(
           \begin{array}{c}
             \overline M \big(x_{11}, \ldots, x_{1,T-1}, x_{1T}\big) \\
             \vdots \\
             \overline M \big(x_{\ell 1},\ldots, x_{\ell,T-1}, x_{\ell,T}\big)
           \end{array}
       \right) 
   = \left(
           \begin{array}{c}
             \big(x_{11},\ldots, x_{1,T-1}\big) \\
             \vdots \\
             \big(x_{\ell 1},\ldots, x_{\ell,T-1}\big)
           \end{array}
       \right),
\end{multline}
or equivalently
\begin{align*}
M &=
\underbrace{
\left[
\begin{array}{cccc}
\overline{M} \\
& \overline{M} \\
& & \ddots \\
& & & \overline{M}
\end{array}
\right]
}_{\ell \text{~times}}
&
M\transpose &=
\underbrace{
\left[
\begin{array}{cccc}
\overline{M}\transpose \\
& \overline{M}\transpose \\
& & \ddots \\
& & & \overline{M}\transpose
\end{array}
\right]
}_{\ell \text{~times}}.
\end{align*}

The Rockafellar-Wets formulation approach dedicates multiple subvectors of a
decision vector $x\in\real^n$ to each node of the scenario tree; for example,
the root node corresponds to the $\ell$ subvectors $x_{11}, \ldots, x_{\ell
1}$.  For the solution corresponding to $x$ to be implementable for the
stochastic programming problem, all these subvectors must be equal, that is,
$x_{11}=\cdots=x_{\ell 1}$, meaning that the first-stage decisions cannot
depend on knowledge of which scenario will eventually transpire.  More
generally, for any non-leaf node $N \in \mathcal{U}$, all the corresponding
stage-$t(N)$ decisions $x_{it(N)}$, for $i\in \mathcal{D}(N)$, must be
identical, meaning that the decisions at that node cannot depend on which
descendent of $N$ eventually occurs. For vectors $z$ in the range space
$\real^m$ of $M$, apply the same subvector indexing notation as for $x $ but
excluding stage $T$, and define a linear subspace $\mathcal{N} \subset
\real^m$ by
\begin{equation}
\mathcal{N} \doteq \set{z\in\real^m}{(\forall\,N\in\mathcal{U}) \;
                                      \big(\forall\,i,i' \in \mathcal{D}(N)\big) :
                                      z_{it(N)} = z_{i't(N)}}.
\end{equation}
Vectors in $z\in\mathcal{N}$ are called \emph{nonclairvoyant} or
\emph{nonanticipative} in the sense that $z_{it} = z_{i't}$ whenever
scenarios $i$ and $i'$ are indistinguishable at stage $t$.  They thus
correspond to implementable plans for stages $1..(T-1)$, requiring no
knowledge of future events.

For each $i\in 1..n$, let $f_i : \real^{\bar n} \rightarrow \real \cup \{+\infty\}$
be a closed proper convex function and consider the optimization problem
\begin{equation}
\label{abstractSP}
\begin{array}{ll}
\displaystyle{\min_{x\in\real^n}}\; 
   & \displaystyle{\sum_{i=1}^{\ell} \pi_i f_i\big(x_{i}\big)} \\
\suchthat & Mx \in \mathcal{N}. \vphantom{\displaystyle{\sum_i}}
\end{array}
\end{equation}
This optimization model can subsume any convex stochastic programming problem
defined on the scenario tree $\mathcal{T}$, in the following manner:  within
the context of the ``clairvoyant'' situation in which one knows that leaf node
$i \in 1..\ell$ of the tree will occur,  define $f_i\big(x_{i}\big) = +\infty$
whenever $x_{i}$ is infeasible, and otherwise let $f_i\big(x_{i}\big)$ be the
total cost of the plan described by $x_{i}$.  In particular,
$f_i\big(x_{i}\big)$ is $+\infty$ if $x_{i}$ violates any constraint within a
stage or any coupling constraint between stages.  For all scenarios $i\in
1..\ell$, such constraints are embedded within the objective function
of~\eqref{abstractSP}, while the explicit constraint $Mx \in \mathcal{N}$
requires the selection of an nonclairvoyant plan.  

Defining 
\begin{equation} \label{defspf}
    f(x) \doteq \sum_{i=1}^{\ell} \pi_i f_i(x_{i}),
\end{equation}
the entire stochastic programming problem may be written $\min_{x\in\real^n}
\set{f(x)}{Mx \in \mathcal{N}}$, an instance of the subspace-constrained form
considered in Sections~\ref{sec:subspaceCon} and~\ref{sec:subspaceConADMM}, so
one may apply the algorithms developed there. 

\subsection{Applying the ADMM: progressive hedging}

At present, the customary choice for solving~\eqref{abstractSP} by a proximal
algorithm is effectively to apply the ADMM, resulting in the standard
progressive hedging (PH) algorithm as proposed in~\cite{RW91}.\footnote{The
original analysis in~\cite{RW91} proved convergence of PH from first
principles rather than DR splitting or the ADMM, possibly because neither
technique was broadly known in the numerical optimization community at the
time.  However, PH is a special case of the ADMM and hence of DR splitting.}
The analysis here does the same, but with the generalized objective-gap ADMM
of Proposition~\ref{prop:convergeADMMog}, yielding an objective-gap inexact PH
algorithm, which appears previously unknown.

Expanding the dual problem
$\min_{p\in V\orthog} \big\{ f^*(-M\transpose p) \big\}$ stated just before
Proposition~\ref{prop:convergeADMMogsp}, one obtains the dual problem
\begin{equation} \label{stochProgDual}
    \min_{p \in \mathcal{N}\orthog} \left\{
        \sum_{i=1}^{\ell}
        (\pi_i f_i)^*(-\overline M\transpose p_i)
    \right\},
    \quad \text{or equivalently} \quad
    \min_{p \in \mathcal{N}\orthog} \left\{
        \sum_{i=1}^{\ell}
        \pi_i f_i^*\Big(\big(-(1/\pi_i)p_i,0\big)\Big)
    \right\},
\end{equation}
where the equivalence follows from the form of $\overline M$ and the
conjugate-scaling formula $(\alpha h)^* = \alpha h^*(\spcdot / \alpha)$ for
any function $\real^n \to \real \cup \{+\infty\}$ and scalar $\alpha > 0$; see
for example~\cite[Proposition 13.23(i)]{BauComBook}.

To obtain an algorithm closely generalizing the usual presentation and
implementation of PH, the matrix $W$ is constructed from two components, the
scenario probabilities $\pi_i$, $i\in 1..\ell$, and an $\bar m \times \bar m$
positive definite diagonal matrix $\Rho$ (capital rho) that reflects the
possibly varying scaling of the non-last-stage decision variables within each
scenario (the original formulation of PH effectively set $\Rho = \rho
\identity_{\bar m}$, some positive multiple of the $\bar m \times \bar m$
identity matrix, but acceptable practical performance typically requires
variable-by-variable scaling, often heuristically chosen in the early
iterations of the method). $W$ is then constructed as the $m \times m = \ell
\bar m \times \ell \bar m$ diagonal matrix
\begin{equation} \label{spscaling}
W \doteq 
\left[
\begin{array}{cccc}
\pi_1 \Rho \\
& \pi_2 \Rho \\
& & \ddots \\
& & & \pi_{\ell} \Rho
\end{array}
\right].
\end{equation}
With this choice of $W$, now consider applying the subspace-constrained
ADMM~\eqref{admmxogsp}-\eqref{admmupdateogsp} to $f$ and $M$ as constructed
above, with $c=1$ and $V = \mathcal{N}$.  Applying the same indexing scheme to
the $p$ variables as to the $z$ variables (in turn inherited from the indexing
scheme for $x$ as mentioned above) and expanding
\begin{align*}
\inner{p^k}{Mx} &= \sum_{i=1}^{\ell} \biginner{p^k_{i}}{\overline{M} x_{i}} &
\norm{Mx - z^k}_W^2 &= \sum_{i=1}^{\ell} \norm{\overline{M} x_{i} - z^k_{i}}_{\pi_i \Rho}^2,
\end{align*}
the first step~\eqref{admmxogsp} in the method may be written
\begin{equation} \label{phmin1}
x^{k+1} \in 
   \approxmin{\tau_k}{x\in\real^{\bar n}} 
         \left\{\sum_{i=1}^{\ell} \pi_i f_i(x_{i}) +
                \sum_{i=1}^{\ell} \inner{p^k_{i}}{\overline{M} x_{i}} + 
                \frac{1}{2}\sum_{i=1}^{\ell} 
                              \norm{\overline{M} x_{i} - z^k_{i}}_{\pi_i \Rho}^2 \right\}.
\end{equation}
For each $k\geq 0$, let $\tau_{1k}, \tau_{2k}, \ldots, \tau_{\ell k} \geq 0$
be such that $\sum_{i=1}^\ell \tau_{ik} = \tau_k$.  Then~\eqref{phmin1} may be
satisfied by~$\ell$ independent approximate minimization operations
\begin{align*}
(\forall\,i\in 1..\ell) \quad
x_i^{k+1} &\in 
   \approxmin{\tau_{ik}}{x\in\real^{\bar n}} 
         \left\{\pi_i f_i(x_{i}) +
                \inner{p^k_{i}}{\overline{M} x_{i}} + 
                \frac{\pi_i}{2} \norm{\overline{M} x_{i} - z^k_{i}}_{\Rho}^2 \right\} \\
    &= \approxmin{\tau_{ik}}{x\in\real^{\bar n}} 
         \left\{\pi_i f_i(x_{i}) +
                \sum_{t=1}^{T-1}
                \sum_{j=1}^{n_t}
                 \left( p^k_{itj} x_{itj} + 
                \frac{\pi_i \rho_{tj}}{2} (x_{itj} - z^k_{itj})^2 \right) \right\},
\end{align*}
where $\rho_{tj}$ is the diagonal element of $\Rho$ for stage $t$, decision
variable $j$. The next step in the algorithm is $W\!$-projection of $Mx^{k+1}$
onto $V=\mathcal{N}$ in~\eqref{admmvogsp}.  Applying the same indexing scheme
from the $z$ and $p$ variables to the $v$ variables,
\begin{align*}
v^{k+1} &= \sproj{W\!}_{\mathcal{N}}(Mx^{k+1}) \\
  &= \argmin_{v\in \mathcal{N}}
       \left\{
         \sum_{i=1}^{\ell} (v_i - \overline{M} x_i^{k+1})\transpose
                           (\pi_i \Rho)
                           (v_i - \overline{M} x_i^{k+1})
       \right\} \\
  &= \argmin_{v\in \mathcal{N}}
       \left\{
         \sum_{i=1}^{\ell} 
         \sum_{t=1}^{T-1} (v_{it} - x_{it}^{k+1})\transpose
                          (\pi_i \Rho_t)
                          (v_{it} - x_{it}^{k+1})
       \right\} \\
  &= \argmin_{v\in \mathcal{N}}
       \left\{
         \sum_{i=1}^{\ell} 
         \sum_{t=1}^{T-1}
         \sum_{j=1}^{n_t} \pi_i \rho_{tj}
                          (v_{itj} - x_{itj}^{k+1})^2
       \right\},
\end{align*}
Grouping this summation by (non-leaf) tree node yields tree yields
\begin{align*}
v^{k+1} 
  &= \argmin_{v\in \mathcal{N}}
       \left\{
         \sum_{N\in\mathcal{U}}
         \sum_{i \in \mathcal{D}(N)}       
         \sum_{j=1}^{n_{t(N)}} \pi_i \rho_{t(N)j}
                          (v_{it(N)j} - x_{it(N)j}^{k+1})^2
       \right\},
\end{align*}
If $v \in \mathcal{N}$, then for each non-leaf node $N \in \mathcal{U}$
at and each variable index $j \in 1..n_{t(N)}$, the variables $v_{it(N)j},
i\in\mathcal{D}(N)$ must take some common value $a_{Nj}$, so the goal is
to minimize, substituting each $v_{it(N)j} \leftarrow a_{Nj}$ and
interchanging the order of the last two summations,
\begin{equation*}
         \sum_{N\in\mathcal{U}}
         \sum_{j=1}^{n_{t(N)}} 
         \sum_{i \in \mathcal{D}(N)}       
             \pi_i \rho_{t(N)j} \big(a_{Nj} - x_{it(N)j}^{k+1}\big)^2
\quad \text{over~} \quad
 \{a_{Nj}\}_{\begin{subarray}{l} N\in\mathcal{U} \\ j\in 1..n_{t(N)}\end{subarray}}.
\end{equation*}
This problem decomposes by tree node $N \in \mathcal{U}$ and variable $j \in
1..n_{t(N)}$, so one may solve the problem by the independent calculations
\begin{equation*}
(\forall\,N\in\mathcal{U}) \;
(\forall\,j\in 1..n_{t(N)}) \quad
a_{Nj} = \argmin_{t\in\real} 
              \left\{ 
                 \sum_{i\in \mathcal{D}(N)}
                 \pi_i \rho_{t(N)j} \big(t - x_{it(N)j}^{k+1} \big)^2
              \right\}.
\end{equation*}
Setting the derivatives of the minimands to zero yields
\pagebreak[3]
\begin{align*}
(\forall\,N\in\mathcal{U}) \;
(\forall\,j\in 1..n_{t(N)}) &&&&
\sum_{i\in \mathcal{D}(N)} 
    2 \pi_i \rho_{t(N)j} \big(v_{Nj} - x_{it(N)j}^{k+1}\big) &= 0 \\
&& \Leftrightarrow &&
    2 \rho_{t(N)j} \sum_{i\in \mathcal{D}(N)} 
          \pi_i \big(v_{Nj} - x_{it(N)j}^{k+1}\big) &= 0 \\
&& \Leftrightarrow &&
\sum_{i\in \mathcal{D}(N)} \pi_i \big(v_{Nj} - x_{it(N)j}^{k+1}\big) &= 0 \\
&& \Leftrightarrow &&
\sum_{i\in \mathcal{D}(N)} \pi_i v_{Nj} 
    &= \sum_{i\in \mathcal{D}(N)} \pi_i x_{it(N)j}^{k+1} \\
&& \Leftrightarrow && 
    \pi(N) v_{Nj} &= \sum_{i\in \mathcal{D}(N)} \pi_i x_{it(N)j}^{k+1} \\
&& \Leftrightarrow && 
    v_{Nj} &= \frac{1}{\pi(N)}\sum_{i\in \mathcal{D}(N)} \pi_i x_{it(N)j}^{k+1}.
\end{align*}
Therefore, the calculation of $v^{k+1}$ may be expressed as
\begin{equation} \label{phprojformula}
(\forall\,N\in \mathcal{U}) \;
\big(\forall\,i\in \mathcal{D}(N)\big) \;
(\forall\,j\in 1..n_{t(N)}) \quad
v_{it(N)j}^{k+1} = \frac{1}{\pi(N)}
   \sum_{i'\in \mathcal{D}(N)} \pi_{i'} x_{i't(N)j}^{k+1}.
\end{equation}
That is, at each non-leaf node $N \in \mathcal{U{}}$, the vectors
$v_{it(N)}^{k+1}, i\in\mathcal{D}(N)$ are obtained by taking the
$\pi$-weighted average, across the scenarios in $\mathcal{D}(N)$, of the
corresponding elements in $x^{k+1}$.

The remaining two steps of the algorithm,
\eqref{admmzogsp}-\eqref{admmupdateogsp}, may then be straightforwardly
implemented by
\allowdisplaybreaks[0]
\begin{align}
(\forall\,i\in 1..\ell) \;
\big(\forall\,t\in 1..(T-1)\big) \;
(\forall\,j \in 1..n_t) \quad
z_{itj}^{k+1} &= \nu_k v_{itj}^{k+1} + (1-\nu_k) z_{itj}^k 
  \label{phz1} \\
p_{itj}^{k+1} &= p_{itj}^k + \nu_k \pi_i \rho_{tj} \big(x_{itj}^{k+1} - v_{itj}^{k+1}\big).
  \label{phupdatep}
\end{align}
Finally, to arrive at a method more closely matching the original formulation
of PH in~\cite{RW91}, one may replace the dual variables $p^k$ with variables
$w^k$ such that $w_i^k = (1/\pi_i) p_i^k$ for all $k\geq 0$ and $i\in
1..\ell$. These rescaled Lagrange multiplier estimates reside in the subspace
$\mathcal{N}^*$ of $\real^m$ defined by
\begin{align} \label{defweightedcomplement}
    \mathcal{N}^* &\doteq \set{w\in \real^n}{ (\forall\,N\in\mathcal{U}) \;
                                              \sum_{i \in \mathcal{D}(N)} \!\!\!
                                                \pi_i w_{it(N)} = 0 } \\
    &= \set{w\in \real^n}{ (\forall\,N\in\mathcal{U}) \; 
                           (\forall\,j \in 1..n_{t(N)}) \;
                                              \sum_{i \in \mathcal{D}(N)} \!\!\!
                                                \pi_i w_{it(N)j} = 0 }.
                \nonumber
\end{align}
Then, replacing $p_i^k$ with $\pi_i w_i^k$ (since $w_i = (1/\pi_i) p_i$) and
dividing through by $\pi_i$, the $f_i$ minimizations may be written
\pagebreak[2]
\begin{align*}
(\forall\,i\in 1..\ell) \quad
x_i^{k+1} &\in 
   \approxmin{\tau_{ik}}{x\in\real^{\bar n}} 
         \left\{\pi_i f_i(x_{i}) +
                \inner{\pi_i w^k_{i}}{\overline{M} x_{i}} + 
                \frac{\pi_i}{2} \norm{\overline{M} x_{i} - z^k_{i}}_{\Rho}^2 \right\} \\
    &= \approxmin{\left(\frac{\tau_{ik}}{\pi_i}\right)}{x\in\real^{\bar n}} 
          \left\{f_i(x_{i}) +
                \inner{w^k_{i}}{\overline{M} x_{i}} + 
                \frac{1}{2} \norm{\overline{M} x_{i} - z^k_{i}}_{\Rho}^2 \right\} \\
    &= \approxmin{\sigma_{ik}}{x\in\real^{\bar n}} 
         \left\{f_i(x_{i}) +
                \sum_{t=1}^{T-1}
                \sum_{j=1}^{n_t}
                \left( w^k_{itj} x_{itj} + 
                \frac{\rho_{tj}}{2} (x_{itj} - z^k_{itj})^2 \right) \right\},
\end{align*}
where $\sigma_{ik} \doteq \tau_{ik}/\pi_i$ for all $i\in 1..\ell$ and $k\geq
0$. These scaled tolerances meet the same summability assumptions as the
$\tau_{ik}$.  The same substitution of $\pi_i w_i^k$ for $p_i^k$ and
division-by-$\pi_i$ operations applied to the multiplier update $p_{itj}^{k+1}
= p_{itj}^k + \nu_k \pi_i \rho_{tj}
\big(x_{itj}^{k+1} - v_{itj}^{k+1}\big)$ result in
\begin{align*}
(\forall\,i\in 1..\ell) \;
\big(\forall\,t\in 1..(T-1)\big) \;
(\forall\,j \in 1..n_t) \qquad
w_{itj}^{k+1} &= w_{itj}^k + \nu_k \rho_{tj} \big(x_{itj}^{k+1} - v_{itj}^{k+1}\big).
\end{align*}
Summarizing, the entire method with the rescaled dual variables is
\begin{align}
x_i^{k+1}  
   &\in \approxmin{\sigma_{ik}}{x\in\real^{\bar n}} 
         \left\{f_i(x_{i}) +
                \sum_{t=1}^{T-1}
                \sum_{j=1}^{n_t}
                \left( w^k_{itj} x_{itj} + 
                \frac{\rho_{tj}}{2} \big(x_{itj} - z^k_{itj}\big)^2 \right) \right\}
    && \forall\,i \in 1..\ell \label{phmin} \\
v_{it(N)j}^{k+1} &= \frac{1}{\pi(N)}\sum_{i'\in \mathcal{D}(N)} \pi_i x_{i't(N)j}^{k+1}
    && \forall\, N, i, j \label{phproj} \\
z_{itj}^{k+1} &= \nu_k v_{itj}^{k+1} + (1-\nu_k) z_{itj}^k 
    && \forall\, i, t, j \label{phz} \\
w_{itj}^{k+1} &= w_{itj}^k + \nu_k \rho_{tj} \big(x_{itj}^{k+1} - v_{itj}^{k+1}\big)
    && \forall\, i, t, j. \label{php}
\end{align}
The quantifiers in~\eqref{phproj} are abbreviated for readability and are
identical to those at the beginning of~\eqref{phprojformula}.  The quantifiers
in~\eqref{phz} and~\eqref{php} are similarly condensed and in both cases their
full forms are $\forall\, i\in 1..\ell, \forall\,t\in 1..(T-1), \forall\, j\in
1..n_t$. The algorithm is the progressive hedging method of~\cite{RW91} with
two generalizations: the presence of the overrelaxation factors $\nu_k$ and
inexact solution of the scenario minimizations subject to the objective gap
tolerances $\sigma_{ik}$.  With the rescaled multipliers, the scenario
probabilities $\pi_i$ appear only in the projection step~\eqref{phproj}, as
in~\cite{RW91}.  For each $i\in 1..\ell$, the ojective tolerance sequences
$\{\sigma_{ik}\}_{k=0}^{\infty}$ should be nonnegative and summable. When
$\nu_k \equiv 1$, the sequences $\{z^k\}$ and $\{v^k\}$ become identical, and
$\{v^k\}$ may be eliminated, matching the original algorithm in~\cite{RW91}.
The minimization step in~\eqref{phmin} may be more compactly expressed as
\begin{align}
x_i^{k+1}  
   &\in \approxmin{\sigma_{ik}}{x\in\real^{\bar n}} 
         \left\{f_i(x_{i}) + \inner{w_i^k}{\overline M x_i} 
                 + \frac{1}{2} \norm{\overline M x_i - z_i^k}_{\Rho}^2 \right\}
    && \forall\,i \in 1..\ell, \label{phmin2} 
\end{align}
recalling that the linear operator $\overline M$ discards the last $n_T$
elements of its argument, leaving only the variables associated with the first
$T-1$ stages.

\begin{proposition} \label{prop:convergePH} 
Consider a stochastic programming problem expressed as~\eqref{abstractSP},
with the accompanying notation earlier in this section.  For each $i\in
1..\ell$, let $\{\sigma_{ik}\}_{k=1}^{\infty}, \subset \real_+$ be a sequence
such that $\sum_{k=1}^\infty \sqrt{\sigma_{ik}} < \infty$, and suppose that
$\{\nu_k\}_{k=0}^\infty \subset \real$ is such that $\inf_k \nu_k > 0$ and
$\sup_k \nu_k < 2$. For any constant scalar $c>0$ and arbitrary given initial
$z^0 \in \mathcal{N}, w^0 \in \mathcal{N}^*$, where $\mathcal{N}^*$ is as
defined in~\eqref{defweightedcomplement}, suppose that the sequences
$\{z^k\}_{k=0}^\infty, \{v^k\}_{k=1}^\infty \subset \mathcal{N}$,
$\{w^k\}_{k=0}^\infty \subset \mathcal{N}^*$ and $\{x^k\}_{k=1}^\infty \subset
\real^n$ conform to the recursions~\eqref{phmin}-\eqref{php} for all $k \geq
0$.  

If strong duality holds, then $\{w^k\}$ converges to some $w^* \in
\mathcal{N}^*$ such that $p^* \in \mathcal{N}\orthog$ defined by $p_{ijt}^*
\doteq \pi_i w_{itj}^*$ for all $i \in 1..\ell$, $t \in 1..(T-1)$, and $j \in
1..n_t$ is a solution to the dual problem~\eqref{stochProgDual}, while
$\{z^k\}$, $\{v^k\}$, and $\{Mx^k\}$ converge to some $z_i^*\in\mathcal{N} $
such that
\begin{equation} \label{spsubgrad1}
(\forall\,i\in 1..\ell) \quad
-z_i^* \in \partial \big((\pi_i f_i)^* \circ (-\overline M\transpose)\big)(\pi_i w_i^*).
\end{equation}
If the regularity condition $\ri \im \partial f \cap \im M\transpose \neq
\emptyset$ is also true, then $z^* = Mx^*$, where $x^*$ is some solution of
the primal problem $\min_{x\in\real^n} \set{f(x)}{Mx \in \mathcal{N}}$.  In
this case, all limit points of $\{x^k\}$ are optimal primal solutions and
\begin{equation} \label{spsubgrad2}
    (\forall\,i \in 1..\ell) \quad
    (\exists\,x_{iT}^* \in \real^{n_T}) \quad
    (-w_i^*,0) \in \partial f_i(z_i^*, x_{iT}^*).
\end{equation}
If the dual problem has no solution but there exists some $\bar p \in \real^m$
at which the dual objective is finite, at least one of the sequences $\{w^k\}$
or $\{z^k\}$ must be unbounded.  
\end{proposition}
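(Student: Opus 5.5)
The plan is to obtain Proposition~\ref{prop:convergePH} as a direct specialization of Proposition~\ref{prop:convergeADMMogsp}, applied with $V=\mathcal{N}$, $c=1$, $f$ as in~\eqref{defspf}, $M$ as in~\eqref{fulldropdef}, and $W$ as in~\eqref{spscaling}.

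\textbf{Step 1: identify the iterates.} Set $p_i^k \doteq \pi_i w_i^k$ for all $i$ and $k$. I will check that $w^k\in\mathcal{N}^*$ if and only if $p^k\in\mathcal{N}\orthog$. By the definition of $\mathcal{N}$, a vector $p$ lies in $\mathcal{N}\orthog$ exactly when $\sum_{i\in\mathcal{D}(N)} p_{it(N)j}=0$ for every non-leaf node $N$ and every index $j$. Substituting $p_{it(N)j}=\pi_i w_{it(N)j}$ turns this into the condition in~\eqref{defweightedcomplement}. I then set the subproblem tolerances to $\tau_{ik}\doteq\pi_i\sigma_{ik}$ and $\tau_k\doteq\sum_i\tau_{ik}$.

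The summability hypothesis carries over because of the inequality $\sqrt{\sum_i a_i}\le\sum_i\sqrt{a_i}$. It gives
\[
\sum_k\sqrt{\tau_k}\;\le\;\sum_i\sqrt{\pi_i}\sum_k\sqrt{\sigma_{ik}}\;<\;\infty .
\]
The derivation preceding~\eqref{phmin} then shows that~\eqref{phmin}--\eqref{php} are exactly~\eqref{admmxogsp}--\eqref{admmupdateogsp} for these choices. Two facts feed into this. The per-scenario approximate minimizations sum to a $\tau_k$-approximate minimization of the separable objective, since the sum of the individual objective gaps is at most $\sum_i\tau_{ik}$. The $W$-projection onto $\mathcal{N}$ is the $\pi$-weighted nodal average~\eqref{phprojformula}. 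Since $c$ appears in the statement only as a dummy, I will take $c=1$, as in that derivation.

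\textbf{Step 2: transfer the conclusions.} Proposition~\ref{prop:convergeADMMogsp} gives convergence of $p^k$ to a dual solution $p^*\in\mathcal{N}\orthog$. Hence $w^k=(p_i^k/\pi_i)_i$ converges to $w^*$, and $w^*\in\mathcal{N}^*$ because that set is closed. The same proposition gives convergence of $z^k$, $v^k$ and $Mx^k$ to a common $z^*\in\mathcal{N}$ with $-z^*\in\partial(f^*\circ(-M\transpose))(p^*)$.

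To get the blockwise statement~\eqref{spsubgrad1}, I note that $f^*(-M\transpose p)=\sum_i(\pi_if_i)^*(-\overline M\transpose p_i)$ is a separable sum of functions of the blocks $p_i$. The subdifferential of a separable sum is the product of the blockwise subdifferentials. This yields $-z_i^*\in\partial\big((\pi_if_i)^*\circ(-\overline M\transpose)\big)(\pi_iw_i^*)$.

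The regularity case, the optimality of limit points of $\{x^k\}$, and the unboundedness alternative all transfer directly. For the last of these, boundedness of $\{p^k\}$ is equivalent to boundedness of $\{w^k\}$, since the $\pi_i$ are fixed and positive.

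\textbf{Step 3: the subgradient condition~\eqref{spsubgrad2}.} In the regular case, $z^*=Mx^*$ with $x^*$ primal optimal. By Proposition~\ref{prop:fenchobj}\ref{prop:fenchobj:primalsuff} applied with the indicator $g$, this gives $-M\transpose p^*\in\partial f(x^*)$. The function $f$ is separable across scenarios, and $\partial(\pi_if_i)=\pi_i\partial f_i$. Therefore
\[
-\overline M\transpose p_i^*=(-\pi_iw_i^*,0)\in\pi_i\partial f_i(x_i^*),
\]
and dividing by $\pi_i$ gives $(-w_i^*,0)\in\partial f_i(x_i^*)$. Writing $x_i^*=(z_i^*,x_{iT}^*)$, which is valid because $\overline Mx_i^*=z_i^*$, gives~\eqref{spsubgrad2}.

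\textbf{Main obstacle.} The step needing the most care is the passage from the aggregated inclusion to its blockwise forms in Steps 2 and 3. I plan to justify it by the elementary fact that for a separable function $\sum_i h_i(y_i)$, the subdifferential is the product $\prod_i\partial h_i(y_i)$; this holds without any qualification condition. The remaining work is bookkeeping with the index conventions and the scaling by the $\pi_i$.
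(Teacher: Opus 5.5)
Your proposal is correct. Steps~1 and~2 match the paper's proof: the substitutions $p_i^k=\pi_i w_i^k$ and $\tau_{ik}=\pi_i\sigma_{ik}$, the reduction to Proposition~\ref{prop:convergeADMMogsp}, and the separable-sum rule for~\eqref{spsubgrad1}. Your bound $\sum_k\sqrt{\tau_k}\le\sum_i\sqrt{\pi_i}\sum_k\sqrt{\sigma_{ik}}$ is more careful than the paper. The paper only remarks that the $\tau_{ik}$ inherit summability from the $\sigma_{ik}$, whereas Proposition~\ref{prop:convergeADMMogsp} needs square-root summability of the aggregate $\tau_k$.

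The genuine difference is in Step~3. The paper obtains~\eqref{spsubgrad2} on the dual side. The regularity condition gives the exact chain rule~\eqref{nicesubgrad}, which applied blockwise to~\eqref{spsubgrad1} yields some $x_{iT}^*$ with $(z_i^*,x_{iT}^*)\in\partial(\pi_i f_i)^*\big((-p_i^*,0)\big)$. Conjugate subgradient inversion then gives $(-p_i^*,0)\in\pi_i\partial f_i(z_i^*,x_{iT}^*)$. You instead work on the primal side, via the optimality condition~\ref{prop:fenchobj:primalsuff} of Proposition~\ref{prop:fenchobj} and the separability of $\partial f$.

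Your route is shorter and avoids a second use of the chain rule and of conjugate inversion. It also identifies $x_{iT}^*$ concretely as the last-stage block of the primal solution $x^*$ with $Mx^*=z^*$. The paper's route derives~\eqref{spsubgrad2} purely from the dual inclusion~\eqref{spsubgrad1}, without needing any fact about the particular pair $(x^*,p^*)$.

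That is the one step you should make explicit. Proposition~\ref{prop:fenchobj} delivers~\ref{prop:fenchobj:primalsuff} only once you know $P(x^*)+Q(p^*)=0$. This holds because, under strong duality, every primal-optimal, dual-optimal pair has zero gap. Alternatively, the proof of Proposition~\ref{prop:convergeADMM} shows that this particular pair satisfies~\ref{prop:fenchobj:dualsuff}.
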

\begin{proof}
Let $\tau_{ik} \doteq \pi_i \sigma_{ik}$ and $p_i^k \doteq \pi_i w_i^k$ for
all $i\in 1..\ell$ and $k \geq 0$.  Also define the matrix $\Pi \doteq
\diag(\pi_1\identity_{\bar m},\ldots,\pi_{\ell}\identity_{\bar m})$, so that
$p^k \doteq \big\{ (p_1^k,\ldots,p_\ell^k)\big\} = \Pi w^k$ for all $k\geq 0$.

Then $\{p^k\}$, $\{v^k\}$, $\{z^k\}$, and $\{x^k\}$ evolve according to the
recursions~\eqref{phmin1}-\eqref{phupdatep}, with the sequences
$\{\tau_{ik}\}_{k=0}^{\infty}$ being summable since the
$\{\sigma_{ik}\}_{k=0}^{\infty}$ are summable. Thus,
Proposition~\ref{prop:convergeADMMogsp} applies with $V=\mathcal{N}$ and the
particular choices of $f$ in~\eqref{defspf} and $M$ in~\eqref{fulldropdef}.
If strong duality holds, Proposition~\ref{prop:convergeADMMogsp} asserts that:
\begin{enumerate}[nosep]
\item $\{p^k\}$ converges to a dual solution $p^* \in \mathcal{N}\orthog$.  It
is then immediate that $\{w^k\} = \{\Pi^{-1} p^k\}$ converges to $w^* \doteq
\Pi^{-1} p^* \in \mathcal{N}^*$ with the claimed properties.

\item \label{item:ph:primalConverge} $\{z^k\}$, $\{v^k\}$, and $\{Mx^k\}$ converge to some $z^* \in
\mathcal{N}$ such that $-z^* \in \partial\big(f^* \circ
(-M\transpose)\big)(p^*)$.  Substituting $p^* = \Pi w^*$ in this inclusion and
expanding the chosen form of $f$ from~\eqref{defspf} yields the claimed
inclusion~\eqref{spsubgrad1} when coupled with standard results for
subgradients of separable sums of functions, for example~\cite[Proposition
16.9]{BauComBook}.

\item When the regularity condition also holds, $z^*$ is of the form $Mx^*$,
where $x^*$ is an optimal primal solution, and all limit points of $\{x^k\}$ are
primal optimal solutions.
\end{enumerate}
Turning to~\eqref{spsubgrad2}, the regularity condition $\ri \im \partial f
\cap \im M\transpose \neq \emptyset$ implies, as above
in~\eqref{nicesubgrad}, that $\partial\big(f^* \circ (-M
\transpose) \big) = -M \circ \partial f^* \circ (-M \transpose)$.  Therefore, again using the specific choices of $f$ and $M$, \eqref{spsubgrad1} implies for all $i\in 1..\ell$ that
\begin{align*}
&& &&
-z_i^* &\in -M \partial \big(\pi_i f_i)^* (-\overline M\transpose p_i^*) \\
\Leftrightarrow && && 
 z_i^* &\in M \partial \big(\pi_i f_i)^* (-\overline M\transpose p_i^*) \\
\Leftrightarrow && 
  (\exists\,x_{iT}^* \in \real^{n_T}) &&
  (z_i^*, x_{iT}^*) &\in \partial (\pi_i f_i)^*\big((-p_i^*,0)\big) \\
\Leftrightarrow && 
  (\exists\,x_{iT}^* \in \real^{n_T}) &&
  (-p_i^*,0) &\in \partial (\pi_i f_i)(z_i^*, x_{iT}^*) 
  \quad\; \text{\cite[Corollary 23.5.1]{Roc70book}} \\
\Leftrightarrow && 
  (\exists\,x_{iT}^* \in \real^{n_T}) &&
  (-p_i^*,0) &\in \pi_i \partial f_i(z_i^*, x_{iT}^*) \\
\Leftrightarrow && 
  (\exists\,x_{iT}^* \in \real^{n_T}) &&
  \big(-(1/\pi_i) p_i^*,0\big) &\in \partial f_i(z_i^*, x_{iT}^*),
\end{align*}
which is equivalent to~\eqref{spsubgrad2} since $p_i^* = \pi_i w_i^*$.

Finally, consider the second alternative in the hypothesis, that the dual
function is finite somewhere but has no solution.  Then
Proposition~\ref{prop:convergeADMMogsp} asserts that at least one of $\{p^k\}$
or $\{z^k\}$ is unbounded.  Since $w^k = \Pi^{-1} p^k$ for all $k$ and
$\Pi^{-1}$ is nonsingular, unboundedness of $\{p^k\}$ is equivalent to
unboundedness of $\{w^k\}$, so at least one of $\{w^k\}$ or $\{z^k\}$ must be
unbounded.
\end{proof}

The regularity condition $\ri \im \partial f \cap \im M\transpose \neq
\emptyset$ may appear somewhat technical and difficult to verify in practice.
However, it automatically satisfied in a case that covers many practical
applications, namely when the the set of feasible solutions to each scenario
subproblem is bounded.  This condition is equivalent to $\dom f_i$ being
bounded for each $i\in 1..\ell$.  In turn, the effective domain $\dom f =
\bigtimes_{i=1}^{\ell} \dom f_i$ of $f$ given in~\eqref{defspf} is  bounded.
Then $\dom \partial f \subseteq \dom f$ is bounded, and standard results for
surjectiveness of monotone operators such as in~\cite{Rock69} imply that $\im
\partial f = \real^n$, leading to $\ri \im \partial f \cap \im M\transpose =
\real^n \cap \im M\transpose = \im M\transpose \neq \emptyset$, so the
regularity condition holds.

\subsection{Applying the ALM}

Using the same choice of $W$, a conceivable alternative to the generalized PH
method~\eqref{phmin}-\eqref{php} is to apply the ALM
method~\eqref{lessabsapproxmin}-\eqref{lessabsupdate} from
Section~\ref{sec:subspaceCon}.  The recursions
in~\eqref{lessabsapproxmin}-\eqref{lessabsupdate}, setting $V = \mathcal{N}$,
require formulas for $\big(\sdist{W\!}_{\mathcal{N}}(Mx)\big)^2$ and
$\sproj{W\!}_{\mathcal{N}\conjspace{W}}(Mx)$.  Exploiting the formula already
developed for $\sproj{W\!}_{\mathcal{N}}(Mx)$ in~\eqref{phprojformula}, one has,
for any $x\in \real^n$, that
\begin{align*}
\Big( \sdist{W\!}_V(Mx) \Big)^2 &= \norm{\sproj{W\!}_{\mathcal{N}\conjspace{W}}(Mx)}_W^2 \\
&= \norm{Mx - \sproj{W\!}_{\mathcal{N}}(Mx)}_W^2 \\
&= \sum_{N\in\mathcal{U}}
   \sum_{j=1}^{n_{t(N)}} 
   \sum_{i \in \mathcal{D}(N)}       
             \pi_i \rho_{t(N)j} 
    \left(x_{it(N)j} 
       - \frac{1}{\pi(N)} \sum_{i'\in \mathcal{D}(N)} \pi_{i'} x_{i't(N)j}\right)^2 \\
&= \sum_{N\in\mathcal{U}}
   \sum_{j=1}^{n_{t(N)}} 
   \sum_{i \in \mathcal{D}(N)}       
             \pi_i \rho_{t(N)j} \big(x_{it(N)j} - a_{Nj}(x)\big)^2,
\end{align*}
with the definition
\begin{equation} \label{averageop}
(\forall\,N \in \mathcal{U}) \;\;
\big(\forall\,j \in 1..t(N)\big) \;\;
(\forall\,x\in \real^n) \quad
a_{Nj}(x) \doteq \frac{1}{\pi(N)} \sum_{i\in \mathcal{D}(N)} \pi_{i} x_{it(N)j},
\end{equation}
that is, $a_{Nj}(x)$ is the probability-weighted average of decision variable
$j$ at tree node $N$. Using the formula for
$\big(\sdist{W\!}_{\mathcal{N}}(Mx)\big)^2$, along with the specific forms of
$f$, $M$, and $W$, makes the minimand in~\eqref{lessabsapproxmin} equal to
\begin{multline*}
\sum_{i=1}^{\ell} \big(
   \pi_i f_i(x_{i}) + \inner{p_i^k}{x_i}
   \big)
   + \frac{c_k}{2}
     \sum_{N\in\mathcal{U}}
     \sum_{j=1}^{n_{t(N)}} 
     \sum_{i \in \mathcal{D}(N)}  
           \pi_i \rho_{t(N)j}
           \left(
            x_{it(N)j} 
              - a_{Nj}(x)
           \right)^2 \\
= 
\sum_{i=1}^{\ell} \left(
   \pi_i f_i(x_{i}) + 
   \sum_{t=1}^T
   \sum_{j=1}^{n_T}
     p_{itj}^k x_{itj} 
       + \frac{c_k \pi_i \rho_{tj}}{2} 
           \left(
            x_{itj} 
              - a_{N(i,t)j}(x)
           \right)^2 \right),
\end{multline*}
where $N(i,t)$ is the unique time-$t$ tree node from which leaf scenario $i$
is a descendant.  Making similar substitutions into~\eqref{lessabsupdate}, one
arrives at the algorithm recursions, for all $k \geq 0$, 
\begin{align}
x^{k+1} &\in \approxmin{\delta_k}{x\in \real^n}
      \left\{
         \sum_{i=1}^{\ell} \left(
   \pi_i f_i(x_{i}) + 
   \sum_{t=1}^T
   \sum_{j=1}^{n_T}
     p_{itj}^k x_{itj} 
       + \frac{c_k \pi_i \rho_{tj}}{2} 
           \left(
            x_{itj} 
              - a_{N(i,t)j}(x)
           \right)^2 \right) \right\}  \label{spalmminp}  \\
p_{itj}^{k+1} &= p_{itj}^k + \nu_k \pi_i \rho_{t(N)j} \left(
                        x_{itj}^{k+1} 
                          - a_{N(i,t)j}(x^{k+1})
                       \right)
                       \quad (\forall\, i\in 1..\ell) \;
                             (\forall\, t\in 1..T) \;
                             (\forall\, j \in 1..n_t).  \label{spalmupdatep}
\end{align}
The convergence properties of this method are given by
Proposition~\ref{prop:absALMapproxObj}.  Rescaling the Lagrange multiplier estimates as $w_i^k = (1/\pi_i) p_i^k$ for all $k\geq 0$ and $i\in 1..\ell$ produces the equivalent method
\begin{align}
x^{k+1} &\in \approxmin{\delta_k}{x\in \real^n}
      \left\{
         \sum_{i=1}^{\ell} \pi_i \left(
           f_i(x_{i}) + 
           \sum_{t=1}^T
           \sum_{j=1}^{n_T}
             w_{itj}^k x_{itj} 
               + \frac{c_k \rho_{tj}}{2} 
                   \left(
                    x_{itj} 
                      - a_{N(i,t)j}(x)
                   \right)^2 \right) 
      \right\}  \label{spalmmin} \\
w_{itj}^{k+1} &= w_{itj}^k + \nu_k \rho_{t(N)j} \left(
                        x_{itj}^{k+1} 
                          - a_{N(i,t)j}(x^{k+1})
                       \right)
                       \quad (\forall\, i\in 1..\ell) \;
                             (\forall\, t\in 1..T) \;
                             (\forall\, j \in 1..n_t), \label{spalmupdate}
\end{align}
which more closely resembles the usual presentation of progressive hedging.

Ordinarily, this method is unappealing for large-scale or parallel computation
because the subproblem objective in the minimization step is not separable.
Specifically, expanding the formula for $a_{N(i,t)j}(x)$ reveals that for every
non-leaf node $N \in \mathcal{U}$, leaf scenarios $i,i' \in \mathcal{D}(N)$
with $i\neq i'$, and $j\in 1..n_{t(N)}$, the subproblem objective contains a
``cross'' term proportional to $x_{it(N)j} x_{i't(N)j}$. Thus, such methods
are typically not considered, even though, being based on the ALM rather than
the ADMM, they potentially require many fewer iterations than PH.  However,
the next section will formulate a plausible application.

\section{Stochastic mixed-integer programs and \\ Frank-Wolfe subproblem solvers}
Now consider stochastic programming problems expressed in the
form~\eqref{abstractSP}, in the special case that
\begin{equation} \label{convexifiedfi}
(\forall\,i\in 1..\ell) \quad \quad
f_i(x_i) \doteq
\begin{cases}
h_i(x_i), \;\; & x_i \in \conv K_i \\
+\infty, & \text{otherwise,}
\end{cases}
\end{equation}
where, for each $i\in 1..\ell$, the function $h_i : \real^{\bar n} \to \real$
is a continuously differentiable and convex, while $K_i \subset \real^{\bar
n}$ is a (potentially very large) finite set.  The resulting problem may be
expressed as
\begin{equation} \label{lagrel}
\begin{array}{crclll}
\displaystyle{\min_{x_1,\ldots,x_n}} & 
\multicolumn{1}{l}{\sum_{i=1}^n \pi_i h_i(x_i)} \\
\suchthat & M(x_1,\ldots,x_n) &\in &\mathcal{N} \\
& x_i &\in & \conv K_i && i=1,\ldots,\ell,
\end{array}
\end{equation}
and is a convex relaxation of the discrete stochastic programming problem
(without the ``$\conv$'' operations applied to the $K_i$)
\begin{equation} \label{discreteSP}
\begin{array}{crclll}
\displaystyle{\min_{x_1,\ldots,x_n}} & 
\multicolumn{1}{l}{\sum_{i=1}^n \pi_i h_i(x_i)} \\
\suchthat & M(x_1,\ldots,x_n) &\in &\mathcal{N} \\
& x_i &\in & K_i && i=1,\ldots,\ell.
\end{array}
\end{equation}
The optimal value of the convex problem~\eqref{lagrel} is identical to the
standard Lagrangian bound on the optimal value of the discrete
problem~\eqref{discreteSP}, namely
\begin{equation} \label{lagdual}
\max_{(p_1,\ldots,p_n)\in\mathcal{N}\orthog} \left\{
\sum_{i=1}^n 
       \min_{x_i\in K_i} \big\{ \pi_i h_i(x_i) + p_i\transpose M_i x_i \big\} \right\}.
\end{equation}
Assume that each $K_i$ takes a form such that, for any $d_i \in \real^{\bar
n}$, the discrete linear optimization problem
\begin{equation} \label{lmo}
\begin{array}{ll}
\displaystyle{\max_{x_i\in\real^{\bar n}}} & \inner{d_i}{x_i} \\
\suchthat & x_i \in K_i
\end{array}
\end{equation}
is possible to perform (although perhaps time consuming).  The principal
envisioned application is when $K_i$ consists of all integer-feasible vertex
solution to a mixed-integer linear program (MILP) with a bounded feasible set.
In this case, \eqref{lmo} may be solved by invoking a standard MILP solver,
which serves as a ``linear minimization oracle'' (LMO). In this case,
\eqref{lagdual} is a Lagrangian bound on the optimal value of a
stochastic integer programming problem, a bound that is typically far stronger
than the continuous relaxation of the stochastic program's extensive form.
Further, the primal variable values obtained in computing such bounds are
often useful in computing high-quality feasible solutions to the integer
stochastic program.  The topic of this section is formulating algorithms to
closely approximate the Lagrangian bound.

While~\eqref{lagrel} is a convex programming problem theoretically suited to
the algorithms described in the previous section, direct solution of the
resulting PH subproblems~\eqref{phmin} or ALM subproblems~\eqref{spalmmin} is
generally not possible (even inexactly) due to the lack of any tractably sized
description of the convex sets $\conv K_i$. However, when solving
linear-objective problems of the form~\eqref{lmo} is possible, one may
entertain approximately solving such subproblems by some variant of the
Frank-Wolfe (FW), as proposed in~\cite{BCDELL17}.  FW methods, which date back
to~\cite{FW56}, typically measure their progress toward optimality by an
objective gap, hence the connection to the analysis earlier in this paper; a
recent comprehensive survey of FW methods is~\cite{FWBook25}.  The following
proposition states the classic Frank-Wolfe gap calculation, also allowing for
inexact results from the LMO:

\begin{lemma} \label{lem:fwGap}
For any positive integer $q$, suppose that $\emptyset \neq K \subset \real^q$
and $h:\real^q \to \real$ is a continuously differentiable convex function.
For some $\bar x \in \conv K$ and $\epsilon \geq 0$, also suppose that $\hat x
\in \epsilon\text{-}\!\argmin_{x\in K} \inner{\nabla h(\bar x)}{x}$, that is,
\begin{equation} \label{approxMin}
\inner{\nabla h(\bar x)}{\hat x} 
   \leq \inf_{x\in K} \big\{ \inner{\nabla h(\bar x)}{x} \big\} + \epsilon.
\end{equation}
Then
\begin{equation} \label{fwgap}
h(\bar x) \leq \inf_{y\in\conv K}\big\{ h(y) \big\} 
             + \inner{\nabla h(\bar x)}{\bar x - \hat x} + \epsilon.
\end{equation}
The quantity $\gamma \doteq \inner{\nabla h(\bar x)}{\bar x - \hat x} +
\epsilon$ on the right side of~\eqref{fwgap} must be nonnegative.  If $\gamma=0$,
then $\bar x$ minimizes $h$ over $\conv K$.  If $\epsilon=0$, then,
conversely, $\bar x$ minimizing $h$ over $\conv K$ implies that $\gamma =
\inner{\nabla h(\bar x)}{\bar x - \hat x} = 0$.
\end{lemma}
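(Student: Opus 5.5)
The plan is to combine the gradient inequality for the convex function $h$ with the fact that minimizing a linear function over $K$ is the same as minimizing it over $\conv K$.

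\textbf{Step 1 (linear functions over hulls).} For any $d\in\real^q$ we have $\inf_{x\in K}\inner{d}{x} = \inf_{y\in\conv K}\inner{d}{y}$. The ``$\geq$'' direction holds because $K\subseteq\conv K$. For ``$\leq$'', write any $y\in\conv K$ as a finite convex combination $y=\sum_j\lambda_j x_j$ with $x_j\in K$. Then $\inner{d}{y}=\sum_j\lambda_j\inner{d}{x_j}\geq\inf_{x\in K}\inner{d}{x}$.

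\textbf{Step 2 (the bound \eqref{fwgap}).} Fix any $y\in\conv K$. Since $h$ is convex and differentiable,
\[
h(y)\geq h(\bar x)+\inner{\nabla h(\bar x)}{y-\bar x}.
\]
By Step 1 with $d=\nabla h(\bar x)$, together with \eqref{approxMin}, we get $\inner{\nabla h(\bar x)}{y}\geq\inner{\nabla h(\bar x)}{\hat x}-\epsilon$. Combining the two inequalities gives
\[
h(y)\geq h(\bar x)-\inner{\nabla h(\bar x)}{\bar x-\hat x}-\epsilon.
\]
Taking the infimum over $y\in\conv K$ and rearranging yields \eqref{fwgap}.

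\textbf{Step 3 (nonnegativity of $\gamma$).} Since $\bar x\in\conv K$, we have $\inf_{y\in\conv K}h(y)\leq h(\bar x)$, and this infimum is finite. Rearranging \eqref{fwgap} then gives $\gamma\geq h(\bar x)-\inf_{\conv K}h\geq 0$. (Alternatively, apply Step 1 with $y=\bar x$ directly.)

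\textbf{Step 4 (the two optimality statements).} If $\gamma=0$, then \eqref{fwgap} says $h(\bar x)\leq\inf_{\conv K}h$; since $\bar x\in\conv K$, it follows that $\bar x$ is a minimizer.

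For the converse, take $\epsilon=0$ and suppose $\bar x$ minimizes $h$ over the convex set $\conv K$. The first-order optimality condition then gives $\inner{\nabla h(\bar x)}{y-\bar x}\geq 0$ for all $y\in\conv K$. Applying this with $y=\hat x\in K$ gives $\gamma=\inner{\nabla h(\bar x)}{\bar x-\hat x}\leq 0$. Together with Step 3, this forces $\gamma=0$.

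The only point needing any care is Step 1, which replaces the infimum over the possibly nonconvex set $K$ by the infimum over $\conv K$. It is elementary, so I expect no real obstacle.
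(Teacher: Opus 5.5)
Your proposal is correct and follows the paper's proof essentially step for step: the gradient inequality, combined with the fact that a linear function has the same infimum over $K$ and over $\conv K$ (the paper invokes this simply ``by linearity'' while you verify it explicitly), then nonnegativity of $\gamma$ from $\bar x \in \conv K$, and finally the converse via the first-order optimality condition tested at $y = \hat x$. The one assertion you leave unproved, finiteness of $\inf_{\conv K} h$ in Step 3, follows at once from \eqref{fwgap}, which bounds that infimum below by $h(\bar x)-\gamma$; your parenthetical alternative with $y=\bar x$ avoids needing it at all.
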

\begin{proof}
First, \eqref{approxMin} may be rearranged into
\begin{equation} \label{approxMinRearranged}
\inf_{x\in K} \big\{ \inner{\nabla h(\bar x)}{x} \big\} 
   \geq \inner{\nabla h(\bar x)}{\hat x} - \epsilon.
\end{equation}
Fix any $y\in\conv K$.  Since $h$ is convex, $\nabla h(\bar x)$ is a
subgradient of $h$ at $\bar x$, so
\begin{align*}
h(y) &\geq h(\bar x) + \inner{\nabla h(\bar x)}{y - \bar x} \\
&= h(\bar x) + \inner{\nabla h(\bar x)}{y} - \inner{\nabla h(\bar x)}{\bar x} \\
&\geq h(\bar x) + \inf_{x\in \conv K}\big\{\inner{\nabla h(\bar x)}{x}\big\} 
           - \inner{\nabla h(\bar x)}{\bar x} 
   &&\big[\text{since~} y \in \conv K \big]\\
&= h(\bar x) + \inf_{x\in  K}\big\{\inner{\nabla h(\bar x)}{x}\big\} 
           - \inner{\nabla h(\bar x)}{\bar x}  
  &&\big[\text{by linearity of~} \inner{\nabla h(\bar x)}{\spcdot}\big]\\
&\geq h(\bar x) + \inner{\nabla h(\bar x)}{\hat x} - \epsilon
    - \inner{\nabla h(\bar x)}{\bar x} 
  &&\big[\text{substituting \eqref{approxMinRearranged}}\big] \\
&= h(\bar x) + \inner{\nabla h(\bar x)}{\hat x - \bar x} - \epsilon.
\end{align*}
Rearranging the resulting inequality yields $h(\bar x) \leq h(y) +
\inner{\nabla h(\bar x)}{\bar x - \hat x} + \epsilon$.
Since the choice of $y \in \conv K$ was arbitrary, one may take the infimum of
the right side of this relation over all $y \in \conv K$ to obtain
\begin{align*}
h(\bar x) &\leq \inf_{y\in \conv K} \big\{ h(y) 
                        + \inner{\nabla h(\bar x)}{\bar x - \hat x} + \epsilon \big\} \\
&= \inf_{y\in\conv K}\big\{ h(y) \big\} 
             + \inner{\nabla h(\bar x)}{\bar x - \hat x} + \epsilon,
\end{align*}
establishing~\eqref{fwgap}.  Since $\bar x \in \conv K$, the
inequality~\eqref{fwgap} would yield an immediate contradiction if
$\inner{\nabla h(\bar x)}{\bar x - \hat x} + \epsilon < 0$, establishing the
claim that $\inner{\nabla h(\bar x)}{\bar x - \hat x} + \epsilon \geq
0$.

If $\inner{\nabla h(\bar x)}{\bar x - \hat x} + \epsilon = 0$,
then~\eqref{fwgap} implies that $h(\bar x) \leq
\inf_{y\in\conv K} \big\{ h(y) \big\}$.  Since $\bar x \in \conv K$, the
immediate conclusion is that $\bar x$ minimizes $h$ over $\conv K$.

Finally, suppose that $\epsilon=0$ and that $\bar x$ minimizes $h$ over $\conv
K$.  Then the standard necessary optimality condition
\begin{align*}
&&&&
(\forall\,x\in \conv K) && \inner{\nabla h(\bar x)}{x - \bar x} &\geq 0 
&&&&
\end{align*}
for $\bar x$ to minimize the convex function $h$ over the convex set $\conv K$
yields by taking $x = \hat x$ that
\begin{align} \label{FWoptCase}
&&
\inner{\nabla h(\bar x)}{\hat x - \bar x} & \geq 0 
&&\Leftrightarrow &
\inner{\nabla h(\bar x)}{\bar x - \hat x} & \leq 0. &&
\end{align}
When $\epsilon=0$, one has $\gamma=\inner{\nabla h(\bar x)}{\bar x - \hat x}$.
It has already been established that $\gamma \geq 0$, but
now~\eqref{FWoptCase} implies that $\gamma \leq 0$. So $\gamma =
\inner{\nabla h(\bar x)}{\bar x - \hat x} = 0$.
\end{proof}
The nonnegative quantity $\gamma \doteq \inner{\nabla h(\bar x)}{\bar x - \hat x} +
\epsilon$ will here be called the \emph{inexact Frank-Wolfe gap}; when
$\epsilon=0$, it is the classical \emph{Frank-Wolfe gap} $\inner{\nabla h(\bar
x)}{\bar x - \hat x}$.  This estimate of objective suboptimality may be
obtained whenever one (approximately) minimizes over $K$ the linear function
given by the gradient $\nabla h(\bar x)$ at some point $\bar x \in \conv K$.
Since this operation is fundamental to all FW algorithm variants, all such
variants provide objective-gap suboptimality estimates.

For the stochastic mixed-integer stochastic programming applications
envisioned here, it is important in practice to allow for inexact LMOs, that
is, $\epsilon > 0$. The LMO in these applications is a MILP solver, and MILP
solvers are generally configured to terminate when they reach a specified
nonzero optimality tolerance, or ``MIPGap'', with their running time often
strongly dependent on this tolerance.  An exactly zero tolerance may be very
time-consuming to achieve and in many cases impossible to obtain with a
standard MILP solver due to numerical round-off issues.

\subsection{A progressive-hedging-based algorithm}

Algorithm~\ref{alg:FW-PH-ObjGap} presents an algorithmic template for
solving the Lagrangian relaxation problem~\eqref{lagrel} by embedding an FW
subproblem-solving procedure within the objective-gap inexact progressive
hedging method~\eqref{phmin}-\eqref{php}.  The specifics of the particular
variant of FW employed are left open and marked with asterisks.  The subproblem to be solved for each scenario $i$ and PH iteration $k$ is
\begin{equation} \label{phsubproblem}
\min_{x\in\conv K_i} 
    \left\{h_i(x_{i}) + \inner{w_i^k}{\overline M x_i} 
      + \frac{1}{2} \norm{\overline M x_i - z_i^k}_{\Rho}^2 \right\},
\end{equation}
Define $h_{ik}$ to be the objective function of this subproblem,
whose gradient is given by
\begin{align*}
\nabla h_{ik}(\bar x_i) 
    &= \nabla h_i(\bar x_i) 
          + \overline{M}\transpose w_i^k
          + \overline{M}\transpose \Rho (\overline{M}\bar x_i - z_i^k) \\
&= \nabla h_i(\bar x_i) +
    \left[
    \begin{array}{c}
    w_i^k + \Rho(\overline M \bar x_i - z_i^k) \\ 0
    \end{array}
    \right],
\end{align*}
where the ``$0$'' has dimension $n_T$.  

~

\begin{algorithm}[ht]{}
    \caption{FW-PH-ObjGap Algorithm Template} \label{alg:FW-PH-ObjGap}
    \begin{algorithmic}[1]
        \State \textbf{Initialization} Choose $w^0 \in \mathcal{N}^*$, $z^{0} \in \mathcal{N}$, and a diagonal $\bar m \times \bar m$ matrix $\Rho$
        \For {$k=0,1,2, \ldots$}
          \For {$i \in 1..\ell$}  \label{line:PHscenLoopStart}
             \State * Determine an FW starting point 
                    $\bar x_i^{k0} \in \conv K_i$ \label{line:PHFWstart}
             \Repeat ~\textbf{for} $l = 0, 1, 2, \ldots$
                \State Compute the subproblem gradient 
                       $d_i^{kl} = \nabla h_i(\bar x_i^{kl})
                           + \big( w_i^k + \Rho(\overline M \bar x_i^{kl} - z_i^k),0\big)$
                     \label{line:PHgradient}
                \State * Determine a MIPGap accuracy 
                           $\bar\epsilon_{ikl} < \sigma_{ik}$
                     \label{line:PHsetTol}
                \LongState{Call a MILP solver with a requested
                           absolute objective accuracy of 
                $\bar \epsilon_{ik}$ to find \\ $\text{~~~~~~}
                  \hat x_i^{kl} \in 
                     \epsilon_{ikl}\text{-}\!\argmin_{x_i\in K_i} 
                                               \inner{d_i^{kl}}{x_i}$,
                            where $\epsilon_{ik} \leq \bar \epsilon_{ik}$}
                     \label{line:PHMILPPsolve}
                \State Compute the scenario-$i$ FW gap $\gamma_{ikl} =
                         \inner{d_i^{kl}}{\bar x_i^{kl} - \hat x_i^{kl}} + \epsilon_{ikl}$
                \State * Determine the next FW iterate $\bar x_i^{k,l+1}$ 
                    \label{line:PHnextFW}
        \Until{$\gamma_{ikl} \leq \sigma_{ik}$} \label{line:PHbreakout}
        \State $x_i^k = \bar x_i^{kl}$ 
               $\big[$* or $x_i^k = \bar x_i^{k,l+1}$ 
                  if $h_{ik}(x_i^{k,l+1}) \leq h_{ik}(x_i^{kl})\big]$
                  \label{line:PHnextIterate}
        \EndFor ~(processing scenarios) \label{line:PHscenLoopEnd}
        \State $v^{k+1} = \sproj{W\!}_{\mathcal{N}}\big(Mx^{k+1}\big)$ 
                 \label{line:PHFWStartUpdate}
        \State $z^{k+1} = \nu_k v^{k+1} + (1-\nu_k) z^k$
        \State $w^{k+1} = w^k + \nu_k \Rho (Mx^{k+1} - v^{k+1})$ 
                \label{line:PHFWEndUpdate}
    \EndFor
    \end{algorithmic}
\end{algorithm}

~

As above, $x_i^{k+1} = (x_1^{k+1},\ldots,x_{\ell}^{k+1})$, $\overline M$
denotes removing the last-stage decision variables from a single scenario, and
$M$ denotes collectively dropping the last-stage decision variables from all
scenarios. The details of performing the update operations in
steps~\ref{line:PHFWStartUpdate}-\ref{line:PHFWEndUpdate} are
in~\eqref{phproj}-\eqref{php}.

\paragraph{Requested versus returned MIPGaps.}  The algorithm distinguishes
between the MILP solver objective gap requested, $\bar \epsilon_{ik}$, and the
actual gap reported by the MILP solver, $\epsilon_{ik}$, because it is common
for MILP solvers to return a solution together with an objective gap that is
smaller than was requested.  Since any valid objective gap $\epsilon$ may be
used in Lemma~\ref{lem:fwGap}, the FW gap calculation uses the reported,
possibly smaller gap.

\paragraph{Choosing tolerances}
For $i\in 1..\ell$, the tolerance sequences $\{\sigma_{ik}\}_{k=0}^{\infty}$
referred to in steps~\ref{line:PHsetTol} and~\ref{line:PHbreakout} should be such
that $\sum_{k=0}^{\infty}
\sqrt{\sigma_{ik}} < \infty$, in which case Proposition~\ref{prop:convergePH}
guarantees convergence of the algorithm, assuming that the innermost, FW loop
is able to progressively reduce the FW gaps so that the termination test in
step~\ref{line:PHbreakout} can always be eventually satisfied.  In practice, the
sequences $\sum_{k=0}^{\infty}
\sqrt{\sigma_{ik}} < \infty$ need not be treated as externally specified, but
can be dynamically determined in any way that satisfies the summability
conditions.  One possible way of determining these tolerances, along with the
requested MIPgap accuracies $\bar \epsilon_{ikl}$, is as follows:
\begin{itemize}
\item At iteration $0$, simply terminate the FW procedure for each scenario
$i\in 1..\ell$ after some small fixed number of iterations $l_0 \geq 1$,
performing each MILP solve to a relatively large relative MIPGap, say $1\%$.
This special treatment of the first iteration effectively sets
$\epsilon_{i0l}$ to the absolute gap implied by the relative MIPGap, and
$\sigma_{i0} = \gamma_{i0l_0}$.
\item At each subsequent iteration $k \geq 1$ for scenario $i \in 1..\ell$,
set $\sigma_{ikl} = \beta \sigma_{i0} \alpha^k$ and $\bar\epsilon_{ik} =\mu_{ik}
\sigma_{ikl}$, where $\beta > 0$, $\alpha \in (0,1)$, and $\mu_{ik} \in
(0,1)$.  These choices result in each $\{\sigma_i^k\}_{k=1}^{\infty}$ being
decreasing geometric sequence, making the $\{\sigma_i^k\}$ square-root
summable. One could choose $\beta$ to be considerably larger than $1$ and
$\alpha$ only slightly smaller than $1$, so a very small number of FW
iterations, possibly only one, should often suffice to satisfy the specified
gap.  In addition to the rquested close-to-exact relative MIPGap such as
$10^{-7}$.
\end{itemize}

The goal of such an approach is to perform a very small number of inner FW
steps for each scenario, while still guaranteeing summability of the error
sequences $\big\{ \sqrt{\sigma_{ik}} \big\}_{k=0}^\infty$, $i\in 1..\ell$.
When $\beta \gg 1$, the actual final gaps $\gamma_{ikl}$ attained could be
considerably smaller than than their upper bounds $\sigma_{ik}$; since they
would be upper bounded by square-root-summable sequences, they would
themselves of course be square root summable.  Experimentation will be needed
to determine good values of the $\mu_{ik}$, which might vary dynamically with
$k$ and $i$. Values near $1$ put relatively little pressure on the MILP solver
to produce accurate solutions, but demand more accuracy from the Frank-Wolfe
procedure, whereas values near $0$ demand high accuracy from the MILP but
relatively low accuracy from the FW procedure.

The above procedure for determining the $\sigma_{ikl}$ and
$\bar\epsilon_{ikl}$ is of course just one of many imaginable possibilities.
For example, it might be possible in some situations to determine the
$\sigma_{ikl}$ dynamically: if some MILP subproblems have achieved high
accuracy, one might be able to truncate some other, concurrently running
subproblems to lower accuracy than originally planned.

\paragraph{Which FW iterate to use as the next PH iterate.}
Once the objective gap test on line~\ref{line:PHbreakout} passes at some FW
iteration $l$, line~\ref{line:PHnextIterate} selects the next-to-last FW
iterate $\bar x_i^{kl}$ as the approximate solution to return to the PH
``outer loop'' for scenario $i$, even though a subsequent iterate $\bar
x_i^{k,l+1}$ may have already been computed.  The reason behind this choice is
that the FW gap $\gamma_{ikl}$ bounds the suboptimality of $\bar x_i^{kl}$,
not $\bar x_i^{k,l+1}$.  For some FW variants, it is conceivable that the
subproblem objective of $\bar x_i^{k,l+1}$ could be worse that that of $\bar
x_i^{kl}$, perhaps by enough to fail the FW termination test on
line~\ref{line:PHbreakout}.  Therefore, the safest general prescription is to
use $\bar x_i^{kl}$.  However, if $h_{ik}(x_i^{k,l+1}) \leq h_{ik}(x_i^{kl})$,
that is, FW iterate $l + 1$ has better subproblem objective FW iterate $l$,
then clearly the same gap is also valid for $\bar x_i^{k,l+1}$, so that
solution may be used and may in fact be preferrable. Thus,
line~\ref{line:PHnextIterate} indicates the option of using $\bar x_i^{k,l+1}$
instead of $\bar x_i^{kl}$ when doing so is admissible.

In situations in which computing the next FW iterate or making sure that its
subproblem objective is an improvement are time-consuming tasks, it could be
better to check the condition termination condition $\gamma_{ikl} \leq
\sigma_{ik}$ prior to line~\ref{line:PHnextFW}.  If the termination
condition holds, one would skip line~\ref{line:PHnextFW} and set $x_i^k = \bar
x_i^{kl}$ in line~\ref{line:PHnextIterate}.

\paragraph*{Parallel implementation.}
Parallel implementation of Algorithm~\ref{alg:FW-PH-ObjGap} may follow the
general outlines of existing approaches such as in~\cite{mpisppy23}: the
scenario calculations in the loop in
steps~\ref{line:PHscenLoopStart}-\ref{line:PHscenLoopEnd} are independent of
one another and can be performed concurrently, while the averaging operations
and multiplier updates in
steps~\ref{line:PHFWStartUpdate}-\ref{line:PHFWEndUpdate} can be implemented
using parallel reduction\footnote{See for example~\cite[Sections
4.1-4.3]{grama2003introduction}} and vector operations.  If information on
multiple scenarios is stored in the same memory address space (for example, in
the extreme case of a fully serial implementation), some extent of ``tree''
storage, rather then ``grid'' storage could be used for the $v^k$ and $z^k$,
to avoid storing necessarily identical values in multiple memory locations.

\paragraph{Computing lower bounds.}
Although the method causes $\{w^k\}$ to converge to some $w^* \in
\mathcal{N}^*$ such that $p^* \doteq \Pi w^* \in \mathcal{N}\orthog$ is dual
optimal and would thus yield the Lagrangian dual bound when letting $p = p^*$
in~\eqref{lagdual}, the algorithm does not inherently compute lower bounds on
the optimal objective value as it proceeds.  Clearly, an auxiliary
computation of the form~\eqref{lagdual} could provide a bound, setting $p =
\Pi w^{k_{\last}}$, where $k_{\last}$ is the last iteration before deciding to
terminate the method.  A sequence of Lagrangian bounds could similarly be
obtained as the algorithm runs by periodically solving problems of the
form~\eqref{lagdual} with $p$ set to $p^k \doteq \Pi w^k$, a technique
originally suggested in~\cite{GHRWWW2016}. Another option, used
in~\cite{BCDELL17}, is to choose the starting points $\bar x_i^{k0}$ in
step~\ref{line:PHFWstart} so that
\begin{equation*}
\left[
\begin{array}{c}
\Rho(\bar x_1^{k0} - z_1^k) \\
\vdots \\
\Rho(\bar x_{\ell}^{k0} - z_{\ell}^k)
\end{array}
\right]
\in \mathcal{N}^*
\quad \Rightarrow \quad
\hat w^k \doteq
\left[
\begin{array}{c}
w_1^k + \Rho(\bar x_1^{k0} - z_1^k) \\
\vdots \\
\hat w_{\ell}^k + \Rho(\bar x_{\ell}^{k0} - z_{\ell}^k)
\end{array}
\right]
\in \mathcal{N}^*,
\end{equation*}
since $w^k = (w_1^k,\ldots,w_{\ell}^k) \in \mathcal{N}^*$.  This condition is
equivalent to
\begin{align}
&& 
   (\forall\,N\in\mathcal{U}) \; \big(\forall\,j\in 1..t(N)\big) &&
      \sum_{i\in\mathcal{D}(N)} \!\!\!
         \pi_i \rho_{tj} (\bar x_{it(N)j}^{k0} - z_{it(N)j}^k) &= 0
      \nonumber \\
\Leftrightarrow &&
   (\forall\,N\in\mathcal{U}) \; \big(\forall\,j\in 1..t(N)\big) &&
      \sum_{i\in\mathcal{D}(N)} \!\!\!
         \pi_i \bar x_{it(N)j}^{k0} &= \pi(N) z_{Nj}^k
      \label{btweakintermediate} \\  
\Leftrightarrow && &&
    \sproj{W\!}_{\mathcal{N}}(\bar x^{k0}) &= z^k,
      \label{btweakcompact}
\end{align}
where $z_{Nj}^k$ in~\eqref{btweakintermediate} denotes the common value of the
$z_{it(N)j}^k$ for $i \in \mathcal{D}(N)$ (recalling that $z^k \in \mathcal{N}$)
and $x^{k0} \doteq (x_1^{k0},\ldots,x_{\ell}^{k0})$ in~\eqref{btweakcompact}.
When $\nu_k = 1$, this condition may be met by setting $\bar x^{k0} =
x^k$ at any iteration $k \geq 1$.

Note that $d_i^{k0} = \nabla h_i(\bar x_i^{k0}) + \hat w_i^k$ for all $i\in
1..\ell$. If one enforces~\eqref{btweakcompact}, then in the case that all the
original scenario objective functions are linear, of the form $h_i(x_i) =
\inner{\kappa_i}{\hat x_i}$ for $\kappa_i \in \real^{\bar n}$, one has
$d_i^{k0} = \kappa_i + \hat w_i^k$ for all $i$, and the MILP solves in
step~\ref{line:PHMILPPsolve} collectively compute, across all $i\in 1..\ell$,
a Lagrangian lower bound $\sum_{i=1}^{\ell} \pi_i \big( \inner{d_i^{kl}}{\hat
x_i^{k0}} - \epsilon_{ik0} \big)$.  Without assuming linearity of the original
objective, Lemma~\ref{lem:fwGap} applied to the functions $h_i + \inner{\hat
w_i^{k0}}{\spcdot}$ can be shown to provide a Lagrangian bound
$\sum_{i=1}^{\ell} \pi_i\big( h_i(\bar x_i^{k0}) - \gamma_{ik0} \big)$, which
reduces to the previously mentioned bound in the linear case.

This procedure provides an ongoing sequence of bounds with very little
apparent incremental effort.  However, it does constrain the selection of the
FW starting points in step~\ref{line:PHFWstart} of the algorithm.  Depending on
the FW variant selected, this constraint could impact the performance of the
FW sub-method, so its desirability should not be treated as a forgone
conclusion.  

\paragraph{Deciding when to terminate.}
Algorithm~\ref{alg:FW-PH-ObjGap} does not specify how to terminate the outer
loop (over $k$); any technique applicable to PH may be used for this purpose.
One standard possibility is based on measuring the primal infeasibility
$\smallnorm{Mx^k - z^k}$ and a dual feasibility estimate of how far the
current solution is from minimizing the augmented Lagrangian, which can be
computed as noted in~\cite{Boyd11} from successive ``$z$'' iterates by
$\smallnorm{M\transpose(z^{k+1} - z^k)}$.  The method may be terminated when
both these quantities are assessed to be ``small.''  

In computational settings like~\cite{mpisppy23}, where additional Lagrangian
lower bounds and heuristic upper bounds are computed as the algorithm
progresses, one may also use the difference between such upper and lower
bounds as a termination criterion.  However, this approach will never trigger
termination if the desired tolerance is smaller than the actual duality gap,
which is typically unknown.

\paragraph{Differences from~\cite{BCDELL17}.}
Algorithm~\ref{alg:FW-PH-ObjGap} resembles the FW-PH algorithm proposed
in~\cite[Algorithm 3]{BCDELL17}, but with some important differences.  The
most critical difference is in the convergence analysis: here, convergence is
established by showing that terminating the scenario subproblems with a small
enough FW gap causes the method to behave, from the first iteration, as an
application of the inexact ADMM and DR splitting.  The analysis
in~\cite{BCDELL17}, on the other hand, argues that after some finite number of
iterations, the method will start following the same path as an exact PH
method for the stochastic program~\eqref{lagrel}.  This argument is based on
the algorithm proposed in\cite{BCDELL17} using the fully corrective
(simplicial decomposition) FW method~\cite{Hol74,vHoh77,LJJ15}, keeping full
lists $V_{ik}$ of all the vertices of $K_i$ encountered through iteration $k$
for each scenario $i\in 1..\ell$.  To find the next FW iterate in the
equivalent of step~\ref{line:PHnextFW} in Algorithm~\ref{alg:FW-PH-ObjGap},
the fully corrective FW method optimizes the subproblem objective $h_{ik}$
over $\conv V_{ik}$.  Since the $K_i$ are finite, the $V_{ik} \subseteq K_i$
must eventually stabilize, so that for some $k^* \geq 1$, $V_{ik} = V_{ik'}$
for all $i\in 1..\ell$ and $k,k' \geq k^*$.  If the LMO solves and the
solutions of these auxiliary problems are exact, the resulting subproblem
solutions then become exact over convex hull of the full set of vertices
$\conv K_i$: if they were not, then new vertices would eventually be generated
and stabilization would not have occurred.  As a result, after iteration
$k^*$, the method follows the same path as exact progressive hedging
initialized from $w^{k^*}$ and $z^{k^*}$, and convergence follows from the
analysis of exact PH as in~\cite{RW91}.

Unfortunately, there is no practical way to definitively determine when
stabilization of the vertex sets $V_{ik}$ has occurred (for example,
stabilization over two successive iterations does not necessarily imply
long-term stabilization).  In the analysis of~\cite{BCDELL17}, the only formal
purpose of iterations $k < k^*$ before stabilization occurs is to discover the
necessary vertices.  This apparent ``wandering'' phase in the analysis leaves
open the theoretical possibility that the method might not produce much useful
information until stabilization, although the computational results
in~\cite{BCDELL17} suggest otherwise.  By contrast, the analysis here instead
shows that, so long as the FW gaps decrease sufficiently quickly, the method
functions as a special case of the inexact ADMM in
Proposition~\ref{prop:convergeADMM}, which is a modest generalization of the
inexact ADMM known since~\cite{EckBer92}.  This phenomon may explain the
encouraging computational results in~\cite{BCDELL17} and other applications of
the same ideas.

Since the analysis here requires only controlling the objective gap for each
subproblem, as opposed to reaching full vertex stabilization, the template in
Algorithm~\ref{alg:FW-PH-ObjGap} can be adapted to use essentially any variant
of the FW method, with no need to store sets of encountered vertices $V_{ik}$
or solve auxiliary nonlinear problems over their convex hulls. This freedom
opens many possibilities for simplifying and improving the method, although
computational experimentation will be necessary to identify the most efficient
approaches.

An additional benefit of the approach here is that the MILP solves used by the
FW method need not be exact, which is theoretically assumed in~\cite{BCDELL17}
(although likely not the case in its experimental work).  In
Algorithm~\ref{alg:FW-PH-ObjGap}, one may explicitly use positive MIPGaps,
adjusting them as the algorithm progresses.  Early on, one could use
relatively large MIPGaps since the scaled Lagrange multiplier estimates $w^k$
are likely inaccurate, and it is not worth expending the computing time needed
to find extremely accurate MILP solutions.

The main disadvantage of Algorithm~\ref{alg:FW-PH-ObjGap} is that, depending
on the method for determining the tolerances $\sigma_{ik}$, one may not simply
be able to truncate the solution of a subproblem after some fixed number of FW
iterations $l_{\max}$ (called $t_{\max}$ in~\cite{BCDELL17}, which uses the
symbol $t$ to index FW iteration).  This kind of truncation is permitted
in~\cite{BCDELL17}, although it introduces significant complication in the
analysis and in the case of $l_{\max} = 1$ requires technical recourse
assumptions on the problem instance to guarantee convergence.  However, the
hope here is that since summability the square roots of the objective
tolerances $\{\sigma_{ik}\}_{k=0}^{\infty}$ can be made weak requirement in
practice, the occasional added FW inner-loop iterations that might be imposed
will be a small price to pay for the ``non-wandering'' convergence theory and
freedom in choosing FW variants provided by methods following the template in
Algorithm~\ref{alg:FW-PH-ObjGap}.  The methods proposed here also use FW gaps
adaptively, setting the number of FW iterations in response to the current
subproblem accuracy, rather than truncating after a fixed number of FW steps
in most iterations.

\subsection{Alternatives based on inexact augmented Lagrangian methods}
\label{sec:stochProgFWALM}
This subsection presents a method for the same application as
Algorithm~\ref{alg:FW-PH-ObjGap}, but based on the augmented Lagrangian
method~\eqref{spalmmin}-\eqref{spalmupdate} instead of the PH
method~\eqref{phmin}-\eqref{php}. Ordinarily, such approaches are impractical
because the subproblem objective in~\eqref{spalmmin} is not separable, so
minimizing it could be essentially as hard as optimizing the extensive form of
the original stochastic programming problem.  However, the situation is
different when employing a Frank-Wolfe subproblem solver.  Consider applying a
Frank-Wolfe algorithm to solve~\eqref{spalmmin}, with the $f_i$ defined as
in~\eqref{convexifiedfi}.  For this form of the $f_i$, one may
rewrite the ALM subproblem~\eqref{spalmmin} as
\begin{equation} \label{fwalm1}
x^{k+1} \in \approxmin{\delta_k}
                      {\substack{x_1\in \conv K_1 \\ 
                                 \svdots \\ 
                                 x_{\ell} \in \conv K_{\ell}}} \!\!
      \left\{
         \sum_{i=1}^{\ell} \pi_i \left(
           h_i(x_{i}) + 
           \sum_{t=1}^T
           \sum_{j=1}^{n_T}
             w_{itj}^k x_{itj} 
               + \frac{c_k \rho_{tj}}{2} 
                   \left(
                    x_{itj} 
                      - a_{N(i,t)j}(x)
                   \right)^2 \right)
      \right\}.
\end{equation}
This problem's feasible set is a Cartesian product, but its objective is not
separable over the components of the Cartesian product.  FW methods can induce
decomposition over such structures.

Within iteration $k$ of the augmented Lagrangian
method~\eqref{spalmmin}-\eqref{spalmupdate}, suppose the algorithm is at
Frank-Wolfe iteration $l$ and some trial solution $x^{kl} \in \real^n$.
The Frank-Wolfe method would then determine the gradient $y^{kl} =
(y^{kl}_1,\ldots,y^{kl}_{\ell})$ of the differentiable function
in~\eqref{fwalm1} and then solve the linear subproblem
\begin{equation*}
x^{k,l+1} \in \approxmin{\sigma_{kl}}
                      {\substack{x_1\in K_1 \\ \svdots \\ x_{\ell} \in K_{\ell}}}
      \Big\{
        \biginner{(y^{kl}_1,\ldots,y^{kl}_{\ell})}{(x_1,\ldots,x_{\ell})}
      \Big\}
      =
      \approxmin{\sigma_{kl}}
                      {\substack{x_1\in K_1 \\ \svdots \\ x_{\ell} \in K_{\ell}}}
        \left\{
            \sum_{i=1}^{\ell} \inner{y^{kl}_i}{x_i}
         \right\},
\end{equation*}
where $\sigma_{kl} \in (0,\delta_k)$ is some objective tolerance for the LMO
solution.  Due to the linear and hence fully separable nature of the objective
in the FW subproblem, this problem may be solved by performing $\ell$ independent
approximate minimizations of the form
\begin{equation*}
x_i^{k,l+1} \in \approxmin{\sigma_{ikl}}{x_i \in K_i}
      \Big\{
        \inner{y^{kl}_i}{x_i}
      \Big\},
\end{equation*}
where $\sigma_{1kl},\ldots,\sigma_{\ell k l} > 0$ are sub-tolerances such that
$\sum_{i=1}^{\ell} \sigma_{ikl} = \sigma_{kl}$.  Thus, the inseparability
of~\eqref{fwalm1} does not impede decomposition if the subproblem solution
method only uses linearizations of its objective function. In the Frank-Wolfe
context, it is thus possible to take advantage of the block structure of the
problem from an augmented Lagrangian method, and not only from an ADMM method.

Implementing such an approach requires being able to compute the
gradient of the function in~\eqref{sec:stochProgFWALM}:

\begin{lemma} \label{lem:stochGradient}
For any $k \geq 0$, let $h_k : \real^n \to \real$ denote the function
in~\eqref{fwalm1}, that is,
\begin{equation*}
h_k(x) = \sum_{i=1}^{\ell} \pi_i \left(
           h_i(x_{i}) + 
           \sum_{t=1}^T
           \sum_{j=1}^{n_T}
             w_{itj}^k x_{itj} 
               + \frac{c_k \rho_{tj}}{2} 
                   \left(
                    x_{itj} 
                      - a_{N(i,t)j}(x)
                   \right)^2 \right).
\end{equation*}
Then $\nabla h_k(x) = (\pi_1 d_1,\ldots,\pi_{\ell} d_{\ell})$, where
\begin{align} \label{almstochgrad}
z &\doteq \sproj{W\!}_{\mathcal{N}}(Mx) &
(\forall\, i\in 1..\ell) \;\; d_i &\doteq 
      \nabla h_i(x)
      + \left[
           \begin{array}{c}
           w_i^k + c_k \Rho(\overline M x_i - z_i) \\
           0
           \end{array}
        \right],
\end{align}
the ``$0$'' vector having dimension $n_T$ and the linear operators $\overline
M$ and $M$ respectively denoting dropping the last-stage elements from
single-scenario decision vector or the entire decision vector, as above.
\end{lemma}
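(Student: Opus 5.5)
The plan is to split $h_k$ into three pieces and differentiate each: the separable part $\sum_i \pi_i h_i(x_i)$, the linear part $\sum_i \pi_i \inner{w_i^k}{\overline M x_i}$, and the quadratic penalty
\[
\frac{c_k}{2}\sum_{i=1}^{\ell}\sum_{t,j} \pi_i \rho_{tj}\big(x_{itj}-a_{N(i,t)j}(x)\big)^2 .
\]
Here the sums over $t$ effectively run only over non-leaf stages $t\le T-1$, since $\mathcal{N}$ and $M$ involve only those stages. The first piece has gradient $(\pi_1\nabla h_1(x_1),\ldots,\pi_\ell\nabla h_\ell(x_\ell))$ by separability. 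The second has gradient with $i$-th block $\pi_i\overline M\transpose w_i^k=\pi_i(w_i^k,0)$, where the zero block has dimension $n_T$.

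For the quadratic piece I will not differentiate the averages $a_{Nj}(x)$ directly. Instead I will recognize it, via the computation at the start of the ALM subsection, as $\frac{c_k}{2}\big(\sdist{W\!}_{\mathcal{N}}(Mx)\big)^2$ with $W$ as in~\eqref{spscaling}; the componentwise averaging formula~\eqref{phprojformula} for $\sproj{W\!}_{\mathcal{N}}$ supplies exactly this identification. Lemma~\ref{lem:quadGradient} then gives the gradient
\[
c_k M\transpose W\big(Mx-\sproj{W\!}_{\mathcal{N}}(Mx)\big)=c_kM\transpose W(Mx-z),
\]
with $z$ as in~\eqref{almstochgrad}. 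Using the block-diagonal forms of $M\transpose$ and $W$, the $i$-th block of this vector is $c_k\overline M\transpose\pi_i\Rho(\overline M x_i-z_i)=\pi_i\big(c_k\Rho(\overline Mx_i-z_i),0\big)$.

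Adding the three blocks and factoring out $\pi_i$ gives $\pi_i d_i$, which is the claim. I read $\nabla h_i(x)$ in~\eqref{almstochgrad} as $\nabla h_i(x_i)$.

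The only delicate step is the identification of the penalty term with the squared $W$-distance. It depends on the cross terms hidden in $a_{Nj}(x)$ being accounted for correctly, and routing the calculation through Lemma~\ref{lem:quadGradient} avoids that bookkeeping: the idempotence and $W$-self-adjointness in Proposition~\ref{prop:wproj} are precisely what make the cross-term derivatives collapse. As a sanity check, one can verify directly that $\sum_{i\in\mathcal{D}(N)}\pi_i(x_{it(N)j}-a_{Nj})=0$, so differentiating through $a_{Nj}$ contributes nothing.
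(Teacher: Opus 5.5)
Your proposal is correct and follows essentially the same route as the paper's proof. Both rewrite the penalty as $\frac{c_k}{2}\big(\sdist{W\!}_{\mathcal{N}}(Mx)\big)^2$, apply Lemma~\ref{lem:quadGradient} to get $c_k M\transpose W(Mx-z)$, read off the $i$-th block $\pi_i\big(c_k\Rho(\overline M x_i - z_i),0\big)$ from the block-diagonal forms of $M$ and $W$, and then add the gradients of the separable and linear terms.

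Your side remarks are also correct and tidy up typos in the statement. These are restricting $t$ to non-leaf stages, reading $\nabla h_i(x)$ as $\nabla h_i(x_i)$, and the direct check that differentiating through $a_{Nj}$ contributes nothing because $\sum_{i\in\mathcal{D}(N)}\pi_i\big(x_{it(N)j}-a_{Nj}(x)\big)=0$.
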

\begin{proof}
To avoid unnecessary complexity, one may apply Lemma~\ref{lem:quadGradient} for
$V=\mathcal{N}$ and the specific choices of $M$ and $V$ starting in
Section~\ref{sec:stochProg}.  Observe that $h_k$ may be more compactly
expressed as
\begin{align}
h_k(x) &= \sum_{i=1}^{\ell} \Big( 
                             \pi_i h_i(x_i) + \pi_i \inner{w_i^k}{\overline Mx_i} 
                           \Big)
              + \frac{c_k}{2}\big( \sdist{W\!}_{\mathcal{N}}(Mx)\big)^2 \nonumber \\
       &= \sum_{i=1}^{\ell} \Big( 
                             \pi_i h_i(x_i) 
                                + \pi_i \biginner{{\overline M}\transpose w_i^k}{x_i} 
                           \Big)
              + \frac{c_k}{2}\big( \sdist{W\!}_{\mathcal{N}}(Mx)\big)^2. 
              \label{stochquad1}
\end{align}
Lemma~\ref{lem:quadGradient} asserts that the gradient of the last term in
this expression is
\begin{align*}
r \doteq \nabla \left[ \frac{c_k}{2}\Big( \sdist{W\!}_{\mathcal{N}}(Mx) \Big)^2 \right]
&= c_k M\transpose W \big((\identity - \sproj{W\!}_{\mathcal{N}})(Mx) \big) \\
&= c_k M\transpose W \big(Mx - \sproj{W\!}_{\mathcal{N}}(Mx) \big) \\
&= c_k M\transpose W (Mx - z),
\end{align*}
where $z \doteq \sproj{W}_{\mathcal{N}}(Mx)$. Applying the indexing
conventions for the other vectors to $r$ and using the structure of $M$ and
$W$ then produces
\begin{equation*}
(\forall\,i\in 1..\ell) \qquad
r_i = c_k {\overline M}\transpose\pi_i\Rho(\overline M x_i - z_i)
    = \pi_i \left[
            \begin{array}{c}
            c_k \Rho(\overline M x_i - z_i) \\
            0
            \end{array}
        \right]
\end{equation*}
Taking the derivatives of the other terms~\eqref{stochquad1}, 
it then follows that
\begin{equation*}
(\forall\,i\in 1..\ell) \quad
\nabla_{\!\!x_i} h_k(x) = \pi_i \nabla h_i(x_i) + \pi_i {\overline M}\transpose w_i^k + r_i 
                    = \pi_i d_i, \tag*{\qedhere}
\end{equation*}
\end{proof}

Using Lemma~\ref{lem:stochGradient} to help compute the gradient,
Algorithm~\ref{alg:FW-ALPH-ObjGap} presents an ALM-based alternative to
Algorithm~\ref{alg:FW-PH-ObjGap}. As with Algorithm~\ref{alg:FW-PH-ObjGap},
algorithm components that depend on the specific choice of FW variant are
marked with asterisks. The details of the projection operations in
lines~\ref{line:ALFWproj} and~\ref{line:ALFWUpdate} may be found
in~\eqref{averageop} and~\eqref{spalmupdate}.  Conditions for terminating the
outer ($k$) loop are omitted; these considerations are similar to
Algorithm~\ref{alg:FW-PH-ObjGap}.

\begin{algorithm}[t]{}
    \caption{FW-ALPH-ObjGap Algorithm Template} \label{alg:FW-ALPH-ObjGap}
    \begin{algorithmic}[1]
        \State \textbf{Initialization} Choose $w^0 \in \mathcal{N}^*$
               and a diagonal $\bar m \times \bar m$ matrix $\Rho$
        \For {$k=0,1,2, \ldots$}
           \State * Determine an FW starting point 
                     $\bar x^{k0} \in \bigtimes_{i=1}^{\ell} \conv K_i$ 
                     \label{line:ALFWstart}
           \Repeat ~\textbf{for} $l = 0, 1, 2, \ldots$
              \State $z^{kl} = \sproj{W\!}_{\mathcal{N}}\big(M \bar x^{kl}\big)$
                     \label{line:ALFWproj}
              \For {$i \in 1..\ell$}  
                     \label{line:ALscenLoopStart}
                 \State Compute the subproblem gradient 
                       $d_i^{kl} = \nabla h_i(\bar x_i^{kl})
                           + \big( w_i^k 
                                   + \Rho(c_k \overline M \bar x_i^{kl} - z_i^{kl}),0\big)$
                     \label{line:ALgradient}
                \State * Determine a MIPGap accuracy 
                    $\bar \epsilon_{ikl} < \delta_{k}$
                     \label{line:ALsetTol} 
                \LongState{Call a MILP to solver a requested absolute 
                           objective accuracy of 
                $\bar \epsilon_{ik}$ to find \\ $\text{~~~~~~}
                  \hat x_i^{kl} \in 
                     \epsilon_{ikl}\text{-}\!\argmin_{x_i\in K_i} 
                                               \inner{d_i^{kl}}{x_i}$,
                            where $\epsilon_{ik} \leq \bar \epsilon_{ik}$}
                     \label{line:ALMILPPsolve}
                \State Compute the scenario FW gap $\gamma_{ikl} =
                         \inner{d_i^{kl}}{\bar x_i^{kl} - \hat x_i^{kl}} + \epsilon_{ikl}$
              \EndFor ~(processing scenarios) \label{line:ALscenLoopEnd}
              \State Compute the overall FW gap
                     $\gamma_{kl} = \sum_{i=1}^{\ell} \pi_i \gamma_{ikl}$
              \State * Determine the next FW iterate $\bar x^{k,l+1}$ 
                     \label{line:ALnextFW}
           \Until{$\gamma_{kl} \leq \delta_{k}$} \label{line:ALbreakout} 
        \State $x^{k+1} = \bar x^{kl}$ 
               $\big[$* or $x^k = \bar x^{k,l+1}$ 
                  if $h_{k}(x^{k,l+1}) \leq h_{k}(x^{kl})\big]$ 
                     \label{line:ALnextIter}
         \State $w^{k+1} = w^k + \nu_k c_k \Rho \Big(Mx^{k+1} 
                               - \big(\sproj{W\!}_{\mathcal{N}}(M x^{k+1})\big)\Big)$ 
                \label{line:ALFWUpdate}
    \EndFor
    \end{algorithmic}
\end{algorithm}

Proposition~\ref{prop:absALMapproxObj} guarantees convergence of
Algorithm~\ref{alg:FW-ALPH-ObjGap} when $\sum_{k=0}^{\infty} \sqrt{c_k
\delta_k} < \infty$, assuming that the Frank-Wolfe method and subproblem
tolerances $\bar\epsilon_{ikl}$ are configured such that the Frank-Wolfe ($l$)
loop is eventually able to satisfy the stopping condition on
line~\ref{line:ALbreakout}.

Algorithm~\ref{alg:FW-ALPH-ObjGap} resembles Algorithm~\ref{alg:FW-PH-ObjGap},
consisting many of the same elements, but organized somewhat differently. Most
notably, the nesting of the scenario ($i$) and Frank-Wolfe ($l$) loops is
reversed: in the augmented Lagrangian approach of
Algorithm~\ref{alg:FW-ALPH-ObjGap}, one loops over scenarios within each FW
iteration, as opposed to looping over FW iterations within each scenario as in
Algorithm~\ref{alg:FW-PH-ObjGap}.  The algorithms are similar enough that a
framework implementing one of them should be relatively easy to extend to 
implementing the other.

Determining which class of algorithms will ultimately be more efficient in
practice is likely to require extensive experimentation, along with evaluation
of numerous different options for implementing the undetermined parts of each
template.  Augmented Lagrangian methods have a reputation for converging
faster than ADMM methods, but if their subproblems are computed sufficiently
inexactly, they are sometimes slower in terms of overall computational effort.
Some points worth considering are as follows:

\paragraph{Setting tolerances.}  As written, the tolerance sequence
$\{\delta_k\}$ is treated as given, but a valid implementation could set
$\delta_k$ dynamically so long as the implementation assures that
$\sum_{k=1}^{\infty} \sqrt{c_k \delta_k} < \infty$ would hold if the method were
to run indefinitely.  
One could employ a strategy similar to that suggested for
Algorithm~\ref{alg:FW-PH-ObjGap}, but accounting for possibly varying $c_k$:
in outer iteration $0$, one could simply run FW for some fixed number of
iterations $\ell_0$, set $\delta_0 = \beta \sum_{i=1}^{\ell} \pi_i
\gamma_{i0\ell_0}$ with $\beta \geq 1$ so that the $k=0$ results are
effectively accepted after $l_0$ FW iterations, and subsequently take
$\delta_k = \alpha^k \delta_0 c_0 / c_k$ for some $\alpha \in (0,1)$ (but
likely close to $1$).  Then $c_k \delta_k = c_0 \delta_0 \alpha^k$ for all
$k\geq 0$, meaning that $\{c_k \delta_k\}$ is a decreasing geometric sequence
and hence square-root summable. Of course, there are many other possibilities.
To increase the chance of satisfying the FW termination criterion in
line~\ref{line:ALbreakout}, it is likely preferable to set the tolerances
$\bar \epsilon_{ikl}$ chosen in line~\ref{line:ALsetTol} to satisfy
$\sum_{i=1}^{\ell} \pi_i \bar\epsilon_{ikl} < \delta_k$.

\paragraph{Choice of FW variant.}  Like Algorithm~\ref{alg:FW-PH-ObjGap},
Algorithm~\ref{alg:FW-ALPH-ObjGap} may be configued to use many different
variants of the Frank-Wolfe algorithm.  The preferred FW variants for the two
algorithm classes seem likely to differ:  in particular, fully corrective
methods appear far less attractive in the augmented Lagrangian setting because
each auxiliary continuous nonlinear problem needed to find the next FW iterate
would have   dimension $n = \ell \bar n$, presenting a greater challenge than
solving $\ell$ independent problems of dimension $\bar n$ as in the ADMM
approach.\footnote{In principle, one could compress these auxiliary problems
to the dimension of the number of collected vertices, but doing so tends to
lead to numerical difficulties.}  The augmented Lagrangian setting may benefit
from FW methods that are specifically designed to operate over
Cartesian-product domains; some example resources that may prove helpful in
investigating this possibility include~\cite{BRZ25,BHPW25}.

\paragraph{Coordination of FW solves.}  In the augmented Lagrangian approach,
each LMO invocation by the Frank-Wolfe method involves $\ell$ MILP solves, one
for each scenario.  In the ADMM version, a separate FW algorithm runs for each
subproblem, so some scenarios may take more FW steps than others within a
given outer iteration $k$.  Thus, the ADMM approach affords more flexibility,
but on the hand other each scenario $i$'s information about the other
scenarios remains ``frozen'' until the next outer iteration $k$.  In the
augmented Lagrangian approach, scenarios share information (through the
calculation of $z^{kl}$) as the FW method proceeds. A possible middle ground
between these alternatives could be to adapt the ALM approach to use a
``block-iterative'' FW variant such as described in~\cite{BHPW25}.  This
combination might conceivably yield an ALM-based method more flexible than
presented in Algorithm~\ref{alg:FW-ALPH-ObjGap}, with varying numbers of LMO
calls per scenario within each augmented Lagrangian iteration $k$, but with
information still dynamically shared between scenarios within the inner,
Frank-Wolfe layer of the algorithm.

\paragraph{Which FW iterate to use at the next ALM iterate.}
Line~\ref{line:ALnextIter} presents a similar situation to
line~\ref{line:PHnextIterate} of Algorithm~\ref{alg:FW-PH-ObjGap}, but in
aggregate over all scenarios instead of individually by scenario.  The
aggregate FW gap $\gamma_{kl}$ evaluates the subproblem suboptimality of $\bar
x^{kl}$; absent further information, that point is therefore the only safe
choice to use for the next augmented Lagrangian iterate.  However, if the next
FW iterate $\bar x^{k,l+1}$ is known not to have a worse objective value, then
it may also be used.  As with Algorithm~\ref{alg:FW-PH-ObjGap}, there could be
classes of situations in which it would be better to check for FW-loop
termination before computing the next FW iterate, and skip that calculation if
the test passes.  When $\bar x^{kl}$ is used, then the
$\sproj{W}_{\mathcal{N}}(Mx^{k+1})$ term on line~\ref{line:ALFWUpdate} is
identical to the $z^{kl}$ last computed on line~\ref{line:ALFWproj}, but if
$\bar x^{k,l+1}$ is used then an new projection must be performed before
executing the multiplier update.

\paragraph{Ongoing generation of lower bounds.}  The optimal value of the
augmented Lagrangian subproblem provides a lower bound on the problem optimal
value.  An inexact solution of the augmented Lagrangian, combined with a
Frank-Wolfe gap, should provide a similar bound.  Thus, the augmented
Lagrangian approach readily provides a lower approximation of the Lagrangian
relaxation bound with every Frank-Wolve iteration $\ell$.  By comparison, the
ADMM approach only provides one such bound per outer iteration $k$, and only
if the starting point of the FW method is suitably constrained, as
in~\cite{BCDELL17}.

\paragraph{Varying scalar penalty parameters.}  Augmented Lagrangian
methods theoretically allow continually varying the scalar penalty parameter
$c_k$, so long as it remains bounded away from zero.  In theory, without
burdensome additional assumptions, the ADMM requires a constant $c_k$.  By
applying a constant scaling to the change-of-variables diagonal matrix $\Rho$,
one may simply take $c_k \equiv 1$ in the ADMM.  In practice, however, some
degree of scaling and penalty adjustment is of course often used in ADMM-based
methods.

\section*{Future computational experiments}
Computational work exploring the empirical properties of the algorithms
proposed here are underway and will be included in future revisions of this
work, with added authors.

\bibliographystyle{abbrv}
\bibliography{prox,general}

@article {Hol74,
    AUTHOR = {Holloway, Charles A.},
     TITLE = {An extension of the {F}rank and {W}olfe method of feasible
              directions},
   JOURNAL = {Math. Program.},
  FJOURNAL = {Mathematical Programming},
    VOLUME = {6},
      YEAR = {1974},
     PAGES = {14--27}
}

@article {vHoh77,
    AUTHOR = {von Hohenbalken, Balder},
     TITLE = {Simplicial decomposition in nonlinear programming algorithms},
   JOURNAL = {Math. Program.},
  FJOURNAL = {Mathematical Programming},
    VOLUME = {13},
      YEAR = {1977},
    NUMBER = {1},
     PAGES = {49--68}
}

@article {FW56,
    AUTHOR = {Frank, Marguerite and Wolfe, Philip},
     TITLE = {An algorithm for quadratic programming},
   JOURNAL = {Naval Res. Logist. Quart.},
  FJOURNAL = {Naval Research Logistics Quarterly},
    VOLUME = {3},
      YEAR = {1956},
     PAGES = {95--110}
}

@inproceedings{LJJ15,
 author = {Lacoste-Julien, Simon and Jaggi, Martin},
 booktitle = {Advances in Neural Information Processing Systems},
 editor = {C. Cortes and N. Lawrence and D. Lee and M. Sugiyama and R. Garnett},
 publisher = {Curran Associates, Inc.},
 title = {On the Global Linear Convergence of {Frank-Wolfe} Optimization Variants},
 volume = {28},
 year = {2015}
}

@book{FWBook25,
author = {Braun, Gábor and Carderera, Alejandro and Combettes, Cyrille W. and Hassani, Hamed and Karbasi, Amin and Mokhtari, Aryan and Pokutta, Sebastian},
title = {Conditional Gradient Methods},
publisher = {Society for Industrial and Applied Mathematics},
year = {2025},
doi = {10.1137/1.9781611978568},
address = {Philadelphia, PA}
}

@article {mpisppy23,
    AUTHOR = {Knueven, Bernard and Mildebrath, David and Muir, Christopher
              and Siirola, John D. and Watson, Jean-Paul and Woodruff, David
              L.},
     TITLE = {A parallel hub-and-spoke system for large-scale scenario-based
              optimization under uncertainty},
   JOURNAL = {Math. Program. Comput.},
  FJOURNAL = {Mathematical Programming Computation},
    VOLUME = {15},
      YEAR = {2023},
    NUMBER = {4},
     PAGES = {591--619}
}

@article {BRZ25,
    AUTHOR = {Bomze, Immanuel and Rinaldi, Francesco and Zeffiro, Damiano},
     TITLE = {Projection free methods on product domains},
   JOURNAL = {Comput. Optim. Appl.},
  FJOURNAL = {Computational Optimization and Applications. An International
              Journal},
    VOLUME = {91},
      YEAR = {2025},
    NUMBER = {2},
     PAGES = {511--540}
}

@techreport{BHPW25,
      title={Flexible block-iterative analysis for the {Frank-Wolfe} algorithm}, 
      author={G\'{a}bor Braun and Jannis Halbey and Sebastian Pokutta and Zev Woodstock},
      year={2025},
      number={2409.06931},
      institution={arXiv} 
}

@book{grama2003introduction,
  author    = {Grama, Ananth and Gupta, Anshul and Karypis, George and Kumar, Vipin},
  title     = {Introduction to Parallel Computing},
  edition   = {2nd},
  year      = {2003},
  publisher = {Addison-Wesley},
  address   = {Harlow, UK}
}

@Article{Eck94c,
  author = 	 {Eckstein, J.},
  title = 	 {Some saddle-function splitting methods 
                  for convex programming},
  journal = 	 {Optim. Meth. Software},
  fjournal =     {Optimization Methods and Software},
  year = 	 {1994},
  volume = 	 {4},
  number = 	 {1},
  pages = 	 {75--83},
}

@article {EckBer92,
    AUTHOR = {Eckstein, Jonathan and Bertsekas, Dimitri P.},
     TITLE = {On the {D}ouglas-{R}achford splitting method and the proximal
              point algorithm for maximal monotone operators},
   JOURNAL = {Math. Program.},
  FJOURNAL = {Mathematical Programming},
    VOLUME = {55},
      YEAR = {1992},
    NUMBER = {3},
     PAGES = {293--318},
}

@InCollection{Gab83,
  author = 	 {Gabay, Daniel},
  title = 	 {Applications of the method of multipliers to variational 
                  inequalities},
  SERIES    = {Studies in Mathematics and its Applications},
  VOLUME    = {15},
  BOOKTITLE = {Augmented {L}agrangian methods: {A}pplications to the numerical solution of boundary-value problems},
  editor    = {Fortin, Michel and Glowinski, Roland},
  YEAR      = {1983},
  PUBLISHER = {North-Holland},
  ADDRESS   = {Amsterdam},
  pages =    {299--340},
}

@article {LioMer79,
    AUTHOR = {Lions, Pierre-Louis and Mercier, Bertrand},
     TITLE = {Splitting algorithms for the sum of two nonlinear operators},
   JOURNAL = {SIAM J. Numer. Anal.},
  FJOURNAL = {SIAM Journal on Numerical Analysis},
    VOLUME = {16},
      YEAR = {1979},
    NUMBER = {6},
     PAGES = {964--979},
}

@article {Min62,
    AUTHOR = {Minty, George J.},
     TITLE = {Monotone (nonlinear) operators in {H}ilbert space},
   JOURNAL = {Duke Math. J.},
  FJOURNAL = {Duke Mathematical Journal},
    VOLUME = {29},
      YEAR = {1962},
     PAGES = {341--346}
}

@article {Roc76a,
    AUTHOR = {Rockafellar, R. Tyrrell},
     TITLE = {Monotone operators and the proximal point algorithm},
   JOURNAL = {SIAM J. Control Optim.},
  FJOURNAL = {SIAM Journal on Control and Optimizaton},
    VOLUME = {14},
      YEAR = {1976},
    NUMBER = {5},
     PAGES = {877--898},
}

@article {Roc76b,
    AUTHOR = {Rockafellar, R. Tyrrell},
     TITLE = {Augmented {L}agrangians and applications of the proximal point
              algorithm in convex programming},
   JOURNAL = {Math. Oper. Res.},
  FJOURNAL = {Mathematics of Operations Research},
    VOLUME = {1},
      YEAR = {1976},
    NUMBER = {2},
     PAGES = {97--116}
}

@article {RocFenchDual,
    AUTHOR = {Rockafellar, R. Tyrrell},
     TITLE = {Duality and Stability in Extremum Problems Involving Convex Functions},
   JOURNAL = {Pacific J. Math.},
  FJOURNAL = {Pacific Journal of Mathematics},
    VOLUME = {21},
      YEAR = {1967},
     PAGES = {167--187}
}

@article {DouRac56,
    AUTHOR = {Douglas, Jr., Jim and Rachford, Jr., Henry H.},
     TITLE = {On the numerical solution of heat conduction problems in two
              and three space variables},
   JOURNAL = {Trans. Amer. Math. Soc.},
  FJOURNAL = {Transactions of the American Mathematical Society},
    VOLUME = {82},
      YEAR = {1956},
     PAGES = {421--439},
      ISSN = {0002-9947},
   MRCLASS = {65.3X},
  MRNUMBER = {MR0084194 (18,827f)},
MRREVIEWER = {C. Saltzer},
}

@preamble{
   "\def\cprime{$'$} "
}

@book{BauComBook,
    AUTHOR = {Bauschke, Heinz H. and Combettes, Patrick L.},
     TITLE = {Convex Analysis and Monotone Operator Theory in {H}ilbert
              Spaces},
 PUBLISHER = {Springer},
   ADDRESS = {New York},
   EDITION = {Second},
      YEAR = {2017}
   }

@book {Roc70book,
    AUTHOR = {Rockafellar, R. Tyrrell},
     TITLE = {Convex Analysis},
    SERIES = {Princeton Mathematical Series, No. 28},
 PUBLISHER = {Princeton University Press},
   ADDRESS = {Princeton, N.J.},
      YEAR = {1970}
}

@article {RW91,
    AUTHOR = {Rockafellar, R. Tyrrell and Wets, Roger J.-B.},
     TITLE = {Scenarios and policy aggregation in optimization under
              uncertainty},
   JOURNAL = {Math. Oper. Res.},
  FJOURNAL = {Mathematics of Operations Research},
    VOLUME = {16},
      YEAR = {1991},
    NUMBER = {1},
     PAGES = {119--147}
}

@Article{GHRWWW2016,
author="Gade, Dinakar and Hackebeil, Gabriel and Ryan, Sarah M. and Watson, Jean-Paul
      and Wets, Roger J.-B. and Woodruff, David L.",
title="Obtaining lower bounds from the progressive hedging algorithm 
       for stochastic mixed-integer programs",
fjournal="Mathematical Programming",
journal="Math. Program.",
year="2016",
volume="157",
number="1",
pages="47--67"
}

@article {BCDELL17,
    AUTHOR = {Boland, Natashia and Christiansen, Jeffrey and Dandurand,
              Brian and Eberhard, Andrew and Linderoth, Jeff and Luedtke,
              James and Oliveira, Fabricio},
     TITLE = {Combining progressive hedging with a {F}rank-{W}olfe method to
              compute {L}agrangian dual bounds in stochastic mixed-integer
              programming},
   JOURNAL = {SIAM J. Optim.},
  FJOURNAL = {SIAM Journal on Optimization},
    VOLUME = {28},
      YEAR = {2018},
    NUMBER = {2},
     PAGES = {1312--1336}
}

@article{Boyd11,
 author = {Boyd, Stephen and Parikh, Neal and Chu, Eric and Peleato, Borja and Eckstein, Jonathan},
 title = {Distributed Optimization and Statistical Learning via the Alternating Direction Method of Multipliers},
 fjourna = {Foundations and Trends in Machine Learning},
 journal = {Found. Trends Mach. Learn.},
 volume = {3},
 number = {1},
 year = {2011}
}

@book{BerConvex,
      title     = "Convex Analysis and Optimization",
      author    = "Dimitri P. Bertsekas and Angelia Nedi\'{c} and Asuman E. Ozdaglar",
      year      = 2003,
      publisher = "Athena Scientific",
      address   = "Belmont, MA, USA"
    }

@techreport{FenchNotes,
  author      = "Fenchel, Werner",
  title       = "Convex Cones, Sets, and Functions",
  institution = "Princeton University, Department of Mathematics",
  year        = "1955",
  type        = "Lecure notes",
  address     = "Princeton, NJ"
}

@phdthesis{RockThesis,
  author       = {R. Tyrrell Rockafellar}, 
  title        = {Convex Functions and Dual Extremum Problems},
  school       = {Harvard University, Deparment of Mathematics},
  year         = {1963},
  address      = {Cambridge, MA}
}

@book {Roc74,
    AUTHOR = {Rockafellar, R. Tyrrell},
     TITLE = {Conjugate Duality and Optimization},
    SERIES = {Conference Board of the Mathematical Sciences Regional
              Conference Series in Applied Mathematics, No. 16},
  XXXXNOTE = {Lectures given at the Johns Hopkins University, Baltimore,
              Md., June, 1973},
 PUBLISHER = {Society for Industrial and Applied Mathematics (SIAM)},
   ADDRESS = {Philadelphia, PA},
      YEAR = {1974}
}

@article{Rock69,
author = {Rockafellar, R. Tyrrell},
title = {Local boundedness of nonlinear, monotone operators},
volume = {16},
journal = {Michigan Math. J.},
fjournal = {Michigan Mathematical Journal},
number = {4},
pages = {397--407},
year = {1969}
}

\end{document}